\newtheorem{thm}{Theorem}[section]
\newtheorem{prop}[thm]{Proposition}
\newtheorem{lem}[thm]{Lemma}
\newtheorem{cor}[thm]{Corollary}
\renewcommand{\theclaim}{\kern-3pt}
\theoremstyle{definition}
\newtheorem{definition}[thm]{Definition}
\theoremstyle{remark}
\newtheorem{rem}[thm]{Remark}
\newtheorem{rems}[thm]{Remarks}
\newtheorem{ex}[thm]{Example}
\numberwithin{equation}{section}
\newcommand{\sC}{{\mathcal C}}
\newcommand{\sE}{{\mathcal E}}
\newcommand{\sF}{{\mathcal F}}
\newcommand{\sH}{{\mathcal H}}
\newcommand{\sI}{{\mathcal I}}
\newcommand{\sK}{{\mathcal K}}
\newcommand{\sL}{{\mathcal L}}
\newcommand{\sM}{{\mathcal M}}
\newcommand{\sO}{{\mathcal O}}
\newcommand{\sS}{{\mathcal S}}
\newcommand{\sV}{{\mathcal V}}
\newcommand{\sX}{{\mathcal X}}
\newcommand{\A}{{\mathbb A}}
\newcommand{\G}{{\mathbb G}}
\renewcommand{\P}{{\mathbb P}}
\newcommand{\mS}{{\mathbb S}}
\newcommand{\Z}{{\mathbb Z}}
\renewcommand{\L}{{\mathbb L}}
\renewcommand{\phi}{\varphi}
\newcommand{\CH}{{\rm CH}}
\newcommand{\red}{{\rm red}}
\newcommand{\codim}{{\rm codim}}
\newcommand{\Div}{{\rm Div}}
\newcommand{\Hom}{{\rm Hom}}
\newcommand{\im}{{\rm im}}
\newcommand{\Spec}{{\rm Spec \,}}
\newcommand{\supp}{{\rm supp}\,}
\newcommand{\0}{\emptyset}
\newcommand{\sHom}{{\mathcal{H}{om}}}
\newcommand{\id}{{\operatorname{id}}}
\newcommand{\Zar}{{\text{\rm Zar}}} 
\newcommand{\Ord}{{\mathbf{Ord}}}
\newcommand{\Sch}{{\operatorname{\mathbf{Sch}}}}
\newcommand{\holim}{\mathop{{\rm holim}}}
\newcommand{\op}{{\text{\rm op}}}
\newcommand{\del}{\partial}
\newcommand{\fib}{{\operatorname{\rm fib}}}
\renewcommand{\max}{{\operatorname{\rm max}}}
\newcommand{\Spt}{{\mathbf{Spt}}}
\newcommand{\Spc}{{\mathbf{Spc}}}
\newcommand{\Sm}{{\mathbf{Sm}}}
\newcommand{\hocolim}{\mathop{{\rm hocolim}}}
\newcommand{\colim}{\operatornamewithlimits{\varinjlim}}
\newcommand{\GL}{{\operatorname{\rm GL}}}
\newcommand{\SH}{{\operatorname{\sS\sH}}}
\newcommand{\eff}{{\mathop{eff}}}
\newcommand{\ds}{{/\kern-3pt/}}
\newcommand{\Th}{{\mathop{\rm{Th}}}}
\newcommand{\SP}{{\mathbf{SP}}}
\newcommand{\MGL}{{\operatorname{MGL}}}
\newcommand{\lci}{\text{l.\,c.\,i.\,}}
\newcommand{\Tor}{{\operatorname{Tor}}}
\newcommand{\GrAb}{\mathbf{GrAb}}
\newcommand{\ch}{\rm{ch}}
\begin{document}
\title{Connective Algebraic $K$-theory}
\author{Shouxin Dai}
\address{
}
\address{
Mathematics Institute\\
Chinese Academy of Sciences\\
55 Zhongguancun East Road\\
100190 Beijing\\
China}
\email{shouxindai@163.com}
\author{Marc Levine}
\address{
Universit\"at Duisburg-Essen\\
Fakult\"at Mathematik\\
Thea-Leymann-Stra{\ss}e 9\\
45127 Essen\\
Germany}
\email{marc.levine@uni-due.de}

\keywords{Algebraic cobordism}

\subjclass{Primary 14C25, 19E15; Secondary 19E08 14F42, 55P42}
 
 \thanks{The 
author M.L.  thanks the Humboldt Foundation for support through the Humboldt Professorship. The author S.D. warmly thanks continuous support  and encouragement of M. Levine throughout the years and expresses his gratitude to X. Sun for support in many ways. He is indebted to A. Merkurjev for inspirational communications.}

\begin{abstract}
We examine the theory of connective algebraic $K$-theory, $\sC\sK$, defined by taking the -1 connective cover of algebraic $K$-theory with respect to Voevodsky's slice tower in the motivic stable homotopy category. We extend $\sC\sK$ to a bi-graded oriented duality theory $(\sC\sK'_{*,*}, \sC\sK^{*,*})$ in case the base scheme is the spectrum of a field $k$ of characteristic zero. The homology theory $\sC\sK'_{*,*}$ may be viewed as connective algebraic $G$-theory. We identify $\sC\sK'_{2n,n}(X)$  for $X$ a finite type $k$-scheme with the image of $K_0(\sM_{(n)}(X))$ in $K_0(\sM_{(n+1)}(X))$,  where $\sM_{(n)}(X)$ is the abelian category of coherent sheaves on $X$ with support in dimension at most $n$; this agrees with the (2n,n) part of the theory of connective algebraic $K$-theory defined by Cai. We also show that the classifying map from algebraic cobordism identifies $\sC\sK'_{2*,*}$ with the universal oriented Borel-Morel homology theory $\Omega_*^{CK}:=\Omega_*\otimes_\L\Z[\beta]$ having formal group law $u+v-\beta uv$ with coefficient ring $\Z[\beta]$. As an application, we show that every pure dimension $d$ finite type $k$ scheme has a well-defined fundamental class $[X] _{CK}$ in $\Omega_d^{CK}(X)$, and this fundamental class is functorial with respect to pull-back for lci morphisms. Furthermore, the fundamental class $[X] _{CK}$ maps to the usual fundamental classes $[X] _{Chow}$, reap. $[X] _{K}$ under the natural maps
\[
\Omega_*^{CK}\to K_0[\beta,\beta^{-1}];\quad \Omega_*^{CK}\to\CH_*
\]
given by inverting $\beta$, resp. moding out by $\beta$.
\end{abstract}

\maketitle

\tableofcontents

\section*{Introduction}
In topology, the theory of {\em connective $K$-theory} is represented by the -1 connected cover $ku$ of the topological $K$-theory spectrum $KU$. In the setting of motivic stable homotopy theory over a base-scheme $S$, Voevodsky  \cite{Voevodsky} has constructed the algebraic analog of $KU$, namely the algebraic $K$-theory spectrum $\sK_S$, which represents Quillen's algebraic $K$-theory on the category of smooth $S$-schemes, assuming that $S$ itself is a regular scheme (see also \cite{PaninPimenovRoendigs2}), in that there are natural isomorphisms $\sK_S^{a,b}(X)\cong K_{2b-a}(X)$ for $X$ a smooth finite type $S$-scheme.

There are a number of possible notions of connectivity in the motivic stable homotopy category over $S$, $\SH(S)$, but one that has proved quite useful is given by using $T$-connectivity ($T=\A^1/\A^1\setminus\{0\}$), in the sense of the tower of localizing  subcategories
\[
\cdots \subset\Sigma^{n+1}_T\SH^\eff(S)\subset \Sigma^n_T\SH^\eff(S)\subset\ldots\subset \SH(S),
\]
where $\SH^\eff(S)$ is the localizing subcategory of $\SH(S)$ generated by the $T$-sus\-pen\-sion spectra of smooth $S$-schemes. The associated truncation functors give rise to Voevodsky's {\em slice tower}
\[
\dots \to f_{n+1}\sE\to f_n\sE\to\dots\to \sE
\]
for each $\sE\in \SH(S)$, with $f_n\sE\to \sE$ the universal morphism from an object in $\Sigma^n_T\SH^\eff(S)$ to $\sE$; one could call this the ``$T$-$(n-1)$-connected cover" of $\sE$. It is thus natural to define connective algebraic $K$-theory as the bi-graded cohomology theory represented by $f_0\sK_S$. 

In this paper we study connective algebraic $K$-theory, $\sC\sK^{*,*}$ and its associated oriented homology theory,  $\sC\sK_{*,*}'$, this latter for $S=\Spec k$, $k$ a characteristic zero field (see however remark~\ref{rem:ConnGThy}). The  oriented homology theory $\sC\sK_{*,*}'$ is the connective analog of $G$-theory, that is, the $K$-theory of coherent sheaves rather than vector bundles. The canonical map $\sC\sK^{*,*}\to \sK^{*,*}$ induces the map $\sC\sK_{*,*}'\to G_{*,*}$ (where    $G_{a,b}(X):=G_{a-2b}(X)$) and we elucidate here how the connective versions refine the non-connective ones.

Cai \cite{Cai} has defined a bi-graded oriented Borel-Moore homology theory, which he calls connective algebraic $K$-theory, by using the Quillen-Gersten spectral sequence. Concretely, he defines the group $CK^{a,b}(X)$ as the image of $K_{2b-a}(\sM^{(a)}(X))$ in $K_{2b-a}(\sM^{(a-1)}(X))$, where $\sM^{(a)}(X)$ is the category of coherent sheaves on $X$ supported in codimension at least $a$ (this is for $X$ smooth or at least equi-dimensional over a field $k$; in the general case, one indexes using dimension giving the theory $CK'_{*,*}$). Cai verifies the properties of an oriented Borel-Moore homology theory for  $CK^{*,*}$. It turns out this Cai's theory does not in general agree with the one given by motivic homotopy theory, but it does agree in the portion corresponding to $K_0$ or $G_0$ (see theorem~\ref{thm:ConnK0} and remark~\ref{rem:Cai}).

Besides comparison results of this type and other structural properties of connective $K$-theory, our main result is a comparison with algebraic cobordism. There is a canonical natural transformation
\[
\Omega_*(X)\otimes_\L\Z[\beta]\to \sC\sK_{2*,*}'(X)
\]
which, in the case of characteristic zero, is an isomorphism for all quasi-projective $k$-schemes $X$ (see theorem~\ref{thm:ClassIso}). This allows us to use the natural fundamental classes in $G$-theory, namely, the class of the structure sheaf, to define a fundamental class $[X]\in
\Omega_d(X)\otimes_\L\Z[\beta]$ for $X$ of pure dimension $d$ over $k$, which is functorial with respect to pull-back by \lci morphisms. Note that such fundamental classes in $\Omega_d(X)$ do not exist in general \cite{LevineFundClass}.

We let $\Spc$ and $\Spc_\bullet$ denote the categories of simplicial sets and pointed simplicial sets, respectively, with homotopy categories $\sH$ and $\sH_\bullet$. $\Spt$ the category of spectra (for the usual suspension operator $\Sigma:=(-)\wedge S^1$) and $\SH$ the stable homotopy category.

For a scheme $S$, $\Sch/S$ will denote the category of quasi-projective schemes over $S$, $\Sm/S$ the full subcategory of smooth quasi-projective schemes over $S$.  $\Spc(S)$, and $\Spc_\bullet(S)$ the categories of pre sheaves on $\Sm/S$ with values in $\Spc$, $\Spc_\bullet$, $\Spt_{S^1}(S)$ the category of $S^1$-spectra over $S$, this being the category of presheaves of spectra on $\Sm/S$. We let $\Spt_T(S)$ denote the category of $T$-spectra in $\Spc_\bullet(S)$, with $T:=\A^1/\A^1\setminus\{0\}$, and $\Spt^\Sigma_T(S)$ the category of symmetric $T$-spectra. We have as well the category of $S^1-\G_m$ bi-spectra objects in $\Spc_\bullet(S)$, denoted $\Spt_{s,g}(S)$.

The categories $\Spc(S)$,  $\Spc_\bullet(S)$, $\Spt_{S^1}(S)$, $\Spt_T(S)$, $\Spt_{s,g}(S)$ and $\Spt^\Sigma_T(S)$ all have so-called {\em motivic} model structures (the original source for the unstable theory is  \cite{MorelVoev}, see also \cite{Motivic, GoerssJardine} for a compact description. For the stable theory, we refer the reader to \cite{Jardine2}), with homotopy categories denoted $\sH(S)$, $\sH_\bullet(S)$, $\SH_{S^1}(S)$, $\SH(S)$, $\SH_{s,g}(S)$ and $\SH^\Sigma(S)$, respectively. The categories $\SH(S)$, $\SH_{s,g}(S)$ and $\SH^\Sigma(S)$ are equivalent triangulated tensor categories; the tensor structure is induced by a  symmetric monodical structure on  $\Spt^\Sigma_T(S)$. There is also a symmetric spectra version of $\Spt_{S^1}(S)$ making $\SH_{S^1}(S)$ a tensor triangulated category.

We denote by $\G_m$ the pointed $S$-scheme $(\A^1_S-0_S,1_S)$. $\P^1_*$ will denote  the pointed $S$-scheme $\P^1_S$, with base-point $1_S$.  

$\Ord$ is the category of finite ordered sets, we let $[n]\in\Ord$ denote the set $\{0,\ldots, n\}$ with the standard ordering. We let $\L$ denote the Lazard ring, that is, the coefficient ring of the universal rank one commutative formal group law $F_\L\in \L[[u,v]]$. We let $\L^*$ denote $\L$ with the grading determined by $\deg a_{ij}=1-i-j$ if $F_\L(u,v)=u+v+\sum_{i,j\ge1}a_{ij}u^iv^j$ and let $\L_*$ denote $\L$ with the opposite grading $\L_n:=\L^{-n}$.

\section{$K$-theory and connective $K$-theory}
We work in the motivic stable homotopy category $\SH(S)$ over a {\em regular} base-scheme $S$; we will rather quickly pass to the case $S=\Spec k$ for a field $k$.

We have the Tate-Postnikov tower
\[
\ldots\to f_{n+1}\to f_n\to\ldots\to \id
\]
of endofunctors of $\SH(S)$ associated to the inclusions of full localizing subcategories
\[
\cdots\subset  \Sigma_T^{n+1}\SH^\eff(S)\subset \Sigma_T^{n}\SH^\eff(S)\subset\cdots\subset \SH(S).
\]
with $\Sigma_T^{n}\SH^\eff(S)$ the full localizing subcategory of $\SH(S)$ generated by the objects $\Sigma^m_TX_+$ for $X\in \Sm/S$, $m\ge n$. Letting $i_n:\Sigma_T^{n}\SH^\eff(S)\to \SH(S)$ be the inclusion, $i_n$ admits the right adjoint $r_n:\SH(S)\to \Sigma_T^{n}\SH^\eff(S)$ and $f_n$ is by definition the composition $i_n\circ r_n$; the natural transformation $f_n\to \id$ is the co-unit of the adjunction. There are functors $s_n$, $n\in \Z$ which give a distinguished triangle $f_{n+1}\to f_n\to s_n\to f_{n+1}[1]$; one can show that these distinguished triangles are unique up to unique natural isomorphism. In addition, we have the canonical isomorphism 
\[
f_{n+1}\circ\Sigma_T\cong \Sigma_T\circ f_n.
\]
For details, we refer the reader to \cite{LevineHC, VoevSlice1, VoevSlice1bis}.

\begin{rem} \label{rem:Ring} By \cite[proposition 5.3]{GRSO}, if $\sE$ is given a lifting to an $E_\infty$ ring object in $\Spt^\Sigma_T(S)$, then $f_0\sE$ has a canonical lifting to an $E_\infty$ ring object in $\Spt^\Sigma_T(S)$. In particular,  $f_0\sE$ is itself a commutative unital monoid in $\SH(S)$ and thus defines a bi-graded cohomology theory on $\Sm/S$ in the sense of Panin \cite[definition 2.0.1]{Panin2}.
\end{rem}

Following \cite[lemma 2.5, theorem 4.1]{RSO} we have a commutative unital monoid object $\sK_S$ of $\Spt^\Sigma_T(S)$ representing algebraic $K$-theory in  $\SH(S)$,  in the sense that, for each $X\in\Sm/S$ and open subscheme $U$, there is an isomorphism $\sHom(\Sigma_T^\infty X/U, \sK_S)\cong K(X\text{ on }X\setminus U)$ in $\SH$, natural in pairs $(X,U)$. Here $\sHom(-,-)$ is the $\Spt$-valued enriched Hom and $K(X\text{ on }X\setminus U)$ is the algebraic $K$-theory spectrum of the category of perfect complexes on $X$ with support on $X\setminus U$. Here we use our standing assumption that $S$ is regular; for general $S$, $\sK_S$ represents  homotopy invariant $K$-theory $KH$. 

To fix the conventions, the image of $\sK_S$ in $\SH(S)$ under the forgetful functor is isomorphic in $\SH(S)$ to a $T$-spectrum of the form $(\bar{\sK}_S, \bar{\sK}_S, \ldots)$, with $\bar{\sK}_S$ a commutative unital monoid in $\sH_\bullet(S)$ representing $K$-theory on $\Sm/S$. The structure map $\bar{\sK}_S\wedge T\to \bar{\sK}_S$ is isomorphic in $\sH_\bullet(S)$ to the composition
\[
\bar{\sK}_S\wedge T\xrightarrow{\id\wedge\sigma}\bar{\sK}_S\wedge \P^1\xrightarrow{\id\wedge\gamma} \bar{\sK}_S\wedge \bar{\sK}_S\xrightarrow{\mu}\bar{\sK}_S
\]
where $\mu$ is the multiplication , $\rho:T\to \P^1_*$ is the standard isomorphism and $\gamma:\P^1_*\to \bar{\sK}_S$ represents the class $[\sO]-[\sO(-1)]$ in $K_0(\P^1_*)$. This agrees with the convention in \cite{RSO}, and is the negative of the convention used in \cite{PaninPimenovRoendigs2}. We will often write $\sK_S$ for the image of $\sK_S$ in $\SH(S)$, with the context determining the meaning. 

\begin{rem}\label{rem:Conventions} 1. The fact that the shift operator leaves the $T$-spectrum $(\bar{\sK}_S, \bar{\sK}_S, \ldots)$ unchanged gives us the {\em Bott periodicity isomorphism} $\Sigma_T\sK_S\cong \sK_S$ in $\SH(S)$. We note that the map $\gamma:\P^1_*\to \bar{\sK}_S$ thus gives rise to an element $[\gamma]\in \sK^{2,1}(\P^1_*)$ which corresponds to the unit $1\in \sK^{0,0}(S)$ under the suspension isomorphism $\sK^{2,1}(\P^1_*)\cong\sK^{0,0}(S)$.\\
\\
2. Let $t_\sK\in \sK_S^{1,1}(\G_m)$ be the element corresponding to $1\in \sK^{0,0}(S)$ under the suspension isomorphism $\sK^{1,1}(\G_m)\cong\sK^{0,0}(S)$. Thus $\Sigma_{S^1}t_\sK=[\gamma]$. Under the isomorphisms $K_1(\G_m)\cong \sK^{1,1}(\G_m)$, $K_0(\P^1_*)\cong \sK^{2,1}(\P^1_*)$, the isomorphism $\sK^{1,1}(\G_m)\cong \sK^{2,1}(\Sigma_{S^1}\G_m)\cong  \sK^{2,1}(\P^1_*)$ is identified with the boundary map in the Mayer-Vietoris sequence
\[
\cdots\to K_1(\A^1)\oplus K_1(\A^1)\to K_1(\A^1\setminus \{0\})\xrightarrow{\partial}K_0(\P^1)\to\cdots
\]
Let $[t]\in K_1(\A^1\setminus \{0\})$ be the image of the canonical coordinate $t$ on $\A^1\setminus \{0\}=\Spec \sO_S[t,t^{-1}]$. Then $\partial[t]=1-[O(-1)]$ (at least up to a universal sign, see \cite[\S 7, lemma 5.16]{QuillenKThyI}), and hence $t_\sK=[t^{\pm1}]$.  
\end{rem}

\begin{definition} Let $\sC\sK_S$ be the object $f_0\sK_S$ of  $\SH(S)$, with canonical map $\rho:\sC\sK_S\to \sK_S$. {\em Connective algebraic $K$-theory} over $S$ is the bi-graded cohomology theory on $\Sm/S$ represented by  $\sC\sK_S$. We call $\sC\sK_S$ the connective algebraic $K$-theory $T$-spectrum (over $S$). 
\end{definition}

\begin{rem} The fact that $\sC\sK_S$ lifts to an $E_\infty$-ring object in $\Spt^\Sigma_T(S)$ is noted in \cite[\S 6.2]{GRSO}.
\end{rem}

\begin{lem}\label{lem:0slice} Suppose $s_0\sK_S$ satisfies $(s_0\sK_S)^{a,0}(X)=0$ for $a\neq0$ and for all $X\in\Sm/S$.  For $X\in\Sm/S$, consider the natural map $\rho^{a,1}(X):\sC\sK_S^{a,1}(X)\to \sK_S^{a,1}(X)$.  Then $\rho^{2,1}$ is injective, $\rho^{3,1}$ is surjective and $\rho^{a,1}$ is an isomorphism for $a\neq 2,3$.
\end{lem}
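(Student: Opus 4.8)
The plan is to reduce the whole statement to a single vanishing assertion for the cofiber of $\rho$, and then to read that cofiber off the slice tower together with Bott periodicity. Set $C:=\cofib(\rho\colon\sC\sK_S\to\sK_S)$, so that for each $X\in\Sm/S$ there is a long exact sequence
\[
\cdots\to C^{a-1,1}(X)\to\sC\sK_S^{a,1}(X)\xrightarrow{\rho^{a,1}(X)}\sK_S^{a,1}(X)\to C^{a,1}(X)\to\cdots .
\]
I claim it suffices to prove that $C^{a,1}(X)=0$ for every $a\neq 2$. Granting this, the sequence shows that $\rho^{a,1}(X)$ is an isomorphism as soon as its two neighbours $C^{a-1,1}(X)$ and $C^{a,1}(X)$ vanish, i.e. for all $a\neq 2,3$; that $\rho^{2,1}(X)$ is injective, because $C^{1,1}(X)=0$; and that $\rho^{3,1}(X)$ is surjective, because $C^{3,1}(X)=0$.

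To identify $C$ I would first use Bott periodicity $\Sigma_T\sK_S\cong\sK_S$ together with the canonical isomorphism $f_{n+1}\circ\Sigma_T\cong\Sigma_T\circ f_n$ (and the induced $s_{n+1}\circ\Sigma_T\cong\Sigma_T\circ s_n$) to obtain $f_n\sK_S\cong\Sigma_T^n\sC\sK_S$ and $s_n\sK_S\cong\Sigma_T^n s_0\sK_S$ for every $n\in\Z$. Exhaustiveness of the slice tower (see \cite{LevineHC, VoevSlice1}) gives $\sK_S\cong\hocolim_{n\ge0}f_{-n}\sK_S$ with $\rho$ the canonical map out of the $n=0$ term; since $\cofib$ commutes with this sequential homotopy colimit, $C\cong\hocolim_{n\ge0}g^n$ where $g^n:=\cofib(f_0\sK_S\to f_{-n}\sK_S)$, $g^0=0$, and the octahedral axiom applied to $f_0\sK_S\to f_{-(n-1)}\sK_S\to f_{-n}\sK_S$ supplies distinguished triangles $g^{n-1}\to g^n\to s_{-n}\sK_S$. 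Because $\Sigma^\infty_T X_+$ is compact, $C^{a,1}(X)=\colim_n (g^n)^{a,1}(X)$, so it remains to prove that $(s_{-j}\sK_S)^{a,1}(X)=0$ for all $j\ge 1$ and all $a\neq 2$, and then to run the evident induction on $n$ through the triangles $g^{n-1}\to g^n\to s_{-n}\sK_S$: for $a\neq 2,3$ both boundary slice terms vanish, giving $(g^{n-1})^{a,1}(X)\cong(g^n)^{a,1}(X)$; for $a=3$ one has $(s_{-n}\sK_S)^{3,1}(X)=0$, so $(g^{n-1})^{3,1}(X)\twoheadrightarrow(g^n)^{3,1}(X)$, and $g^0=0$ kills it. This yields $C^{a,1}(X)=0$ for $a\neq 2$.

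For the slice groups themselves I would use $(s_{-j}\sK_S)^{a,1}(X)\cong(\Sigma_T^{-j}s_0\sK_S)^{a,1}(X)\cong(s_0\sK_S)^{a-2j,\,1-j}(X)$. When $j=1$ this is $(s_0\sK_S)^{a-2,0}(X)$, which vanishes for every $a\neq 2$ exactly by the hypothesis of the lemma (and $(s_0\sK_S)^{1,0}(X)=0$ in particular gives $(s_{-1}\sK_S)^{3,1}(X)=0$, as used above). When $j\ge 2$ the weight $1-j$ is negative, and here I would invoke that the $0$-slice $s_0\sK_S$ is an effective $\HZ$-module — over $\Spec k$ it is $\HZ$ itself — hence an effective motive, whose cohomology vanishes in negative weights; thus $(s_0\sK_S)^{a-2j,1-j}(X)=0$ for all $a$. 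This completes the vanishing input, and with it the lemma, by the reductions of the first two paragraphs.

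The only point that is not pure diagram-chasing is the control of the infinitely many negative slices $s_{-j}\sK_S$ with $j\ge 2$: they contribute in weights $1-j\le -1$, about which the lemma's weight-$0$ hypothesis says nothing, so one genuinely needs the structural input that $s_0\sK_S$ is an effective motive (equivalently, that $s_0\sK_S\cong\HZ$ in the case of a field). Everything else — the periodicity isomorphisms for $f_n$ and $s_n$, exhaustiveness of the slice tower, compactness of $\Sigma^\infty_T X_+$, the octahedral triangles, and the long exact sequences — is bookkeeping.
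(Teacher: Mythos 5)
Your overall architecture is sound: reducing the lemma to $C^{a,1}(X)=0$ for $a\neq 2$, writing $C\cong\hocolim_n g^n$ via exhaustiveness of the tower and compactness of $\Sigma^\infty_TX_+$, and inducting through the triangles $g^{n-1}\to g^n\to s_{-n}\sK_S$ with the bookkeeping $(s_{-j}\sK_S)^{a,1}(X)\cong(s_0\sK_S)^{a-2j,1-j}(X)$ are all correct. The gap is in the justification of the vanishing for $j\ge 2$. The principle you invoke — that an effective $H\Z$-module (``effective motive'') has vanishing cohomology in negative weights — is false: $\Sigma_TH\Z$ is an effective $H\Z$-module, yet $(\Sigma_TH\Z)^{-2,-1}(S)\cong H\Z^{0,0}(S)\neq 0$. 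What is true is that $H\Z$-cohomology itself vanishes in negative weights, but to use that you must appeal to $s_0\sK_S\cong H\Z$, which is genuine extra input (known over perfect fields, as recalled in remark~\ref{rem:SmoothCompatibiilty}, but not among the hypotheses of the lemma, which is stated over an arbitrary regular base $S$ assuming only the weight-zero vanishing of $s_0\sK_S$). So as written the argument does not prove the lemma under its stated hypotheses.

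The fix is purely formal: the relevant property of $s_0\sK_S$ is not effectivity but the orthogonality built into the definition of a slice. Since both $f_1\sK_S\to\sK_S$ and $f_0\sK_S\to\sK_S$ induce isomorphisms on $\Hom_{\SH(S)}(K,-)$ for every $K\in\Sigma_T\SH^\eff(S)$ (and for all simplicial shifts of $K$), the triangle $f_1\sK_S\to f_0\sK_S\to s_0\sK_S$ gives $\Hom_{\SH(S)}(K,\Sigma^p_{S^1}s_0\sK_S)=0$ for all such $K$ and all $p$. Now $(s_0\sK_S)^{a-2j,1-j}(X)\cong\Hom_{\SH(S)}(\Sigma_T^{j-1}\Sigma^\infty_TX_+,\Sigma^{a-2}_{S^1}s_0\sK_S)$, and for $j\ge 2$ the source lies in $\Sigma_T^{j-1}\SH^\eff(S)\subset\Sigma_T\SH^\eff(S)$, so the group vanishes with no identification of $s_0\sK_S$ needed. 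With that substitution your proof is complete, though it is a longer route than the paper's: there one first uses Bott periodicity to write $\sC\sK^{a,1}(X)=(f_1\sK_S)^{a-2,0}(X)$ and $\sK^{a,1}(X)=(f_0\sK_S)^{a-2,0}(X)$ (the latter because $\Sigma^\infty_TX_+$ is effective), so that only the single triangle $f_1\sK_S\to f_0\sK_S\to s_0\sK_S$ in weight zero enters and the negative slices never appear.
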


\begin{proof} By Bott periodicity, we have $\Sigma_Tf_0\sK_S=f_1\Sigma_T\sK_S=f_1\sK_S$. Thus $\sC\sK^{a,1}(X)=(f_1\sK)^{a-2,0}(X)$. Since $\Sigma^\infty_TX_+$ is in $\SH^\eff(S)$, we have $\sK^{*,0}(X)=(f_0\sK_S)^{*,0}(X)$,  and we thus  have a long exact sequence
\[ 
\ldots\to(s_0\sK_S)^{a-3,0}(X)\to \sC\sK^{a,1}(X)\xrightarrow{\rho} \sK^{a,1}(X)\to (s_0\sK_S)^{a-2,0}(X)\to\ldots .
\]
Our assumption $(s_0\sK_S)^{a,0}(X)=0$ for $a\neq0$ yields the desired result.
\end{proof}

\begin{rem}\label{rem:SmoothCompatibiilty} For $S=\Spec k$, $k$ a perfect field, we know by \cite[theorem 6.6]{VoevSlice2} \cite[theorem 10.5.1]{LevineHC} that $s_0\sK_S$ is isomorphic in $\SH(k)$ to the $T$-spectrum representing motivic cohomology, $H\Z$. We also know that $H\Z^{a,0}(X)=H^a(X_\Zar,\Z)$ for $X\in\Sm/k$, hence the assumption in lemma~\ref{lem:0slice} is satisfied. In addition, the map $\sK_k^{2,1}(X)=K_0(X)\to H\Z^{0,0}(X)$ is just the rank homomorphism, hence surjective. As $\sK_S^{a,1}(X)=K_{2-a}(X)=0$ for $a>2$, we see that $\sC\sK^{a,1}(X)=0$ for $a>2$ as well. 

The functors $f_n$ and $s_n$ are compatible with pull-back by smooth morphisms \cite[theorem 2.12, remark 2.13]{PelaezFunctSlice}, and for $f:T\to S$ smooth, we have $f^*\sK_S\cong \sK_T$. Thus $(s_0\sK_S)^{a,0}(X)\cong H^a(X_\Zar,\Z)$ for $X\in\Sm/S$ if $S$ is smooth over a perfect field. Thus, for $S$ smooth over a perfect field, the map $\sC\sK_S^{a,1}(X)\to \sK_S^{a,1}(X)=K_{2-a}(X)$ is an isomorphism for $a<2$, an injection for $a=2$ and $\sC\sK_S^{a,1}(X)=0$ for $a>2$. 
\end{rem}

\begin{rem}\label{rem:Orient} For $n\ge1$, let $\P^n_*$ denote the $S$-scheme $\P^n_S$, pointed by $(1:1:0\ldots:0)$ and let $\P^\infty$ denote the  colimit (in $\Spc_\bullet(S)$) of the $\P^n_*$ under the linear embeddings $(x_0:\ldots:x_n)\mapsto (x_0:\ldots:x_n:0)$. Recall from \cite[definition 1.2]{PaninPimenovRoendigs} that an {\em orientation} on a commutative ring spectrum $\sE$ in $\SH(S)$ is given by an element $c\in \sE^{2,1}(\P^\infty)$ such that the restriction $c_{|\P^1}\in \sE^{2,1}(\P^1_*)=\sE^{2,1}(T)$ is $\Sigma_T(1)$ (we use the opposite sign convention from {\it loc. cit.}).

Consider the  sequence of elements $1-[\sO_{\P^n}(-1)]\in  \sK_S^{2,1}(\P_*^n)=\tilde{K}_0(\P_S^n)$, which are clearly compatible with respect to restriction via the hyperplane embeddings $\P^n\to \P^{n+1}$. As the sequence $n\mapsto \sK_S^{1,1}(\P_S^n)=K_1(\P_S^n)$ satisfies the Mittag-Leffler condition (all the restriction maps are surjective), this defines  a unique element  $c_\sK\in \sK_S^{2,1}(\P^\infty)$, giving the standard orientation for  $\sK_S$; the fact that $c_\sK$ is an orientation follows from remark~\ref{rem:Conventions}.  
\end{rem}

\begin{rem}\label{rem:OrientUniv} The algebraic cobordism spectrum $\MGL$ has been studied in \cite{PaninPimenovRoendigs}. $\MGL$ is the $T$-spectrum $(\MGL_0, \MGL_1,\ldots)$ with $\MGL_n$ the Thom space $Th(E_n)$, with $E_n\to B\GL_n$ the universal $n$-plane bundle. $\MGL_S$ is a commutative ring spectrum object (i.e. a commutative unital monoid) in $\SH(S)$ with an orientation $c_\MGL\in\MGL^{2,1}(\P^\infty)$ given  by the diagram
\[
\xymatrix{E_1\ar[d]\ar[r]&Th(E_1)=\MGL_1\\
\P^\infty}
\]
noting that $E_1\to B\GL_1=\P^\infty$ is an isomorphism in $\sH(S)$. The main result, theorem 1.1, of \cite{PaninPimenovRoendigs} is the universality of $(\MGL, c_\MGL)$ in case $S=\Spec k$: For $\sE$ a commutative unital monoid in $\SH(k)$, sending a unital monoid morphism $\phi:\MGL\to \sE$ to $\phi(c_\MGL)$ gives a bijection of the set of unital monoid maps $\phi$ with the set of orientations $c_\sE\in \sE^{2,1}(\P^\infty)$.
\end{rem}

\begin{lem} \label{lem:orient} Suppose that $S$ satisfies the hypothesis of lemma~\ref{lem:0slice}. Suppose in addition that the map $\sK_S^{0,0}(\P^n_S)\to (s_0\sK_S)^{0,0}(\P^n_S)$ is the rank homomorphism $K_0(\P^n_S)\to H^0(S,\Z)$ (up to sign).  Then there is a unique element $c_{\sC\sK}\in \sC\sK^{2,1}(\P^\infty)$ mapping to $c_\sK$ under the canonical map $\sC\sK^{2,1}(\P^\infty)\to \sK^{2,1}(\P^\infty)$. Furthermore, $c_{\sC\sK}\in \sC\sK^{2,1}(\P^\infty)$ defines an orientation for $\sC\sK$.
\end{lem}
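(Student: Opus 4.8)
The plan is to lift $c_\sK$ one projective space at a time and then assemble the lifts over $\P^\infty$, using lemma~\ref{lem:0slice} to control the comparison map $\rho$ in bidegree $(*,1)$ at every stage. First I would fix $n$ and specialize the long exact sequence of lemma~\ref{lem:0slice} to $X=\P^n_S$, $a=2$:
\[
\sC\sK^{2,1}(\P^n_S)\xrightarrow{\ \rho\ }\sK^{2,1}(\P^n_S)\longrightarrow(s_0\sK_S)^{0,0}(\P^n_S).
\]
By remark~\ref{rem:Orient}, $c_\sK$ restricts on $\P^n_S$ to $1-[\sO_{\P^n}(-1)]\in\tilde K_0(\P^n_S)=\sK^{2,1}(\P^n_S)$, which has rank $0$; the added hypothesis that the last map above is, up to sign, the rank homomorphism then forces $c_\sK|_{\P^n_S}$ to map to $0$, so by exactness it lifts to some $c_n\in\sC\sK^{2,1}(\P^n_S)$, and the lift is \emph{unique} since $\rho^{2,1}(\P^n_S)$ is injective. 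Uniqueness also yields compatibility: under the hyperplane embedding $\P^n_S\hookrightarrow\P^{n+1}_S$ the class $c_{n+1}|_{\P^n_S}$ is again a lift of $c_\sK|_{\P^n_S}$, hence equals $c_n$.

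Next I would pass to $\P^\infty=\colim_n\P^n_*$ via the Milnor exact sequence
\[
0\to {\varprojlim_n}^1\,\sC\sK^{1,1}(\P^n_S)\to\sC\sK^{2,1}(\P^\infty)\to\varprojlim_n\sC\sK^{2,1}(\P^n_S)\to 0.
\]
Lemma~\ref{lem:0slice} (case $a=1$) identifies $\sC\sK^{1,1}(\P^n_S)$ with $\sK^{1,1}(\P^n_S)=K_1(\P^n_S)$, a tower with surjective transition maps by the projective bundle formula (recalled in remark~\ref{rem:Orient}); hence the ${\varprojlim}^1$ term vanishes and $\sC\sK^{2,1}(\P^\infty)\xrightarrow{\sim}\varprojlim_n\sC\sK^{2,1}(\P^n_S)$, so the compatible family $(c_n)_n$ defines an element $c_{\sC\sK}\in\sC\sK^{2,1}(\P^\infty)$. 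The analogous Milnor sequence for $\sK$ (whose ${\varprojlim}^1$ vanishes for the same reason) gives $\sK^{2,1}(\P^\infty)\cong\varprojlim_n\sK^{2,1}(\P^n_S)$, under which the canonical map $\sC\sK^{2,1}(\P^\infty)\to\sK^{2,1}(\P^\infty)$ becomes $\varprojlim_n\rho^{2,1}(\P^n_S)$; this is injective, being an inverse limit of injections, so $c_{\sC\sK}$ is the \emph{unique} lift of $c_\sK$, and since $c_n\mapsto c_\sK|_{\P^n_S}$ for every $n$ we indeed get $c_{\sC\sK}\mapsto c_\sK$.

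To see that $c_{\sC\sK}$ is an orientation I would check $c_{\sC\sK}|_{\P^1_*}=\Sigma_T(1)$ in $\sC\sK^{2,1}(\P^1_*)$. Restriction along $\P^1_*\hookrightarrow\P^\infty$ is projection onto the $n=1$ term, so $c_{\sC\sK}|_{\P^1_*}=c_1$. Since $\rho\colon\sC\sK_S\to\sK_S$ is a morphism of commutative monoids it sends the unit to the unit and commutes with $\Sigma_T$, so $\rho^{2,1}(\P^1_*)\bigl(\Sigma_T(1)\bigr)=\Sigma_T(1)=c_\sK|_{\P^1_*}$, the last equality because $c_\sK$ is an orientation for $\sK_S$ (remark~\ref{rem:Orient}). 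As $\sC\sK^{2,1}(\P^1_*)$ is a direct summand of $\sC\sK^{2,1}(\P^1)$ and $\rho^{2,1}(\P^1)$ is injective by lemma~\ref{lem:0slice}, the map $\rho^{2,1}(\P^1_*)$ is injective; hence $c_1=\Sigma_T(1)$, and $c_{\sC\sK}$ is an orientation.

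The only step requiring real care is the passage from the $\P^n_S$ to $\P^\infty$: verifying that the relevant ${\varprojlim}^1$ groups vanish and that the level-wise lifts, together with their uniqueness, survive the inverse limit. Everything else is a formal consequence of lemma~\ref{lem:0slice}, the rank hypothesis, and the compatibility of $\rho$ with the ring structure and with $\Sigma_T$.
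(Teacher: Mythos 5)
Your proof is correct and follows essentially the same route as the paper: use the exact sequence of lemma~\ref{lem:0slice} together with the rank hypothesis to lift $c_\sK$ uniquely over each $\P^n_S$ (the paper phrases this as $\sC\sK^{2,1}(\P^n_*)\to\sK^{2,1}(\P^n_*)$ being an isomorphism), pass to $\P^\infty$ via vanishing of $\varprojlim^1$ on the $K_1(\P^n_S)$-tower (Mittag--Leffler, projective bundle formula), and check the orientation condition on $\P^1$ by injectivity of the comparison map. Your write-up merely makes the Milnor sequence and the unit/suspension compatibility of $\rho$ explicit where the paper leaves them implicit.
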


\begin{proof} By our assumptions on $S$, we have  the exact sequence   
\[
0\to \sC\sK_S^{2,1}(\P_S^n)\to K_0(\P_S^n)\xrightarrow{rnk}H^0(S,\Z)\to0
\]
Thus $\sC\sK_S^{2,1}(\P_*^n)\to \sK_S^{2,1}(\P_*^n)$ is an isomorphism.   Furthermore, the canonical map $\sC\sK_S^{1,1}(\P_S^n)\to \sK_S^{1,1}(\P^n)=K_1(\P_S^n)$ is an isomorphism. By the projective bundle formula,  the projective system of groups $n\mapsto K_1(\P^n_S)$ satisfies the Mittag-Leffler condition. Thus, passing to the limit over $n$ gives us the  isomorphism
\[
\sC\sK_S^{2,1}(\P_S^\infty)\cong \sK_S^{2,1}(\P_S^\infty)
\]
and the orientation $c_\sK=(1-[\sO_{\P_S^n}(-1)])_n\in  \sK_S^{2,1}(\P^\infty)$  gives us the element $c_{\sC\sK}\in \sC\sK^{2,1}(\P^\infty)$. The fact that $\Sigma_{\P^1}(1_{\sK_S})=c_{\sK|\P^1}$ implies $\Sigma_{\P^1}(1_{\sC\sK_S})=c_{\sC\sK|\P^1}$, hence $c_{\sC\sK}$ is an orientation.
\end{proof}

\begin{rem}\label{rem:Smooth} By remark~\ref{rem:SmoothCompatibiilty}, if $S$ is smooth over a perfect field, then the hypotheses of lemma~\ref{lem:orient}  are fulfilled and hence $\sC\sK_S$ has  a unique orientation mapping to the standard orientation of $\sK_S$.
\end{rem}

\section{An explicit model} \label{sec:ExplicitModel} We take $S=\Spec k$, $k$ a perfect field. Let $\Delta^*$ be the cosimplicial scheme $n\mapsto \Delta^n_k$, with
\[
\Delta^n_k:=\Spec k[t_0,\ldots, t_n]/\sum_it_i-1.
\]
The morphism $\Delta(g):\Delta^n\to \Delta^m$ associated to $g:[n]\to [m]$ is given by
\[
g^*(t_i)=\sum_{j\in g^{-1}(i)}t_j
\]
where as usual the sum over the empty index set is 0. A {\em face} of $\Delta^m$ is a closed subset $F$ defined by equations of the form $t_{i_1}=\ldots t_{i_r}=0$.

We briefly recall the construction of the homotopy coniveau tower associated to a presheaf of $S^1$-spectra on $\Sm/k$.  For $X\in \Sm/k$ and $n,m \ge0$ integers, let $\sS_X^n(m)$ denote the set of closed subsets $W$ of $X\times \Delta^m$ such that
\[
\codim_{X\times F}W\cap X\times F\ge n
\]
for all faces $F$ of $\Delta^m$. For $E\in \Spt_{S^1}(k)$, we let 
\[
E^{(n)}(X,m):=\hocolim_{W\in\sS_X^n(m)}E^W(X\times\Delta^m).
\]
This gives us the simplicial spectrum $m\mapsto E^{(n)}(X,m)$, and the associated total spectrum $E^{(n)}(X)$. This construction is contraviarantly  functorial in $X$ for equi-dimensional morphisms. Letting $\Sm//k\subset \Sm/k$ denote the subcategory of $\Sm/k$ with the same objects, and with morphisms the smooth morphisms, sending $X$ to $E^{(n)}(X)$ defines a presheaf of spectra on $\Sm//k$. . It was shown in \cite[theorem 4.1.1]{LevineHC} that there are models $\tilde{E}^{(n)}(X)$ for $E^{(n)}(X)$ so that $X\to \tilde{E}^{(n)}(X)$ extends to a presheaf of spectra $\tilde{E}^{(n)}$ on $\Sm/k$, isomorphic to $X\mapsto E^{(n)}(X)$ on $\Sm//k$. The main result of \cite{LevineHC} is:

\begin{thm}[\hbox{\cite[theorem 7.1.1]{LevineHC}}] \label{thm:HC} Let $E\in \Spt_{S^1}(k)$ be quasi-fibrant. There is a natural isomorphism in $\SH_{S^1}(k)$
\[
f_nE\cong \tilde{E}^{(n)}.
\]
\end{thm}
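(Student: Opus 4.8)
The strategy is to identify $\tilde E^{(n)}\to E$ with the co-unit $f_nE\to E$ by checking the two properties that characterize the latter. Recall that on $\SH_{S^1}(k)$ the functor $f_n$ is the colocalization onto the localizing subcategory $\mathcal{C}_n\subseteq\SH_{S^1}(k)$ generated by the $n$-fold $\G_m$-suspension spectra $\G_m^{\wedge n}\wedge\Sigma^\infty X_+$, $X\in\Sm/k$; since $\SH_{S^1}(k)$ is compactly generated, a morphism $\alpha\colon A\to E$ is isomorphic over $E$ to $f_nE\to E$ as soon as (a) $A\in\mathcal{C}_n$, and (b) $\alpha$ induces an isomorphism on $\Hom_{\SH_{S^1}(k)}(F,-)$ for every $F\in\mathcal{C}_n$ --- equivalently, for $F$ running over the compact generators $\G_m^{\wedge n}\wedge\Sigma^\infty X_+$ and their shifts. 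So I would reduce the theorem to establishing (a) and (b) for $\tilde E^{(n)}\to E$. The map here uses the identification $\tilde E^{(0)}\cong E$ in $\SH_{S^1}(k)$: the condition defining $\sS_X^0(m)$ is vacuous, so the filtered system $W\mapsto E^W(X\times\Delta^m)$ has the terminal object $W=X\times\Delta^m$ and $E^{(0)}(X,m)\simeq E(X\times\Delta^m)$; hence $\tilde E^{(0)}(X)\simeq|m\mapsto E(X\times\Delta^m)|$ is the Suslin--Voevodsky singular construction on the quasi-fibrant (hence Nisnevich-local) spectrum $E$, which computes its $\A^1$-localization and so is isomorphic to $E$ in $\SH_{S^1}(k)$. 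We thus obtain a tower $\cdots\to\tilde E^{(n+1)}\to\tilde E^{(n)}\to\cdots\to\tilde E^{(0)}=E$ over $E$, with cofibres $s^{HC}_qE:=\cofib(\tilde E^{(q+1)}\to\tilde E^{(q)})$.

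The crucial step is the analysis of the layers $s^{HC}_qE$. Here I would combine the Morel--Voevodsky homotopy purity theorem --- for a codimension $c$ closed immersion $W\hookrightarrow Y$ of smooth $k$-schemes, the Thom space of the normal bundle is, Zariski-locally on $W$, a $c$-fold $\G_m$-suspension of $W_+$, as $T\simeq S^1\wedge\G_m$ --- with the localization triangles for the presheaves $E^W$ and Noetherian induction on the supports $W$; crucially this uses the moving lemma and extended functoriality of \cite[theorem 4.1.1]{LevineHC}, which I may assume, so that homotopy invariance and Nisnevich descent are available on all of $\Sm/k$. The outcome should be that $s^{HC}_qE$ is, naturally in $E$ and up to a simplicial shift, of the form $\G_m^{\wedge q}\wedge B_q$, with $B_q$ a homotopy invariant Nisnevich sheaf of $S^1$-spectra that is \emph{birational}, i.e.\ $f_1B_q=0$; equivalently, $s^{HC}_qE\in\mathcal{C}_q$ and $f_{q+1}s^{HC}_qE=0$. (Since the $q$-th slice $s_qE$ is characterized by exactly these two properties, this simultaneously yields $s^{HC}_qE\cong s_qE$, as used elsewhere in the paper.)

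Granting the layer analysis, property (b) is formal. By iterating the octahedral axiom along the tower, $\cofib(\tilde E^{(n)}\to E)$ has a finite filtration with graded pieces $s^{HC}_0E,\dots,s^{HC}_{n-1}E$; for $q<n$ the relation $f_{q+1}s^{HC}_qE=0$ says exactly that $s^{HC}_qE$ lies in the right orthogonal $\mathcal{C}_{q+1}^{\perp}\subseteq\mathcal{C}_n^{\perp}$, and right orthogonal classes are closed under extensions, so $\Hom_{\SH_{S^1}(k)}\bigl(F,\cofib(\tilde E^{(n)}\to E)\bigr)=0$ for all $F\in\mathcal{C}_n$ and all shifts --- which is (b). Property (a), that $\tilde E^{(n)}$ itself lies in $\mathcal{C}_n$, is more delicate and is where the specific structure of the coniveau construction has to be used beyond the layer analysis: one shows, via the projective bundle formula for the $\tilde E^{(\bullet)}$ together with homotopy purity, that $\tilde E^{(n)}$ is built as a presheaf (up to weak equivalence) from $n$-fold $\G_m$-suspensions, the passage along the infinite tower being controlled by Levine's connectivity estimates for the homotopy coniveau tower. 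With (a) and (b) in hand, the characterization above gives the desired natural isomorphism $f_nE\cong\tilde E^{(n)}$ in $\SH_{S^1}(k)$.

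The hard part will be the content of the second paragraph together with the argument for (a): the moving lemma (here granted as \cite[theorem 4.1.1]{LevineHC}) and, resting on it, the homotopy-purity/Gersten computation of the layers and the $\G_m$-suspension structure of the tower. Once those geometric inputs are in place, the identification with the slice tower is a formal manipulation of colocalization triangles.
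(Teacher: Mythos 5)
First, a point of reference: the paper does not prove theorem~\ref{thm:HC} at all --- it is imported verbatim from \cite[theorem 7.1.1]{LevineHC} --- so there is no in-paper argument to match yours against; the only meaningful comparison is with Levine's original proof, of which this theorem is the main result. At the level of architecture your outline is the right one and is consistent with that source: identify $\tilde E^{(0)}\simeq E$ using quasi-fibrancy (homotopy invariance makes $m\mapsto E(X\times\Delta^m)$ essentially constant, and the support condition for $n=0$ is vacuous), and then characterize $f_nE\to E$ by the two colocalization properties (a) membership in $\mathcal{C}_n$ and (b) isomorphism on maps from $\mathcal{C}_n$; your reduction of (b) to the vanishing $f_{q+1}s^{HC}_qE=0$ for $q<n$ via closure of right orthogonals under extensions is formally correct.

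The genuine gap is that the two load-bearing inputs are asserted rather than proved, and they are precisely the content of \cite{LevineHC}: (i) the purity/Gersten analysis of the layers, giving $s^{HC}_qE\simeq\G_m^{\wedge q}\wedge B_q$ with $f_1B_q=0$ (even granting the moving lemma \cite[theorem 4.1.1]{LevineHC}, this requires the localization and Nisnevich-descent theorems for $\tilde E^{(q)}$, the homotopy-purity identification of the supported terms, and the commutation $f_{q+1}\Sigma_{\G_m}^q\cong\Sigma_{\G_m}^qf_1$, none of which is sketched); and (ii) property (a), that $\tilde E^{(n)}$ lies in $\mathcal{C}_n$, for which ``projective bundle formula plus homotopy purity plus connectivity estimates'' is not an argument --- in Levine's treatment this effectivity rests on the hardest part of the paper (the $\G_m$-delooping theorems for the homotopy coniveau construction), not on a formal filtration argument. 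Two smaller imprecisions: the parenthetical claim that $q$-effectivity together with $f_{q+1}(-)=0$ ``characterizes'' $s_qE$ is false as stated (the zero object satisfies both; one needs the comparison of towers, i.e.\ the map from $f_qE$), and in the identification $\tilde E^{(0)}\simeq E$ one should note $E(\emptyset)\simeq *$ (Nisnevich locality) so that $E^{X\times\Delta^m}(X\times\Delta^m)\simeq E(X\times\Delta^m)$. In short: a sound roadmap, faithful in outline to the cited source, but not a proof --- the decisive geometric theorems are named, not established.
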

Here {\em quasi-fibrant} means that a fibrant replacement $E\to E^{fib}$ gives a weak equivalence $E(X)\to E^{fib}(X)$ for all $X\in\Sm/k$.

For a category $\sC$, we let $\sC*$ be $\sC$ with a final object $*$ adjoined. Let $I$ be a finite category and $\sX:I\to \Sm//k*$ an $I$-diagram in $\Sm//k$, that is, a functor. For  a presheaf of spectra $E$ on $\Sm//k$,   define $E(\sX)$ as
\[
E(\sX):=\holim_{I^\op} E\circ \sX^\op,
\]
where $E(*)$ is defined to be the initial object in $\Spt$. For an $I$-diagram $\sX:I\to \Sm//k*$, we have the $I$-diagram  $\Sigma_T^\infty \sX_+:I\to \Spt_T(k)$ defined by $\Sigma_T^\infty \sX_+(i):=\Sigma_T^\infty \sX(i)_+$ if $\sX(i)$ is in $\Sm/k$, and setting $\Sigma_T^\infty *_+$ equal to the final $T$-spectrum $(pt, pt,\ldots)$. We similarly define $\Sigma_s^\infty \sX_+:I\to \Spt_{S^1}(k)$.

\begin{ex}\label{ex:Supports} Let $X$ be in $\Sm/k$ and $j:U\to X$ an open immersion with closed complement $Z$. Let $I$ be the category 
\[
\xymatrix{
0\ar[r]^a\ar[d]&1\\
{*}}
\]
and let  $X/U:I\to \Sm//k$ be the diagram $0\mapsto U$, $1\mapsto X$, $*\mapsto *$, $a\mapsto j$. Then for $E$ a presheaf of spectra on $\Sm//k$, $E(X/U)$ is just the homotopy fiber of $j^*:E(X)\to E(U)$. Similarly, if $I$ is the one-point category $0$ and $\sX$ is the functor $\sX:0\to \Sm//k$ with $\sX(0)=X$, we have a canonical isomorphism $E(X)\cong E(\sX)$ in $\SH$. 
\end{ex}

\begin{lem} \label{lem:Rep} Let $\sX:I\to \Sm//k*$ be a finite diagram of smooth $k$-schemes (possibly with $\sX(i)=*$ for some values $i\in I$) and take $\sE\in \SH(k)$. Let $E\in \Spt_{S^1}(k)$ be a fibrant model for $\Omega^\infty_T\sE\in \SH_{S^1}(k)$. Then 
$\sHom(\hocolim_I\Sigma_T^\infty \sX_+, f_n\sE)\in \SH$ is represented by the spectrum $E^{(n)}(\sX)$.  
\end{lem}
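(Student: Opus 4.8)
The plan is to strip off the homotopy colimit using the standard fact that the enriched mapping spectrum $\sHom(-,-)$ is contravariant in its first variable and carries homotopy colimits there to homotopy limits, reducing the claim to the case of a single smooth scheme, and then to identify $\sHom(\Sigma_T^\infty X_+, f_n\sE)$ with $E^{(n)}(X)$ by combining the $(\Sigma_T^\infty,\Omega_T^\infty)$-adjunction, the compatibility of the slice truncation with $\Omega_T^\infty$, Theorem~\ref{thm:HC}, and the construction of the presheaf $\tilde E^{(n)}$.

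Concretely, after replacing $f_n\sE$ by a fibrant model (which changes nothing in $\SH$), one has
\[
\sHom\bigl(\hocolim_I\Sigma_T^\infty\sX_+,\,f_n\sE\bigr)\;\cong\;\holim_{I^\op}\bigl(i\mapsto \sHom(\Sigma_T^\infty\sX(i)_+,\,f_n\sE)\bigr)
\]
in $\SH$. When $\sX(i)=*$ the $i$-th term is $\sHom$ of the zero $T$-spectrum, i.e. the zero spectrum, which matches $E^{(n)}(*)=0$. Thus it suffices to construct, for $X\in\Sm//k$, isomorphisms $\sHom(\Sigma_T^\infty X_+, f_n\sE)\cong E^{(n)}(X)$ in $\SH$ that are natural for smooth morphisms; the two $I^\op$-diagrams whose homotopy limits we must compare then agree, and the right-hand one is $E^{(n)}(\sX)$ by its definition. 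For the single-scheme identification, adjunction gives $\sHom(\Sigma_T^\infty X_+, f_n\sE)\cong\sHom_{\SH_{S^1}(k)}(\Sigma_s^\infty X_+, \Omega_T^\infty f_n\sE)$. Now $\Omega_T^\infty$ commutes with the $n$-th slice truncation: $\Sigma_T^\infty$ sends the generators $\Sigma_s^\infty(Y_+\wedge\G_m^{\wedge m})$, $m\ge n$, $Y\in\Sm/k$, of the $S^1$-effective subcategory into $\Sigma_T^n\SH^\eff(k)$, and since $\Omega_T^\infty$ is right adjoint to $\Sigma_T^\infty$ this forces $\Omega_T^\infty f_n\sE\to\Omega_T^\infty\sE$ to be the slice colocalization in $\SH_{S^1}(k)$, so $\Omega_T^\infty f_n\sE\cong f_nE$ (the $S^1$-slice truncation) for $E$ our fibrant model of $\Omega_T^\infty\sE$; this is the compatibility established in \cite{LevineHC}. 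As $E$ is fibrant, hence quasi-fibrant, Theorem~\ref{thm:HC} gives $f_nE\cong\tilde E^{(n)}$ in $\SH_{S^1}(k)$; moreover $\sHom_{\SH_{S^1}(k)}(\Sigma_s^\infty X_+,\tilde E^{(n)})\cong\tilde E^{(n)}(X)$ since $\Sigma_s^\infty X_+$ corepresents evaluation at $X$ and $\tilde E^{(n)}$ is built so that its sections compute this mapping spectrum, and $\tilde E^{(n)}(X)\simeq E^{(n)}(X)$ by \cite[theorem 4.1.1]{LevineHC}. Chaining these isomorphisms yields the desired natural identification, naturality for smooth pull-back coming from the smooth-base-change compatibility of $f_n$ and the slices (remark~\ref{rem:SmoothCompatibiilty}) together with the functoriality built into the constructions of $\tilde E^{(n)}$ and $E^{(n)}$ on $\Sm//k$.

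I expect the real work here to be bookkeeping rather than a new idea. On the conceptual side the one substantive input is $\Omega_T^\infty f_n\sE\cong f_n\Omega_T^\infty\sE$, which rests on $\Omega_T^\infty$ carrying effective $T$-spectra to effective $S^1$-spectra, a connectivity statement. On the technical side one must be careful that all the identifications above are natural enough on $\Sm//k$ to be compared with the homotopy limit defining $E^{(n)}(\sX)$ both on objects and on morphisms of $I$; a mild nuisance is that $\tilde E^{(n)}$ is a presheaf on $\Sm/k$ whereas the homotopy coniveau presheaf $E^{(n)}$ is only functorial for equidimensional morphisms, so one works throughout on $\Sm//k$ and invokes the comparison $\tilde E^{(n)}|_{\Sm//k}\simeq E^{(n)}$.
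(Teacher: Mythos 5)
Your proposal is correct and follows essentially the same route as the paper: the $(\Sigma_T^\infty,\Omega_T^\infty)$-adjunction, the nontrivial compatibility $\Omega_T^\infty f_n\sE\cong f_n\Omega_T^\infty\sE$ (for which the paper likewise cites \cite{LevineHC}, theorems 7.1.1 and 9.0.3 -- note your purely adjunction-based sketch of this step is not by itself sufficient, since the hard point is that $\Omega_T^\infty f_n\sE$ is again $n$-effective on the $S^1$-side), Theorem~\ref{thm:HC} identifying $f_nE$ with the homotopy coniveau construction, and the conversion of the homotopy colimit in the source into a homotopy limit of mapping spectra. The only cosmetic difference is bookkeeping: you strip the hocolim first and argue naturality objectwise via $\tilde E^{(n)}$, whereas the paper applies the adjunction first and handles the diagram-level naturality by working in the homotopy category of $I$-diagrams with the projective model structure.
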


\begin{proof}  The adjunction 
\[
\Sigma^\infty_T:\SH_{S^1}(k)\leftrightarrow\SH(k):\Omega^\infty_T
\]
gives the isomorphism in  $\SH$
\[
\sHom(\hocolim_I\Sigma_T^\infty \sX_+, f_n\sE)\cong \sHom(\hocolim_I\Sigma_s^\infty \sX_+, \Omega^\infty_T f_n\sE).
\]
It follows from  \cite[theorem 7.1.1, theorem 9.0.3]{LevineHC}  that we have the isomorphism in $\SH_{S^1}(k)$
\[
\Omega^\infty_T f_n\sE\cong f_n\Omega^\infty_T\sE=f_n E.
\]
We thus  have the isomorphisms in $\SH$
\begin{align*}
 \sHom(\hocolim_I\Sigma_s^\infty \sX_+, \Omega^\infty_T f_n\sE)&\cong 
  \sHom(\hocolim_I\Sigma_s^\infty \sX_+,   f_nE)\\
  &\cong \holim_I\sHom(\Sigma_s^\infty \sX_+,   f_nE)\\
  & \cong \holim_I f_nE\circ \sX^\op.
  \end{align*}
We give the category of $I$-diagrams in $\Spt$ the projective model structure, with weak equivalences the pointwise ones, and let $Ho(\Spt_I)$ denote the homotopy category. By \cite[theorem 7.1.1]{LevineHC}, we have the isomorphism  in $Ho(\Spt_I)$ 
\[
f_nE\circ \sX^\op\cong E^{(n)}\circ \sX^\op,
\]
giving the isomorphism in $\SH$
\[
\sHom(\hocolim_I\Sigma_s^\infty \sX_+, \Omega^\infty_T f_n\sE)\cong \holim_I E^{(n)}\circ \sX^\op=E^{(n)}(\sX).
\]
\end{proof}

We let $K\in \Spt_{S^1}(k)$ be the presheaf of spectra given by sending $X$ to the Quillen-Waldhausen spectrum $K(X)$ representing the algebraic $K$-theory of $X$. We note that $K$ is a quasi-fibrant object of $\Spt_{S^1}(k)$.

\begin{prop}\label{prop:Comp1} Let $\sX:I\to \Sm//k*$ be a finite diagram of smooth $k$-schemes  as in lemma~\ref{lem:Rep}. Take $X$ in $\Sm/k$. There is a canonical  isomorphism
\[
\sC\sK^{p,q}(\sX)\cong \pi_{2q-p}K^{(q)}(\sX), p, q\in \Z,
\]
natural in $\sX$. 
\end{prop}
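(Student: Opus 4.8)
The plan is to reduce the statement to the identification of the $T$-spectrum $\sC\sK_k = f_0\sK_k$ with the homotopy coniveau tower applied to the $K$-theory presheaf, exactly as packaged in lemma~\ref{lem:Rep}. First I would recall that by definition $\sC\sK^{p,q}(\sX) = [\hocolim_I\Sigma_T^\infty\sX_+, \sC\sK_k(p,q)]_{\SH}$, where I write $\sC\sK_k(p,q) := \sC\sK_k[p]\wedge (S^1)^{\wedge(-p+2q)}\wedge\ldots$ — more precisely, using Bott periodicity $\Sigma_T\sK_k\cong\sK_k$ and the canonical isomorphism $f_{n+1}\circ\Sigma_T\cong\Sigma_T\circ f_n$, one gets $\Sigma_T^q f_0\sK_k\cong f_q\Sigma_T^q\sK_k\cong f_q\sK_k$. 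Hence the $(p,q)$-graded piece of the cohomology theory represented by $\sC\sK_k = f_0\sK_k$ is computed by $f_q\sK_k$ with an $S^1$-shift: concretely
\[
\sC\sK^{p,q}(\sX)\cong \sHom(\hocolim_I\Sigma_T^\infty\sX_+, f_q\sK_k)_{p-2q} = \pi_{2q-p}\,\sHom(\hocolim_I\Sigma_T^\infty\sX_+, f_q\sK_k).
\]

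Second, I would apply lemma~\ref{lem:Rep} with $n = q$ and $\sE = \sK_k$. For this I need a fibrant model $E\in\Spt_{S^1}(k)$ of $\Omega^\infty_T\sK_k$. The content of \cite[lemma 2.5, theorem 4.1]{RSO} (recalled just before the definition of $\sC\sK_S$) together with the fact that $K\in\Spt_{S^1}(k)$ is quasi-fibrant identifies $\Omega^\infty_T\sK_k$ in $\SH_{S^1}(k)$ with the presheaf of spectra $K$: indeed the zeroth space of the $T$-spectrum $\sK_k$ is $\bar\sK_k$, which represents $K$-theory on $\Sm/k$, and the quasi-fibrancy of $K$ means a fibrant replacement $K\to K^{fib}$ is a pointwise weak equivalence on $\Sm/k$, so $E := K^{fib}$ serves as the required fibrant model. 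Then lemma~\ref{lem:Rep} gives
\[
\sHom(\hocolim_I\Sigma_T^\infty\sX_+, f_q\sK_k)\cong E^{(q)}(\sX),
\]
and since $E\to K$ is a pointwise weak equivalence and the homotopy coniveau construction $(-)^{(q)}(\sX)$ only depends on the pointwise homotopy type of the presheaf (it is built out of $\hocolim$ over supports and $\holim$ over $I$), we have $E^{(q)}(\sX)\cong K^{(q)}(\sX)$ in $\SH$. Combining with the first step yields $\sC\sK^{p,q}(\sX)\cong\pi_{2q-p}K^{(q)}(\sX)$.

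Third, I would check naturality in $\sX$. The construction $\sX\mapsto K^{(q)}(\sX)$ is functorial for morphisms of $I$-diagrams in $\Sm//k*$ by the discussion preceding lemma~\ref{lem:Rep}, and the isomorphism of lemma~\ref{lem:Rep} is constructed via the adjunction $(\Sigma^\infty_T,\Omega^\infty_T)$ and the natural isomorphisms $\Omega^\infty_T f_n\sE\cong f_n\Omega^\infty_T\sE\cong f_n E\cong E^{(n)}$ of \cite[theorems 7.1.1, 9.0.3]{LevineHC}, all of which are natural; so naturality is automatic once one tracks that the Bott-periodicity identification $\Sigma_T^q f_0\sK_k\cong f_q\sK_k$ is itself canonical (which it is, being induced by the structure maps of the $T$-spectrum $\sK_k$ and the canonical isomorphism $f_{q}\circ\Sigma_T\cong\Sigma_T\circ f_{q-1}$, applied $q$ times).

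The main obstacle is bookkeeping of the two gradings and Bott periodicity: one must be careful that the shift by $\Sigma_T^q$ contributes a $\G_m$-degree $q$ and an $S^1$-degree $2q$, so that the residual $S^1$-shift left after absorbing $\Sigma_T^q$ into $f_0\sK_k\leadsto f_q\sK_k$ is $p-2q$, giving $\pi_{2q-p}$ of the representing spectrum; getting this sign/index exactly right, and making sure the case $q<0$ is handled (where $f_q\sK_k = \sK_k$ since $\sK_k$ itself is effective, so the formula reduces to ordinary $K$-theory, $\sK^{p,q}(\sX) = \pi_{2q-p}K(\sX)$), is where the care is needed. Everything else is a formal concatenation of results already in the excerpt.
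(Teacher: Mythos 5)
Your proposal is correct and follows essentially the same route as the paper's proof: rewrite $\Sigma^{p,q}\sC\sK$ as $\Sigma_{S^1}^{p-2q}\Sigma_T^q f_0\sK$, use Bott periodicity together with $f_{q}\circ\Sigma_T\cong\Sigma_T\circ f_{q-1}$ to replace this by $\Sigma_{S^1}^{p-2q}f_q\sK$, and then apply the $(\Sigma^\infty_T,\Omega^\infty_T)$ adjunction, the identification $\Omega^\infty_T f_q\sK\cong f_qK$ and lemma~\ref{lem:Rep} to land on $\pi_{2q-p}K^{(q)}(\sX)$. The index bookkeeping and the naturality discussion match the paper's argument, so there is nothing to add.
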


\begin{proof} We make $X\mapsto \pi_{2q-p}K^{(q)}(X)$ a functor in $X$ by using the functorial model $\tilde{K}^{(q)}$ for $K^{(q)}$ and the canonical isomorphism $ \pi_*K^{(q)}(X)\cong  \pi_*\tilde{K}^{(q)}(X)$.

Using lemma~\ref{lem:Rep} and a variety of adjunctions and definitions, we have the sequence of isomorphisms
\begin{align*}
\sC\sK^{p,q}(\sX)&=\Hom_{\SH(k)}(\hocolim_I\Sigma_T^\infty \sX_+,\Sigma^{p,q}\sC\sK)\\
&=\Hom_{\SH(k)}(\hocolim_I\Sigma_T^\infty \sX_+,\Sigma^{p,q}f_0\sK)\\
&=\Hom_{\SH(k)}(\hocolim_I\Sigma_T^\infty \sX_+,\Sigma_{S^1}^{p-2q}\Sigma_T^qf_0\sK)\\
&=\Hom_{\SH(k)}(\hocolim_I\Sigma_T^\infty \sX_+,\Sigma_{S^1}^{p-2q}f_q\Sigma^q_T\sK)\\
&=\Hom_{\SH(k)}(\Sigma^\infty_T\Sigma_{S^1}^{2q-p}\hocolim_I\Sigma_s^\infty \sX_+,f_q\sK)\\
&=\Hom_{\SH_{S^1}(k)}(\Sigma_{S^1}^{2q-p}\hocolim_I\Sigma_s^\infty \sX,\Omega^\infty_Tf_q\sK)\\
&=\Hom_{\SH_{S^1}(k)}(\Sigma_{S^1}^{2q-p}\hocolim_I\Sigma_s^\infty \sX,f_qK)\\
&=\Hom_{\SH}(S^{2q-p},\sHom(\hocolim_I\Sigma_T^\infty \sX_+, f_qK))\\
&=\pi_{2q-p}K^{(q)}(\sX).
\end{align*}
\end{proof}

\begin{cor} \label{cor:CKComp} Let $X$ be smooth over $k$. Then $\sC\sK^{2n,n}(X)$ is equal to $K_0(X)$ for $n\le 0$. For $n>0$, 
$\sC\sK^{2n,n}(X)$ is determined by the exact sequence
\[
K_0^{(n)}(X,1)\xrightarrow{\delta_1^*-\delta_0^*}K_0(\sM_X^{(n)})\to \sC\sK^{2n,n}(X)\to 0.
\]
In particular, 
\[
\sC\sK^{2n,n}(k)\to \sK^{2n,n}(k)=K_0(k)=\Z
\]
is an isomorphism for $n\le 0$;  for $n>0$, $\sC\sK^{2n,n}(k)=0$. 
\end{cor}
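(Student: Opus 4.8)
The plan is to deduce everything from Proposition~\ref{prop:Comp1}, applied to the one-point diagram $\sX = X$, so that $\sC\sK^{2n,n}(X) \cong \pi_0 K^{(n)}(X)$. First I would recall the structure of the simplicial spectrum $m \mapsto K^{(n)}(X,m)$: by definition $K^{(n)}(X,m) = \hocolim_{W \in \sS^n_X(m)} K^W(X \times \Delta^m)$, and since $X$ is smooth so that $X\times\Delta^m$ is regular, the $K$-theory with supports $K^W(X\times\Delta^m)$ agrees with the $G$-theory (i.e. $K$-theory of coherent sheaves) with supports, which is the $K$-theory of the abelian category $\sM^{(n)}_{X\times\Delta^m}$ of coherent sheaves supported in codimension $\ge n$. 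The homotopy colimit over the filtered (up to cofinality) system $\sS^n_X(m)$ then computes $K$ of the category $\sM^{(n)}_{X,m}$ of coherent sheaves on $X\times\Delta^m$ whose support meets every face in codimension $\ge n$; in particular for $m=0$ this is $K(\sM^{(n)}_X)$. Taking $\pi_0$ of the total spectrum of a simplicial spectrum gives the coequalizer of the two face maps $\pi_0$ of level $1$ into $\pi_0$ of level $0$ (together with the contribution of $\pi_{-1}$ of level $1$, which vanishes here since these are connective $K$-theory spectra of abelian categories — $K_{-1}$ of an abelian category is $0$). This yields exactly the exact sequence
\[
K_0^{(n)}(X,1) \xrightarrow{\delta_1^* - \delta_0^*} K_0(\sM^{(n)}_X) \to \sC\sK^{2n,n}(X) \to 0,
\]
where $\delta_0,\delta_1 : [0] \to [1]$ are the two coface maps.

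Next I would treat the degenerate range $n \le 0$. For $n \le 0$ the codimension condition $\codim_{X\times F} W \cap (X\times F) \ge n$ is vacuous, so $\sS^n_X(m)$ has a maximal element $X\times\Delta^m$ itself, and $K^{(n)}(X,m) = K(X\times\Delta^m)$. The resulting simplicial spectrum $m \mapsto K(X\times\Delta^m)$ is, by homotopy invariance of $K$-theory on the regular scheme $X$, equivalent to the constant simplicial spectrum $K(X)$; hence its total spectrum is $K(X)$ and $\pi_0 K^{(n)}(X) = K_0(X)$. For the comparison with $\sK^{2n,n}(X)$, note $\sK^{2n,n}(X) = K_0(X)$ by Bott periodicity and the representability of $K$-theory, and the canonical map $\sC\sK \to \sK$ on this bidegree is induced by the inclusion of presheaves $f_0 K \to K$, which on $\pi_0 K^{(0)}(X) = \pi_0 K(X)$ is the identity; so it is an isomorphism for $n \le 0$. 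Specializing to $X = \Spec k$ gives $\sC\sK^{2n,n}(k) \cong K_0(k) = \Z$ for $n \le 0$.

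Finally, for $n > 0$ and $X = \Spec k$ I would argue that the coequalizer above is zero. Here $\sM^{(n)}_k$ is the category of finite-dimensional $k$-vector spaces supported in codimension $\ge n$ on $\Spec k$, which for $n \ge 1$ is the zero category (the only closed subset of $\Spec k$ of codimension $\ge 1$ is empty), so $K_0(\sM^{(n)}_k) = 0$ and hence $\sC\sK^{2n,n}(k) = 0$. Alternatively, and perhaps more transparently, one observes that $\sC\sK^{2n,n}(k) = \pi_0 K^{(n)}(k)$ and $K^{(n)}(k)$ is built from coherent sheaves on $\Delta^m_k$ supported in codimension $\ge n$, which is empty support for $n \ge 1$, so $K^{(n)}(k)$ is contractible; either way the claim follows. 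The main obstacle — really the only non-formal point — is the identification in the $n > 0$, general smooth $X$ case of the homotopy colimit $\hocolim_{W \in \sS^n_X(m)} K^W(X\times\Delta^m)$ with $K(\sM^{(n)}_{X,m})$, i.e. matching the support-filtered $K$-theory with the $K$-theory of the abelian category of coherent sheaves with support conditions; this is exactly the localization/dévissage input underlying the homotopy coniveau tower (and the reason $K$ must be replaced by $G$-theory, legitimate since $X$ is regular), and it is the content one must cite from \cite{LevineHC}. Once that identification is in hand, extracting $\pi_0$ of a simplicial spectrum as a coequalizer is routine, and the vanishing of $K_{-1}$ of an abelian category (so that there is no correction term) is standard.
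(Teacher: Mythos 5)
Your proposal is correct and follows essentially the same route as the paper: apply Proposition~\ref{prop:Comp1} to the one-point diagram to get $\sC\sK^{2n,n}(X)\cong\pi_0K^{(n)}(X)$, note that $K^{(n)}(X)\simeq K(X)$ for $n\le 0$, and for $n>0$ use the connectivity of the levels of the simplicial spectrum to read off $\pi_0$ as the stated coequalizer, then specialize to $X=\Spec k$. The only cosmetic difference is that you justify connectivity via the identification of each level with the $K$-theory of an abelian category of coherent sheaves (Quillen localization on the regular schemes $X\times\Delta^m$), whereas the paper cites directly that $K^W(Y)$ is $-1$-connected for $Y\in\Sm/k$ --- the same underlying fact.
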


\begin{proof} Viewing $X$ as the one-point diagram $*\mapsto X$, it follows from proposition~\ref{prop:Comp1} that we have a canonical isomorphism $\sC\sK^{2n,n}(X)\cong \pi_0(K^{(n)}(X))$. For $n\le 0$, $K^{(n)}(X)$ is weakly equivalent to $K(X)$. For $n>0$, $K^{(n)}(X,m)$ is -1 connected, since $K^W(Y)$ is so for all closed $W\subset Y$, $Y\in\Sm/k$. This gives the presentation for $\sC\sK^{2n,n}(X)=\pi_0(K^{(n)}(X))$. 
\end{proof}

For $X\in\Sm/k$ with generic point $\eta$ we define $\sC\sK^{a,b}(\eta)$ as the stalk at $\eta$ of the presheaf on $X_\Zar$, $U\mapsto \sC\sK^{a,b}(U)$. 

\begin{cor}\label{cor:CKComp2} Let $\eta$ be a generic point of some $X\in\Sch/k$. Then $\sC\sK^{p,q}(\eta)=0$ for $p>q$.
\end{cor}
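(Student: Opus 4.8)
The plan is to reduce to the previously established computation of $\sC\sK^{2n,n}$ and then use a limit argument over the Zariski-open neighbourhoods of $\eta$. First I would observe that by Bott periodicity (remark~\ref{rem:Conventions}) we have $\Sigma_T\sC\sK_S=\Sigma_T f_0\sK_S=f_1\Sigma_T\sK_S=f_1\sK_S$, and more generally $\Sigma_T^q\sC\sK=f_q\Sigma_T^q\sK=f_q\sK$ (using $f_{n+1}\circ\Sigma_T\cong\Sigma_T\circ f_n$). Hence for $X\in\Sm/k$ and any $p,q$ we get $\sC\sK^{p,q}(X)=(f_q\sK)^{p-2q,0}(X)$, and by proposition~\ref{prop:Comp1} this is $\pi_{2q-p}K^{(q)}(X)$. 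So it suffices to show $\pi_{2q-p}K^{(q)}(X)$ vanishes in the colimit over Zariski neighbourhoods of $\eta$ whenever $p>q$.

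Next I would analyse the connectivity of the simplicial spectrum $m\mapsto K^{(q)}(X,m)=\hocolim_{W\in\sS^q_X(m)}K^W(X\times\Delta^m)$. For any closed $W\subset Y$ with $Y$ smooth, $K^W(Y)$ is $(-1)$-connected, so each $K^{(q)}(X,m)$ is $(-1)$-connected, and therefore the totalization $K^{(q)}(X)$ is $(-1)$-connected as well; thus $\pi_j K^{(q)}(X)=0$ for $j<0$, i.e. $\sC\sK^{p,q}(X)=0$ for $p>2q$. The remaining range is $q<p\le 2q$, equivalently $0\le 2q-p<q$, and here I would pass to the stalk at $\eta$. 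Writing $K^{(q)}(\eta,m):=\colim_{U\ni\eta}K^{(q)}(U,m)$, the key point is that for $U$ running over affine neighbourhoods of $\eta$ with $\dim U = \dim X =: d$, the conditions defining $\sS^q_U(m)$ become conditions of codimension $\ge q$ on a scheme of dimension $\le d+m$; more importantly, after passing to the limit one is computing $K$-theory with supports in codimension $\ge q$ on the semilocal scheme $\Spec\sO_{X,\eta}$ and its $\Delta$-powers. The essential input is the standard fact (Gersten/coniveau for $K$-theory, as in the homotopy coniveau machinery of \cite{LevineHC}) that $K^{(q)}(\eta,-)$ is $(q-1)$-connected: the support condition forces the generic-fibre contribution to vanish, so $K^{(q)}(\eta,m)$ is built from $K$-theory spectra with supports of codimension $\ge q$ inside a $(d+m)$-dimensional local scheme all of whose points of codimension $<q$ lie over $\eta$, and a moving-lemma/continuity argument shows these are $(q-1)$-connected. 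Taking totalization preserves this bound, so $\pi_j K^{(q)}(\eta)=0$ for $j<q$, which is exactly $2q-p<q$, i.e. $p>q$.

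The main obstacle is the connectivity estimate for $K^{(q)}(\eta,-)$: proving that in the limit over neighbourhoods of $\eta$ the homotopy coniveau complex is $(q-1)$-connected rather than merely $(-1)$-connected. This is where one genuinely uses that $\eta$ is a \emph{generic} point — so that $\Spec\sO_{X,\eta}$ has no points of positive codimension, forcing any $W\in\sS^q_X(m)$ to meet $\{\eta\}\times\Delta^m$ (and the low-codimension faces) in the empty set — together with the continuity of algebraic $K$-theory and the comparison $f_qK\cong K^{(q)}$ of theorem~\ref{thm:HC}. I would handle it by invoking the localization/Gersten spectral sequence for the coniveau filtration on $\Spec\sO_{X,\eta}\times\Delta^m$, whose $E_1$-terms in the relevant range vanish because there are no residue fields of the requisite codimension, giving the needed connectivity uniformly in $m$; totalizing then yields $\sC\sK^{p,q}(\eta)=\pi_{2q-p}K^{(q)}(\eta)=0$ for all $p>q$. (Since the hypotheses only require $\eta$ to be a generic point of some $X\in\Sch/k$, I would first replace $X$ by a smooth affine neighbourhood of $\eta$ in its reduction, which is legitimate as $\sC\sK^{*,*}(\eta)$ depends only on the local ring $\sO_{X,\eta}$, a field here, hence only on $k(\eta)$.)
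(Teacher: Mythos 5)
Your reduction to $\sC\sK^{p,q}(\eta)\cong\pi_{2q-p}K^{(q)}(\eta)$ via proposition~\ref{prop:Comp1}, and the observation that all levels $K^{(q)}(\eta,m)$ are $(-1)$-connected (handling $p>2q$), match the paper. But the step you rely on for the remaining range $q<p\le 2q$ is wrong: the individual simplicial levels $K^{(q)}(\eta,m)$ are \emph{not} $(q-1)$-connected once $m\ge q$, and no moving-lemma or continuity argument over neighbourhoods of $\eta$ will make them so. Indeed, for $W\subset \eta\times\Delta^m\cong\Delta^m_{k(\eta)}$ closed of codimension $\ge q$ in good position (such $W$ exist whenever $m\ge q$, e.g.\ suitably placed closed points when $m=q$), Quillen localization gives $K^W(\Delta^m_{k(\eta)})\simeq G(W)$, whose $\pi_0=G_0(W)$ is nonzero; passing to the hocolim over admissible $W$ does not kill these classes, so $\pi_0K^{(q)}(\eta,m)\ne 0$ and the level is only $(-1)$-connected. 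Relatedly, your parenthetical claim that genericity of $\eta$ forces every $W\in\sS^q$ to meet $\eta\times\Delta^m$ in the empty set is only true for the faces of dimension $<q$ (equivalently, for all of $\Delta^m$ only when $m<q$); for $m\ge q$ such $W$ may well be nonempty. Since your argument concludes by ``totalization preserves this bound,'' the missing connectivity is a genuine gap, not a presentational one.

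The correct mechanism, which is what the paper uses, is that the extra connectivity comes from the \emph{simplicial} direction rather than from each level: because $\eta\times\Delta^m\cong\Delta^m_{k(\eta)}$ has dimension $m$, every closed subset has codimension at most $m$, so for $m<q$ the only admissible support is empty and $K^{(q)}(\eta,m)$ is the trivial spectrum. Feeding this vanishing in simplicial degrees $<q$, together with the $(-1)$-connectivity of all levels, into the strongly convergent spectral sequence $E^1_{a,b}=\pi_bK^{(q)}(\eta,a)\Rightarrow\pi_{a+b}K^{(q)}(\eta)$ kills every term with total degree $a+b<q$ (either $a<q$ or $b<0$), giving $\pi_nK^{(q)}(\eta)=0$ for $n<q$, i.e.\ $\sC\sK^{p,q}(\eta)=0$ for $p>q$. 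So you should replace the levelwise $(q-1)$-connectivity claim by this dimension-count in the simplicial degree plus the spectral-sequence argument; with that substitution the rest of your outline goes through.
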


\begin{proof} By proposition~\ref{prop:Comp1}, $\sC\sK^{p,q}(\eta)\cong \pi_{2q-p}K^{(q)}(\eta)$. But $K^{(q)}(\eta)$ is the total spectrum of the simplicial spectrum $m\mapsto K^{(q)}(\eta,m)$, with 
\[
K^{(q)}(\eta,m):=\hocolim_{W\in\sS_X^n(m)}K^W(\eta\times\Delta^m).
\]
As $\eta\times\Delta^m\cong  \Delta^m_{k(\eta)}$, it follows that  each closed subset $W$ of $\eta\times\Delta^m$ has $\codim W\le m$, and hence $K^{(q)}(\eta,m)$ is the 0-spectrum if $m<q$. Furthermore, the $K$-theory presheaf is a presheaf of -1 connected spectra and for each open immersion $j:U\to V$ in $\Sm/k$, the restriction map $j^*K_0(V)\to K_0(U)$ is surjective. Thus $K^W(\eta\times\Delta^m)$ is a -1 connected spectrum for each $m$. Using the strongly convergent spectral sequence
\[
E^1_{a,b}=\pi_bK^{(q)}(\eta,a)\Longrightarrow \pi_{a+b}K^{(q)}(\eta),
\]
we see that $\pi_nK^{(q)}(\eta)=0$ for $n<q$, hence 
\[
\sC\sK^{p,q}(\eta)\cong \pi_{2q-p}K^{(q)}(\eta)=0
\]
for $p>q$.
\end{proof}

We conclude this section with a discussion of the functor $\sC\sK^{2q-1, q}$. Let $\sE\in\SH(S)$ represent a bi-graded cohomology theory. Let $t_\sE\in \sE^{1,1}(\G_m)$ be the element corresponding to the unit $1\in \sE^{0,0}(S)$ under the suspension isomorphism. By functoriality, $t_\sE$ gives a map of pointed sets
\[
t_\sE(X):\sO_X^\times(X)\to \sE^{1,1}(X);
\]
if $\sE$ admits an orientation $c_\sE\in \sE^{2,1}(\P^\infty)$ (which we will from now on assume), then $t_\sE(X)$ is a group homomorphism.\footnote{Letting $\mS$ denote the sphere spectrum and writing $[a]:=t_\mS(a)$, this follows from the identity $[ab]=[a]+[b]+H[a][b]$ ($H:\mS\wedge\G_m\to \mS$ the stable Hopf map) and the fact that $H$ goes to zero in any oriented theory $\sE$. Both these facts are proven by Morel in \cite[\S 6]{MorelLec}.} Using the $\sE^{*,*}(S)$-module structure on $\sE^{*,*}(X)$, $t_\sE(X)$
extends to a map of $\sE^{*,*}(S)$-modules
\[
t_\sE(X):\sE^{2*,*}(S)\otimes_\Z\sO_X^\times(X)\to \sE^{2*+1,*+1}(X).
\]
 
\begin{lem} \label{lem:unit} Suppose $S=\Spec k$. Let $\eta$ be a generic point of some $X\in\Sm/k$. \\\\
1. Take $\sE=\sK$. Then  $\sK^{2*,*}(k)\cong \Z[\beta,\beta^{-1}]$, $\deg\beta=-1$ and $t_\sK(\eta):\sK^{2*,*}(k)\otimes_\Z k(\eta)^\times\to \sK^{2*+1,*+1}(\eta)$ is an isomorphism.\\\\
2. Take $\sE=\sC\sK$. Then  $\sC\sK^{2*,*}(k)\cong \Z[\beta]$, $\deg\beta=-1$ and $t_{\sC\sK}(\eta):\sC\sK^{2*,*}(k)\otimes_\Z k(\eta)^\times\to \sC\sK^{2*+1,*+1}(\eta)$ is an isomorphism.\\\\
3. Take $\sE=\MGL$ and suppose $k$ has characteristic zero. Then $\MGL^{2*,*}(k)$ is canonically isomorphic to the Lazard ring $\L^*$, $t_\MGL(\eta):k(\eta)^\times\to \MGL^{1,1}(\eta)$ is an isomorphism and $t_\MGL(\eta):\MGL^{2*,*}(k)\otimes_\Z k(\eta)^\times\to \MGL^{2*+1,*+1}(\eta)$ is surjective.
\end{lem}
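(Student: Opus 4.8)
The three parts follow the same template: first identify the coefficient ring $\sE^{2*,*}(k)$, then analyze the map $t_\sE(\eta)$ by reducing to a computation on $\Spec k(\eta)$. For part (1), the identification $\sK^{2*,*}(k)\cong\Z[\beta,\beta^{-1}]$ with $\deg\beta=-1$ is just Bott periodicity together with $\sK^{a,b}(k)=K_{2b-a}(k)$, so that $\sK^{2n,n}(k)=K_0(k)=\Z$ for all $n$ and $\beta$ corresponds to the periodicity generator. For $t_\sK(\eta)$, I would pass to the limit over affine opens $U\subseteq X$ containing $\eta$; since $K$-theory commutes with filtered limits of rings, $\sK^{1,1}(\eta)=K_1(k(\eta))$. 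The class $t_\sK$ was pinned down in remark~\ref{rem:Conventions}(2) as $[t^{\pm1}]$, i.e.\ the symbol of the coordinate unit, so $t_\sK(\eta)$ sends a unit $u\in k(\eta)^\times$ to its class in $K_1(k(\eta))$. The point is then that $K_1$ of a field is exactly its unit group (Quillen/Bass--Milnor--Serre), and after tensoring up with $\Z[\beta,\beta^{-1}]$ this gives the stated isomorphism degree by degree, since $\sK^{2*+1,*+1}(\eta)=K_1(k(\eta))\otimes_\Z\Z[\beta,\beta^{-1}]$ by periodicity.

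For part (2), the coefficient ring computation $\sC\sK^{2*,*}(k)\cong\Z[\beta]$ is already essentially done: corollary~\ref{cor:CKComp} gives $\sC\sK^{2n,n}(k)\iso\sK^{2n,n}(k)=\Z$ for $n\le0$ and $\sC\sK^{2n,n}(k)=0$ for $n>0$, and the ring structure (remark~\ref{rem:Ring}) together with the map to $\sK^{2*,*}(k)=\Z[\beta,\beta^{-1}]$ forces $\sC\sK^{2*,*}(k)=\Z[\beta]$ with $\beta$ the image of the periodicity element, so that the canonical map is the inclusion $\Z[\beta]\inj\Z[\beta,\beta^{-1}]$. For $t_{\sC\sK}(\eta)$, I would use corollary~\ref{cor:CKComp2}: $\sC\sK^{p,q}(\eta)=0$ for $p>q$, so in the relevant range $2*+1>*+1$ fails only when $*=0$, and $\sC\sK^{1,1}(\eta)\to\sK^{1,1}(\eta)=k(\eta)^\times$ must be checked to be an isomorphism. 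Here I'd use that $\sC\sK^{1,1}$ is computed by $K^{(1)}(\eta)$ via proposition~\ref{prop:Comp1}, and run the same stalk/spectral-sequence argument as in corollary~\ref{cor:CKComp2} to see $\pi_1 K^{(1)}(\eta)=K_1(k(\eta))$. Then $t_{\sC\sK}(\eta)$ is compatible with $t_\sK(\eta)$ under $\rho$, and since $\rho$ is injective on these groups (by corollary~\ref{cor:CKComp2} again, $\sC\sK^{p,q}(\eta)=0$ when $p>q$ kills the relevant slice obstruction) and $t_\sK(\eta)$ is an isomorphism onto $k(\eta)^\times\otimes\Z[\beta,\beta^{-1}]$, the map $t_{\sC\sK}(\eta)$ is an isomorphism onto $k(\eta)^\times\otimes\Z[\beta]$; the only subtlety is checking surjectivity, which follows because $\sC\sK^{2n+1,n+1}(\eta)$ for $n<0$ maps isomorphically to $\sK^{2n+1,n+1}(\eta)$ (by corollary~\ref{cor:CKComp2}, $\rho^{2n+1,n+1}$ is an iso for $2n+1\neq 2,3$, hence for all $n<0$) and for $n\ge0$ the group $\sC\sK^{2n,n}(k)=0$ kills everything except $n=0$.

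For part (3), the identification $\MGL^{2*,*}(k)\cong\L^*$ is Hopkins--Morel--Hoyois in characteristic zero (or one may take it as known input); $\beta$-free, the grading is the one fixed in the introduction. For $t_\MGL(\eta):k(\eta)^\times\to\MGL^{1,1}(\eta)$ I would invoke Morel's computation of $\MGL^{1,1}$ of a field: by the universality of $(\MGL,c_\MGL)$ (remark~\ref{rem:OrientUniv}) and Morel's theorem that $\MGL^{n,n}(F)\cong K^{MW}_{-n}(F)$-type identifications specialize, in degree $(1,1)$ over a field one gets $\MGL^{1,1}(F)\cong F^\times$, with $t_\MGL$ the tautological isomorphism; I'd cite \cite{MorelLec} as in the footnote. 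Surjectivity of $t_\MGL(\eta)$ in all degrees $(2*+1,*+1)$ then follows from the projective bundle/geometric presentation of $\MGL^{*,*}(\eta)$: every class is, modulo the image of $t_\MGL$, detected by its $(0,0)$-component, and the multiplicative structure over $\MGL^{2*,*}(k)=\L^*$ together with $\MGL^{2*+1,*+1}(\eta)$ being generated as an $\L^*$-module by $\MGL^{1,1}(\eta)=k(\eta)^\times$ closes the argument.

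\emph{Main obstacle.} The delicate point throughout is the computation of $\sE^{1,1}(\eta)$ and the identification of $t_\sE$ on it as the tautological map to the unit group — for $\sK$ this is remark~\ref{rem:Conventions}(2) plus $K_1$ of a field, for $\sC\sK$ it requires the homotopy-coniveau model and a connectivity/spectral-sequence argument à la corollary~\ref{cor:CKComp2}, and for $\MGL$ it rests on Morel's $\pi_0$-computations. Once $\sE^{1,1}(\eta)\cong k(\eta)^\times$ is in hand, extending to the $\sE^{2*,*}(k)$-module statement is bookkeeping with Bott periodicity (for $\sK$, $\sC\sK$) or with the $\L^*$-module structure (for $\MGL$), and surjectivity versus isomorphism is controlled by the vanishing ranges in corollaries~\ref{cor:CKComp} and~\ref{cor:CKComp2}.
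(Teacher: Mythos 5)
Your parts (1) and (2) follow essentially the paper's route (identify the coefficient rings via Bott periodicity and corollary~\ref{cor:CKComp}, pin down $t_\sK$ by remark~\ref{rem:Conventions}, and compare $\sC\sK$ with $\sK$ via $\rho$ using vanishing at the generic point in positive weights), but two of your citations in (2) carry more weight than they can bear. Corollary~\ref{cor:CKComp2} is a pure vanishing statement ($\sC\sK^{p,q}(\eta)=0$ for $p>q$); it says nothing about $\rho^{a,b}$ being an isomorphism, so it does not give ``$\rho^{2n+1,n+1}$ is an iso for $2n+1\neq 2,3$''. For weights $b\le 0$ the isomorphism $\sC\sK^{a,b}\cong\sK^{a,b}$ comes from the universal property of $f_0\to\id$ (the suspension spectrum of a smooth scheme is effective), and the borderline case $\sC\sK^{1,1}(\eta)\cong\sK^{1,1}(\eta)$ is lemma~\ref{lem:0slice}, which rests on the identification $s_0\sK\simeq H\Z$. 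Your alternative plan to obtain this case by ``running the same argument as corollary~\ref{cor:CKComp2}'' on $K^{(1)}(\eta)$ does not work as stated: that argument is a connectivity estimate and only yields $\pi_nK^{(q)}(\eta)=0$ for $n<q$; identifying the borderline group $\pi_1K^{(1)}(\eta)$ with $K_1(k(\eta))$ is a further computation (in effect the weight-one slice identification of lemma~\ref{lem:0slice}, or Bloch's formula $\CH^1(F,1)\cong F^\times$). These are fixable slips, since the needed statements are in the paper.

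The genuine gap is in part (3). The substantive claim there --- and the one used later for the presentation of $\Omega_*$ --- is the surjectivity of $t_\MGL(\eta):\L^*\otimes k(\eta)^\times\to\MGL^{2*+1,*+1}(\eta)$ in \emph{all} degrees. Your argument for it is circular: ``$\MGL^{2*+1,*+1}(\eta)$ being generated as an $\L^*$-module by $\MGL^{1,1}(\eta)=k(\eta)^\times$'' is precisely what has to be proved, and neither the projective bundle formula nor an unspecified ``geometric presentation'' of $\MGL^{*,*}$ of a field supplies it (a class in odd bidegree has no ``$(0,0)$-component'' to be detected by). The paper derives it from the Hopkins--Morel spectral sequence \cite{HopkinsMorel,Hoyois}: the slices of $\MGL$ are $\Sigma^{2q,q}H\L_q$, and since $H^i(F,\Z(j))=0$ for $i>j$ when $F$ is a field, the only term that can contribute to $\MGL^{2m+1,m+1}(k(\eta))$ is $\L_m\otimes H^1(k(\eta),\Z(1))=\L_m\otimes k(\eta)^\times$, giving the surjection; combined with the injectivity of $t_\MGL$ in bidegree $(1,1)$, which the paper gets from the comparison $\vartheta_\sK:\MGL\to\sK$ and part (1), this also yields $k(\eta)^\times\cong\MGL^{1,1}(\eta)$. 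Your appeal to Morel-type $K^{MW}$ computations could at best replace the bidegree-$(1,1)$ statement, but it does not touch the higher odd bidegrees; some form of the slice/Hopkins--Morel input is needed there, as in the paper.
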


\begin{proof} We have already seen in remark~\ref{rem:Conventions} that under the isomorphism $\sK^{1,1}(\G_m)\cong K_1(\G_m)$, $t_\sK$ goes to the class of the canonical unit $t$ (or possibly $t^{-1}$).  By functoriality, the map $t_\sK:k(\eta)^\times\to K_1(\eta)=\sK^{1,1}(\eta)$ is the usual isomorphism $k(\eta)^\times\cong K_1(\eta)$ sending $x\in k(\eta)^\times$ to the class of the automorphism $\times x^\epsilon:k(\eta)\to k(\eta)$, where $\epsilon=\pm 1$ is universal choice of sign (independent of $k$ or $\eta$).

The isomorphism $\sK^{2*,*}(k)\cong \Z[\beta,\beta^{-1}]$ follows from the Bott periodicity isomorphism $\sK^{2n,n}(k)\cong K_0(k)\cong \Z$. Since $\beta$ is invertible, the fact that $t_\sK: k(\eta)^\times\to \sK^{1,1}(\eta)$ is an isomorphism implies that $t_\sK(\eta):\sK^{2*,*}(k)\otimes_\Z k(\eta)^\times\to \sK^{2*+1,*+1}(\eta)$ is an isomorphism.

For (2), it follows from the universal  property of $f_0\to\id$ that $\sC\sK^{a,b}(X)\to \sK^{a,b}(X)$ is an isomorphism for all $b\le 0$, $a\in \Z$, $X\in\Sm/k$. In particular,  $\sC\sK^{2b-1,b}(X)=\sK^{2b-1,b}(X)=K_1(X)$ for $b\le0$. For $b>1$, $\sC\sK^{2b-1,b}(\eta)=0$ by corollary~\ref{cor:CKComp2}, and $\sC\sK^{1,1}(X)\to \sK^{1,1}(X)$ is an isomorphism by lemma~\ref{lem:0slice}.  Similarly, the map $\sC\sK^{2n,n}(X)\to \sK^{2n,n}(X)=\Z\beta^{-n}$ is an isomorphism for $n\le 0$ and by corollary~\ref{cor:CKComp2}, $\sC\sK^{2n,n}(\eta)=0$ for $n>0$. Thus the map $\sC\sK^{2*,*}(k)\to \sK^{2*,*}(k)$ identifies  $\sC\sK^{2*,*}(k)$ with the subring $\Z[\beta]$ of $\sK^{2*,*}(k)=\Z[\beta,\beta^{-1}]$. Putting this all together, (1) implies (2).

For (3), the orientation $c_\sK$ gives the canonical morphism of oriented ring $T$-spectra $\vartheta_\sK:\MGL\to \sK$ \cite[theorem1.1]{PaninPimenovRoendigs}, inducing the commutative diagram
\[
\xymatrix{
\sO_X(X)^\times\ar[r]^{t_\MGL}\ar[dr]_{t_\sK}&\MGL^{1,1}(X)\ar[d]^{\vartheta_\sK}\\
&\sK^{1,1}(X).}
\]
As $t_\sK:k(\eta)^\times\to \sK^{1,1}(\eta)=K_1(\eta)$ is an isomorphism, it follows that $t_\MGL:k(\eta)^\times\to \MGL^{1,1}(\eta)$ is injective. The isomorphism $\L^*\to \MGL^{2*,*}(k)$ and the surjectivity of 
 $t_\MGL:\L^*\otimes k(\eta)^\times\to \MGL^{2*+1,*+1}(k(\eta))$ follow from the Hopkins-Morel spectral sequence \cite{HopkinsMorel, Hoyois}.
\end{proof}

\section{Oriented duality theories}\label{sec:Orient}

Recall from \cite[\S 1]{LevineOrient} the category $\SP/k$ of {\em smooth pairs} over $k$, with objects $(M,X)$, $M\in \Sm/k$ and $X\subset M$ a closed subset; a morphism $f:(M,X)\to (N,Y)$ is a morphism $f:M\to N$ in $\Sm/k$ such that $f^{-1}(Y)\subset X$. We let $\Sch/k$ denote the category of quasi-projective $k$-schemes;  for a full subcategory $\sV$ of $\Sch/k$, let $\sV'$  be the subcategory of $\sV$ with the same objects and morphisms the projective morphisms.

Building on work of  Mocanasu \cite{Mocanasu} and Panin \cite{Panin2}, we have defined in \cite[definition 3.1]{LevineOrient} the notion of a bi-graded {\em  oriented duality theory} $(H, A)$ on $\Sch/k$. Here $A$ is a bi-graded oriented cohomology theory on $\SP/k$, $(M,X)\mapsto A_X^{*,*}(M)$, and $H$ is a functor from $\Sch'/k$ to bi-graded abelian groups.  The oriented cohomology theory $A$ satisfies the axioms listed in \cite[definitions 1.2,  1.5]{LevineOrient}. In particular, $(M,X)\mapsto A_X^{*,*}(M)$ admits a long exact sequence 
\[
\ldots\to A_X^{*,*}(M)\to A^{*,*}(M)\to A^{*,*}(M\setminus X)\xrightarrow{\del}A_X^{*+1,*}(M)\to\ldots
\]
where for instance $A^{*,*}(M):=A^{*,*}_M(M)$ and the boundary map $\del$ is part of the data. In addition, there is an excision property and a homotopy invariance property. The ring structure is given by external products and pull-back by the diagonal. The orientation is given by a collection of isomorphisms $\Th^E_X:A_X(M)\to A_X(E)$, for $(M,X)\in \SP/k$ and $E\to M$ a vector bundle, satisfying the axioms of \cite[def. 3.1.1]{Panin2}. We extend some of the results of \cite{Panin2} in \cite[theorem 1.12, corollary 1.13]{LevineOrient} to show that the data of an orientation is equivalent to giving well-behaved push-forward maps $f_*:A_X(M)\to A_Y(N)$ for $(M,X), (N,Y)\in\SP/k$, with the meaning of ``well-behaved" detailed in \cite[\S1]{LevineOrient}.

The homology theory $H$ comes with restriction maps $j^*:H_{*,*}(X)\to H_{*,*}(U)$ for each open immersion $j:U\to X$ in $\Sch/k$, external products $\times:H_{*,*}(X)\otimes H_{*,*}(Y)\to H_{*,*}(X\times Y)$,
 boundary maps $\del_{X,Y}:H_{*,*}(X\setminus Y)\to H_{*-1,*}(Y)$ for each closed subset  $Y\subset X$, , isomorphisms $\alpha_{M,X}:H_{*,*}(X)\to A^{2m-*, m-*}_X(M)$ for each $(M,X)\in \SP/k$, $m=\dim_kM$, and finally cap product maps
\[
f^*(-)\cap:A_X^{a,b}(M)\otimes H_{*,*}(Y)\to H_{*-a, *-b}(Y\cap f^{-1}(X)
\]
for $(M,X)\in \SP/k$, $f:Y\to X$ a morphism in $\Sch/k$. These satisfy a number of axioms and compatibilities (see \cite[\S3]{LevineOrient} for details), which essentially say that a structure for $A^{*,*}_X(M)$ is compatible with the corresponding structure for $H_{*,*}(X)$ via the isomorphism $\alpha_{M,X}$. Roughly speaking, this is saying that a particular structure for  $A^{*,*}_X(M)$ depends only on $X$ and not the choice of embedding $X\hookrightarrow M$.

\begin{rem}\label{rem:FGL} Let $L\to Y$ be a line bundle on some $Y\in\Sm/k$ with 0-section $0:Y\to L$. For an  oriented cohomology theory $A$ one has the element
\[
c_1^A(L):=0^*(0_*(1^A_Y)),
\]
where $1^A_Y\in A^0(Y)$ is the unit element.  As pointed out in \cite[corollary 3.3.8]{Panin2}, or as noted in \cite[remark 1.17]{LevineOrient}, for line bundles $L$, $M$ on some $Y\in\Sm/k$,  the elements $c_1(L), c_1(M)\in A^1(Y)$ are nilpotent, and commute with one another, hence for each power series $F(u,v)\in A^*(k)[[u,v]]$  the evaluation $F(c_1(L), c_1(M))$ gives a well-defined element of $A^*(Y)$. In addition, the cohomology theory $A$ has a unique associated formal group law $F_A(u,v)\in A^*(k)[[u,v]]$ with
\[
F_A(c_1(L), c_1(M))=c_1(L\otimes M)
\]
for all line bundles $L$, $M$ on $Y\in\Sm/k$.

For an $X\in\Sch/k$ with line bundle $L\to X$, the quasi-projectivity of $X$ implies that  $X$ admits a closed immersion $i:X\to M$ for some $M\in \Sm/k$ such that $L$ extends to a line bundle $\sL\to M$. One can then define 
\[
\tilde{c}_1(L):H_*(X)\to H_{*-1}(X)
\]
via the product
\[
(-)\cdot c_1(\sL):A_X^*(M)\to A_X^{*+1}(M)
\]
and the isomorphisms $H_*(X)\cong A^{d_M-*}_X(M)$. One shows that this is independent of the choice of $(M,\sL)$, giving the well-defined operator $\tilde{c}_1(L)$. 
\end{rem} 

The main example of  oriented duality theory $(H, A)$ is given by an oriented $T$-ring spectrum  $\sE$ in $\SH(k)$, assuming $k$ is a field admitting resolution of singularities (e.g., characteristic zero), defined by taking
\[
\sE^{a,b}_X(M):=\Hom_{\SH(k)}(\Sigma^\infty_T(M/M\setminus X), \Sigma^{a,b}\sE),
\]
i.e., the usual bi-graded cohomology with supports. For each $X\in\Sch/k$, choose a closed immersion of $X$ into a smooth $M$ and set $\sE'_{a,b}(X):=\sE^{2m-a,m-b}_X(M)$, where $m=\dim_kM$. The fact that $(M,X)\mapsto \sE^{*,*}_X(M)$ defines an oriented bi-graded ring cohomology theory is proved just as in the case of $\sE=\MGL$, which was discussed in \cite[\S 4]{LevineOrient}; the main point is Panin's theorem \cite[theorem 3.7.4]{Panin2}, which says that an orientation for $\sE$ (in the sense of  remark~\ref{rem:Orient}) defines an orientation in the sense of ring cohomology theories for the bi-graded $\sE$-cohomology with supports. 

The fact that the formula given above for the homology theory $\sE'_{*,*}$ is well-defined and extends to make $(\sE'_{*,*}(-), \sE^{*,*}_{-}(-))$ a bi-graded  oriented duality theory is \cite[theorem 3.4]{LevineOrient}. The essential point is to show that the cohomology with support $\sE^{2d-*, d-*}_X(M)$, for $X\hookrightarrow M$ a closed immersion of some $X$ in a smooth $M$ of dimension $d$, depends (up to canonical isomorphism) only on $X$, and similarly, given a projective morphism $f:Y\to X$ in $\Sch/k$, there are smooth pairs $(M,X)$, $(N,Y)$, an extension of $F$ to a morphism $F:N\to M$ and the map $F_*:\sE_Y^{2d_N-*, d_N-*}(N)\to \sE_X^{2d_M-*, d_M-*}(M)$ is independent  (via the canonical isomorphisms $\sE'_{*,*}(Y)\cong \sE_Y^{2d_N-*, d_N-*}(N)$, 
$\sE'_{*,*}(X)\cong \sE_X^{2d_M-*, d_M-*}(M)$) of the choices. The other structures for  $\sE'_{*,*}(-)$ are defined similarly via the $\sE$-cohomology with supports, and one has the corresponding independence of any choices.

It follows directly from the construction of $\sE'$ that the assignment $(\sE, c_\sE)\mapsto (\sE', \sE)$ is  functorial in the oriented cohomology theory  $(\sE, c_\sE)$.  In particular, let $\ch:\MGL\to\sE$ be a morphism of oriented cohomology theories, that is, $\ch$ is a morphism in $\SH(k)$, compatible with the ring-object structures of $\MGL$ and $\sE$, and compatible with 1st Chern  classes. Then we have an extension of $\ch$ to a natural transformation of oriented duality theories
\[
(\ch',\ch):(\MGL',\MGL)\to (\sE',\sE)
\]

\begin{rem}\label{rem:KThyFGL} As shown in lemma~\ref{lem:unit} , the coefficient rings for $\sK$ and $\sC\sK$ are $\sK^{2*,*}(k)=\Z[\beta, \beta^{-1}]$ and $\sC\sK^{2*,*}(k)=\Z[\beta]$, respectively, with $\beta$ having degree $-1$. For  $\sK$,  the orientation $c_\sK$ restricted to $\P^n$ is given by the class of $1-[\sO(-1)]\in K_0(\P^n)\cong \sK^{2,1}(\P^n)$. It follows (by functoriality and Jouanoulou's trick) that for a line bundle $L$ on some $X\in\Sm/k$, the 1st Chern class is given by $c_1^\sK(L)=\beta^{-1}(1-[L^{-1}])$ (where we consider $1, [L^{-1}]\in \sK^{0,0}(X)=K_0(X)$). A direct calculation gives the formal group law for $(\sK'_{2*,*},\sK^{2*,*})$ as
 $(F_\sK(u,v)=u+v-\beta\cdot uv, \Z[\beta,\beta^{-1}])$. Since the orientation for $\sK$ lifts to that of $\sC\sK$, it follows that the formal group law for  $(\sC\sK'_{2*,*}, \sC\sK^{2*,*})$ is $(u+v-\beta\cdot uv, \Z[\beta])$.
\end{rem}

\section{Algebraic cobordism and oriented duality theories}\label{sec:AlgCobordOrient}
We recall the theory of {\em algebraic cobordism} $X\mapsto \Omega_*(X)$, $X\in \Sch/k$. For each $X\in \Sch/k$, $\Omega_n(X)$ is an abelian group with generators $(f:Y\to X)$, $Y\in \Sm/k$ irreducible of dimension $n$ over $k$ and $f:Y\to X$ a projective morphism.  $\Omega_*$ is the universal {\em oriented Borel-Moore homology theory} on $\Sch/k$; this consists of the data of a functor from $\Sch/k'$ to graded abelian groups, external products, first Chern class operators $\tilde{c}_1(L):\Omega_*(X)\to \Omega_{*-1}(X)$ for $L\to X$ a line bundle, and pull-back maps $g^*:\Omega_*(X)\to \Omega_{*+d}(Y)$ for each \lci morphism $g:Y\to X$ of relative dimension $d$. These of course satisfy a number of compatibilities and additional axioms. 

For an oriented duality theory $(H,A)$ on $\Sch/k$ and $Y$ in $\Sm/k$ of dimension $d$ over $k$, the {\em fundamental class} $[Y]_{H,A}\in H_d(Y)$ is the image of the unit $1_Y\in A^0(Y)$ under the inverse of the isomorphism $\alpha_Y:H_d(Y)\to A^0(Y)$. For an oriented Borel-Moore homology theory $B$ on $\Sch/k$, we similarly have the fundamental class $[Y]_B\in B_d(Y)$ defined by $[Y]_B:=p^*(1)$, where $1\in B_0(\Spec k)$ is the unit and $p:Y\to \Spec k$ the structure morphism.

We recall the following result from \cite{LevineOrient}:

\begin{prop}[\hbox{\cite[propositions 4.2, 4.4, 4.5]{LevineOrient}}] \label{prop:Universal} Let  $k$ be a field admitting resolution of singularities and let $(H,A)$ be a $\Z$-graded oriented duality theory on $\Sch/k$. \\\\
1. There is a unique natural transformation $\vartheta_H:\Omega_*\to H_*$ of functors $\Sch/k'\to \GrAb$, such that $\vartheta_H(Y)$ is compatible with fundamental classes for $Y\in\Sm/k$. In addition, $\vartheta_H$ is compatible with pull-back maps for open immersions in $\Sch/k$,  with 1st Chern class operators,  with external products and with cap products. \\\\
2. For $Y\in\Sm/k$, the map $\vartheta^A(Y):\Omega^*(Y)\to A^*(Y)$ induced by $\vartheta_H$, the identity $\Omega^*(Y)=\Omega_{\dim Y-*}(Y)$ and the isomorphism $\alpha_Y:H_{\dim Y-*}(Y)\to A^*(Y)$  is a ring homomorphism and is compatible with pull-back maps for arbitrary morphisms in $\Sm/k$. Finally, one has
\[
\vartheta^A(Y)(c_1^\Omega(L))=c_1^A(L)
\]
for each line bundle $L\to Y$.
\end{prop}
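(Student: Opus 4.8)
The plan is to obtain the statement as a formal consequence of the universal property of $\Omega_*$ as the universal oriented Borel--Moore homology theory on $\Sch/k$. The first task is to exhibit the homology functor $H_*$ attached to $(H,A)$ as an oriented Borel--Moore homology theory in that sense. Projective push-forward is already part of the datum of $H$, which by hypothesis is a functor on $\Sch/k'$; external products and the first Chern class operators $\tilde{c}_1(L)\colon H_*(X)\to H_{*-1}(X)$ are supplied by the axioms of an oriented duality theory, the latter via the construction recalled in remark~\ref{rem:FGL}. What still has to be built is pull-back $g^{*}\colon H_*(X)\to H_{*+d}(Y)$ along a smooth morphism $g\colon Y\to X$ of relative dimension $d$; from these, \lci pull-backs are then produced by the standard deformation-to-the-normal-cone construction carried out inside the $A$-cohomology with supports. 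To define $g^{*}$ one chooses a smooth pair $(M,X)$, extends $g$ to a smooth morphism $\pi\colon N\to M$ with $N$ smooth over $k$ and $\pi^{-1}(X)=Y$ (the existence of such an extension being part of \cite[\S1]{LevineOrient}, cf.\ \cite{Panin2}), and sets $g^{*}:=\alpha_{N,Y}^{-1}\circ\pi^{*}\circ\alpha_{M,X}$ with $\pi^{*}$ the pull-back on cohomology with supports, the degree shift being forced by $\dim_k N-\dim_k M=d$; one then verifies independence of all choices.

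The core of the argument is then to check that these structures satisfy the axioms of an oriented Borel--Moore homology theory: functoriality of $g^{*}$, the base-change identity $g^{*}f_{*}=(f')_{*}(g')^{*}$ in transverse cartesian squares, the Whitney-type compatibility of $\tilde{c}_1$ with push-forward, the projective bundle formula, and the extended homotopy property. The hard part will be precisely this: it amounts to transporting enough of the Panin--Levine orientation formalism to the homology side to match the presentation of $\Omega_*$ by generators and relations, and it is exactly what occupies \cite[\S1,\S3]{LevineOrient}. Granting it, the universal property of $\Omega_*$ yields a unique morphism $\vartheta_H\colon\Omega_*\to H_*$ of oriented Borel--Moore homology theories; uniqueness is immediate because $\Omega_*(X)$ is generated by the classes $f_{*}([Y]_{\Omega})$ with $f\colon Y\to X$ projective and $Y\in\Sm/k$, while on such a class $\vartheta_H$ is forced to equal $f_{*}([Y]_{H,A})$, $[Y]_{H,A}:=\alpha_Y^{-1}(1^{A}_Y)$. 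By construction $\vartheta_H$ is compatible with fundamental classes, with push-forward, with first Chern class operators, with external products, and with pull-back along smooth morphisms, hence along open immersions; compatibility with \lci pull-backs follows because \lci pull-back is built from smooth pull-back by deformation to the normal cone, an operation $\vartheta_H$ already respects. Compatibility with cap products is not formal but follows from the argument of \cite[\S3,\S4]{LevineOrient}: under the isomorphisms $\alpha$, cap product on $H_*$ by an $A$-cohomology class corresponds to that class's module action, and one reduces the identity to its effect on fundamental classes.

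For part (2), the argument runs as follows. By definition $\vartheta^{A}(Y)=\alpha_Y\circ\vartheta_H(Y)$ under $\Omega^{*}(Y)=\Omega_{\dim Y-*}(Y)$ and $\alpha_Y\colon H_{\dim Y-*}(Y)\to A^{*}(Y)$. The product on $\Omega^{*}(Y)$, respectively $A^{*}(Y)$, is external product followed by pull-back along the diagonal $\delta_Y\colon Y\hookrightarrow Y\times Y$, a regular closed immersion since $Y$ is smooth; as $\vartheta_H$ commutes with external products and with \lci pull-backs and $\alpha$ intertwines these with their $A$-cohomology counterparts, $\vartheta^{A}(Y)$ is a ring homomorphism, unital because $1^{A}_Y=\alpha_Y([Y]_{H,A})$ and $[Y]_{H,A}=\vartheta_H([Y]_{\Omega})$ with $[Y]_{\Omega}$ the unit of $\Omega^{*}(Y)$. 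Any morphism $h\colon Y'\to Y$ in $\Sm/k$ factors as the regular closed immersion $\Gamma_h\colon Y'\hookrightarrow Y'\times Y$ followed by the smooth projection $Y'\times Y\to Y$, both \lci, so compatibility of $\vartheta^{A}$ with $h^{*}$ again reduces to the \lci-compatibility of $\vartheta_H$ together with the intertwining property of $\alpha$. Finally $\vartheta^{A}(Y)(c_1^{\Omega}(L))=c_1^{A}(L)$ holds because $\vartheta_H$ commutes with $\tilde{c}_1(L)$ and, under $\alpha_Y$, the operator $\tilde{c}_1^{A}(L)$ is cup product with $c_1^{A}(L)=0^{*}(0_{*}(1^{A}_Y))$, as recorded in remark~\ref{rem:FGL}.
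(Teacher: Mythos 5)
You should note at the outset that the paper itself gives no proof of this proposition: it is quoted verbatim from \cite[propositions 4.2, 4.4, 4.5]{LevineOrient}, so the comparison is with the argument there. Measured against that, your proposal rests on a step that is neither available nor needed. You propose to upgrade $H_*$ to a full oriented Borel--Moore homology theory in the Levine--Morel sense, with pull-backs for smooth and then for all \lci morphisms, and then to quote the universality of $\Omega_*$ among such theories. But an oriented duality theory supplies no such pull-backs on the homology side, and the mechanism you give for creating them does not work: for $g\colon Y\to X$ smooth and a smooth pair $(M,X)$ there is in general no morphism $\pi\colon N\to M$ with $N\in\Sm/k$, $\pi^{-1}(X)=Y$ and $\pi|_Y=g$ --- the support condition forces $\dim N=\dim M+d$, i.e.\ a ``tubular'' extension of the $X$-scheme $Y$ over the ambient $M$, which need not exist (embedding $Y$ in $\P^n_M$ and shrinking always leaves $\pi^{-1}(X)$ strictly larger than $Y$), and such an extension is certainly not ``part of \cite[\S1]{LevineOrient}'', which concerns push-forwards in cohomology of smooth pairs. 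Even granting it, independence of choices, the base-change and homotopy axioms, and above all the deformation-to-the-normal-cone construction of \lci pull-backs on $H_*$ of \emph{singular} schemes would all have to be built; this is not ``exactly what occupies \cite[\S1,\S3]{LevineOrient}'' --- those sections never construct \lci pull-backs on the homology of singular schemes. The present paper itself is evidence that no such structure is available off the shelf: if $\vartheta_H$ were compatible with \lci pull-backs by general theory, proposition~\ref{prop:LciCompat} and lemma~\ref{lem:DivClass}, where this compatibility is proved by hand for $G_0$ via the divisor-class computation, would be superfluous, and remark~\ref{rem:ConnGThy} would not need to invoke Cai's separate construction of pull-backs.

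The cited proof avoids all of this. Uniqueness is as you say: $\Omega_*(X)$ is generated by classes $f_*([Y]_\Omega)$ with $Y$ smooth and $f$ projective, and compatibility with push-forward and fundamental classes pins $\vartheta_H$ down. For existence one defines $\vartheta_H$ on cobordism cycles by $(f\colon Y\to X;L_1,\dots,L_r)\mapsto f_*\bigl(\tilde{c}_1(L_1)\cdots\tilde{c}_1(L_r)([Y]_{H,A})\bigr)$ and checks that the relations presenting $\Omega_*$ --- which are push-forwards of relations of type (Dim), (Sect), (FGL) formed over \emph{smooth} schemes --- die in $H_*$; this is verified over smooth $W$ through the isomorphism $\alpha_W$ and the corresponding identities in the oriented ring cohomology $A^*(W)$ (nilpotence of first Chern classes, the section axiom coming from the push-forward structure, the formal group law), where all the needed pull-backs exist on the cohomology side. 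Part (2) is likewise obtained by restricting $(H,A)$ to $\Sm/k$: via $\alpha$ this is an oriented cohomology theory in the Levine--Morel sense, with pull-backs for arbitrary morphisms of smooth varieties living on the $A$-side, and $\vartheta^A$ is identified with its classifying map from $\Omega^*$; the identity $\vartheta^A(c_1^\Omega(L))=c_1^A(L)$ then comes out as in your last sentence. So the ring structure and pull-back statements for smooth $Y$ should be routed through the cohomology theory $A$ on $\Sm/k$, not through hypothetical \lci pull-backs on $H_*$ of singular schemes; as written, your argument for both part (1) and part (2) leans on exactly the unproved structure.
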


\begin{rem}\label{rem:CompClassFGL} We have already noted that one has a formal group law $F_A(u,v)\in A^*(k)[[u,v]]$ associated to the oriented cohomology theory $A$. Similarly, for each oriented Borel-Moore homology theory $B$ on $\Sch/k$, there is an associated  formal group law $F_B(u,v)\in B_*(k)[[u,v]]$, characterised by the identity $F_B(c_1(L), c_1(M))=c_1(L\otimes M)$ for each pair of line bundles $L,M$ on some $Y\in Sm/k$ (this follows from \cite[corollary 4.1.8, proposition 5.2.1, proposition 5.2.6]{LevineMorel}).  Letting $\phi_A:\L^*\to A^*(k)$, $\phi_B:\L^*\to B^*(k)$ denote the classifying maps associated to $F_A$, $F_B$, respectively, suppose that $A$ extends to an oriented duality theory $(H,A)$. Then
\begin{equation}\label{eqn:FGLTransform}
\vartheta^A(F_\Omega)=F_A. 
\end{equation}
Indeed, $F_A$ is characterised by identity $F_A(c^A_1(L), c^A_1(M))=c^A_1(L\otimes M)$ for each pair of line bundles $L,M$ on some $Y\in Sm/k$, and since $\vartheta^A(c^\Omega_1(N))=c_1^A(N)$ for each line bundle $N\to Z$, $Z\in \Sm/k$, the fact that $F_\Omega(c^\Omega_1(L), c^\Omega_1(M))=c^\Omega_1(L\otimes M)$ combined with proposition~\ref{prop:Universal}(2) yields the identity \eqref{eqn:FGLTransform}.

Via the universal property of the Lazard ring, the relation \eqref{eqn:FGLTransform} is equivalent to the identity
\begin{equation}\label{eqn:LazardTransform}
\vartheta^A\circ \phi_\Omega=\phi_A.
\end{equation}

Finally, we recall that the classifying map $\phi_\Omega:\L_*\to \Omega_*(k)$ is an isomorphism \cite[theorem 1.2.7]{LevineMorel}.
\end{rem}

\begin{cor} \label{cor:NatTrans} Let $(\sE,c_\sE)$ be pair consisting of a commutative unital monoid object $\sE\in \SH(k)$ with  orientation  $c_\sE$, and let $(\sE'_{*,*}, \sE^{*,*})$ be the corresponding bi-graded oriented duality theory. There is a  unique natural transformation 
\[
\vartheta_{(\sE,c_\sE)}:\Omega_*\to \sE'_{2*,*}
\]
of functors $\Sch/k'\to \GrAb$, such that $\vartheta_{(\sE,c_\sE)}(Y)$ is compatible with fundamental classes for $Y\in\Sm/k$. In addition, $\vartheta_{(\sE,c_\sE)}$ is compatible with pull-back maps for open immersions in $\Sch/k$, 1st Chern class operators, external products and cap products. For $Y\in\Sm/k$, the map $\vartheta^\sE(Y):\Omega^*(Y)\to \sE^{2*,*}(Y)$ induced by $\vartheta_{(\sE,c_\sE)}$ is a ring homomorphism and is compatible with pull-back maps for arbitrary morphisms in $\Sm/k$, and satisfies
\[
\vartheta_{(\sE,c_\sE)}(Y)(c_1^\Omega(L))=c_1^\sE(L)
\]
for each line bundle $L\to Y$.
\end{cor}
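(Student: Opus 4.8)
The plan is to deduce the statement from Proposition~\ref{prop:Universal} by passing to a single grading. Proposition~\ref{prop:Universal} is phrased for a $\Z$-graded oriented duality theory, whereas $(\sE'_{*,*},\sE^{*,*})$ --- constructed in Section~\ref{sec:Orient} from the orientation $c_\sE$ via Panin's theorem \cite[theorem 3.7.4]{Panin2} and \cite[theorem 3.4]{LevineOrient} --- is bi-graded; so the first step is to extract from the latter a $\Z$-graded oriented duality theory $(H_*,A^*)$. The obvious candidate is the ``diagonal'' part: put $A^n_X(M):=\sE^{2n,n}_X(M)$ for $(M,X)\in\SP/k$ and $H_n(X):=\sE'_{2n,n}(X)$ for $X\in\Sch/k$, with products, pull-backs, push-forwards, Thom isomorphisms, Chern class operators, cap products and the comparison isomorphisms $\alpha_{M,X}$ all induced by those of $(\sE'_{*,*},\sE^{*,*})$.

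The point that has to be checked --- essentially the only content of the argument --- is that this prescription really does define a $\Z$-graded oriented duality theory, i.e.\ that every piece of structure of $(\sE'_{*,*},\sE^{*,*})$ preserves the locus where the first index is twice the second. I expect this to be a routine matter of tracking bidegrees: the external and cup products are additive in the bidegree; pull-back is bidegree-preserving in cohomology and is a $(2d,d)$-shift in homology for an \lci morphism of relative dimension $d$; push-forward along a projective morphism is bidegree-preserving in homology; the Thom isomorphism for a rank-$r$ bundle and the operators $(-)\cdot c_1^\sE(L)$, $\tilde{c}_1(L)$ shift the bidegree by $(2r,r)$, $(2,1)$, $(-2,-1)$ respectively; and the cap product $\sE^{a,b}_X(M)\otimes\sE'_{c,d}(Y)\to\sE'_{c-a,d-b}(Y\cap f^{-1}(X))$ carries the diagonal part to the diagonal part. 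Hence the diagonal part inherits the full structure, exactly as for the analogous reduction of $\MGL$ carried out in \cite[\S4]{LevineOrient}, to which I would refer for the remaining bookkeeping; in the cases $\sE=\sK$ and $\sE=\sC\sK$ the resulting formal group law is the one recorded in remark~\ref{rem:KThyFGL}.

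Granting this, the rest is formal. Applying Proposition~\ref{prop:Universal} to $(H_*,A^*)$, part~(1) produces the unique natural transformation $\vartheta_{(\sE,c_\sE)}:=\vartheta_H\colon\Omega_*\to H_*=\sE'_{2*,*}$ of functors $\Sch/k'\to\GrAb$ that is compatible with fundamental classes for $Y\in\Sm/k$, together with the asserted compatibility with pull-back along open immersions, with first Chern class operators, with external products and with cap products (uniqueness being the uniqueness clause of Proposition~\ref{prop:Universal}(1)). Part~(2), via the identifications $A^*(Y)=\sE^{2*,*}(Y)$ and $\alpha_Y\colon\sE'_{2(\dim Y-*),\,\dim Y-*}(Y)\cong\sE^{2*,*}(Y)$, then gives that for $Y\in\Sm/k$ the induced map $\vartheta^\sE(Y)\colon\Omega^*(Y)\to\sE^{2*,*}(Y)$ is a ring homomorphism, compatible with pull-back along arbitrary morphisms in $\Sm/k$, and satisfying $\vartheta^\sE(Y)(c_1^\Omega(L))=c_1^\sE(L)$ for every line bundle $L\to Y$. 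The only genuine obstacle is the bidegree-tracking of the second paragraph; no idea beyond Proposition~\ref{prop:Universal} is needed.
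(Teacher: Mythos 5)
Your proposal is correct and is essentially the argument the paper intends: the corollary is stated without separate proof precisely because it is the application of Proposition~\ref{prop:Universal} to the $\Z$-graded oriented duality theory obtained from the bi-graded theory $(\sE'_{*,*},\sE^{*,*})$ of \cite[theorem 3.4]{LevineOrient} by restricting to bidegrees $(2n,n)$, exactly as you do. Your bidegree bookkeeping (products, push-forwards, Chern class operators, cap products, the isomorphisms $\alpha_{M,X}$ and the unit all preserving the diagonal part) is the only point to check, and it is correct.
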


\begin{rem}\label{rem:Char}  By \cite[lemma 2.5.11]{LevineMorel}, $\Omega_*(X)$ is generated as an abelian group by the cobordism cycles $(f:Y\to X)$, $Y\in\Sm/k$ irreducible, $f:Y\to X$  a projective morphism.  Furthermore,  the identity $(f:Y\to X)=f_*([Y]_\Omega)$ holds in $\Omega_{\dim Y}(X)$. Thus $\vartheta_{(\sE,c_\sE)}$ is characterized by the formula
\[
\vartheta_{(\sE,c_\sE)}(f:Y\to X):=f_*^{\sE'}([Y]_{\sE',\sE}).
\]
\end{rem}

We may apply corollary~\ref{cor:NatTrans}  in the universal case: $\sE=\MGL$ with its canonical orientation. This gives us the natural transformation
\begin{equation}\label{eqn:NatMGL}
\vartheta_\MGL:\Omega_*\to \MGL'_{2*,*}.
\end{equation}

\begin{thm}[\hbox{ \cite[theorem 3.1]{LevineComparison}}] \label{thm:MGLComp} Assume that  $k$ is a field of characteristic zero.  Then the natural transformation \eqref{eqn:NatMGL} is an isomorphism.
\end{thm}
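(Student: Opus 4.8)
The transformation $\vartheta_\MGL$ is already in hand from Corollary~\ref{cor:NatTrans} together with the universality of $\Omega_*$ among oriented Borel--Moore homology theories \cite{LevineMorel}, so the whole point is bijectivity, and the plan is to compare the two theories through their niveau spectral sequences (the filtration by dimension of support), following the strategy of \cite{LevineComparison}. I would arrange that both $\Omega_*(X)$ and $\MGL'_{2*,*}(X)$ carry a strongly convergent niveau spectral sequence --- convergence is automatic, the filtration being finite when $\dim X<\infty$ --- whose $E^1$-page is assembled, up to the standard reindexing, from the values $\Omega_*(\Spec\kappa(x))$, resp.\ $\MGL'_{2*,*}(\Spec\kappa(x))$, at the function fields of the integral subschemes of $X$, and that $\vartheta_\MGL$, being a morphism of oriented Borel--Moore homology theories, induces a morphism of spectral sequences. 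The theorem then reduces to: (a) producing these spectral sequences with the expected $E^1$-terms, and (b) showing that $\vartheta_\MGL$ is an isomorphism on $E^1$, i.e.\ over every finitely generated field extension $F/k$.

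For $\Omega_*$, step (a) is supplied by the Levine--Morel structure theory --- generic constancy, homotopy invariance, and (right-exact) localization --- with $\Omega_*(\Spec\kappa(x))\cong\L_*$. For $\MGL'_{2*,*}$ the essential ingredient is the slice tower $\cdots\to f_{q+1}\MGL\to f_q\MGL\to\cdots\to\MGL$ together with the Hopkins--Morel computation of its layers \cite{HopkinsMorel,Hoyois}: in characteristic zero, $s_q\MGL\cong\Sigma^{2q,q}(H\Z\otimes_\Z\L_q)$ is a wedge of Tate-twisted motivic Eilenberg--MacLane spectra. Since motivic cohomology enjoys the Gersten (Bloch--Ogus--Gabber) resolution and, in each fixed bidegree, only finitely many slices contribute, I would deduce that $\MGL'_{2*,*}$ has a niveau spectral sequence with $E^1$-terms supported on points, compute $\MGL'_{2*,*}(\Spec\kappa(x))\cong\L_*$ directly from the slices, and check --- using that $\vartheta_\MGL$ commutes with the boundary maps of both theories --- that the $d^1$-differentials agree, so that the two spectral sequences genuinely have the same $E^1$-page and $\vartheta_\MGL$ is a map between them.

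For step (b), over $\Spec F$ with $F/k$ finitely generated one has $\Omega_*(\Spec F)\cong\L_*$ (Levine--Morel, \cite[Theorem 1.2.7]{LevineMorel} together with generic constancy) and $\MGL'_{2*,*}(\Spec F)\cong\L_*$ from the Hopkins--Morel spectral sequence, and both identifications are the ones carrying the universal formal group law: for $\Omega_*$ by the very definition of $\L_*$, and for $\MGL$ because its orientation is universal (Remark~\ref{rem:OrientUniv}). Hence the map of coefficient rings induced by $\vartheta_\MGL$ is the composite $\L_*=\Omega_*(\Spec F)\to\MGL'_{2*,*}(\Spec F)=\L_*$, which by the identity $\vartheta^A\circ\phi_\Omega=\phi_A$ recorded in Remark~\ref{rem:CompClassFGL} is the classifying map of the formal group law of $\MGL$; as $\phi_\Omega$ and $\phi_\MGL$ are both isomorphisms, this composite is the identity. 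So $\vartheta_\MGL$ is an isomorphism over $\Spec F$, hence on the $E^1$-pages, and by strong convergence $\vartheta_\MGL(X)$ is an isomorphism for every $X\in\Sch/k$.

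The hard part is the Hopkins--Morel identification of the slices of $\MGL$ in characteristic zero; this is the only genuinely deep input, and it is precisely what turns the $\MGL$ side into a cycle-theoretic object that can be matched with $\Omega_*$. Everything else --- that the slice filtration yields an honest niveau spectral sequence with $E^1$ supported on points, that its $E^1$-page coincides with the Levine--Morel one, and that $\vartheta_\MGL$ respects the differentials --- is essentially bookkeeping inside the oriented-duality-theory formalism of Section~\ref{sec:Orient}, but it must be handled with some care because localization for $\Omega_*$ is only known to be right exact; one either works purely at the level of $E^1$, or else supplements the comparison with resolution of singularities and the generators-and-relations presentation of $\Omega_*$ from \cite{LevineMorel}.
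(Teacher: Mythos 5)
Your plan founders on the $\Omega_*$ side of the comparison. The Levine--Morel theory $\Omega_*$ has no long exact localization sequence: for a closed immersion $i:W\to X$ with open complement $j:U\to X$ one only knows the \emph{right-exact} sequence $\Omega_*(W)\to\Omega_*(X)\to\Omega_*(U)\to 0$, and this is not enough to run the exact-couple construction that produces a niveau spectral sequence. So the first sentence of your step (a) --- ``arrange that both $\Omega_*(X)$ and $\MGL'_{2*,*}(X)$ carry a strongly convergent niveau spectral sequence with $E^1$-terms the values at function fields'' --- is exactly the assertion you cannot make; for $\Omega_*$ such a spectral sequence is not available (indeed, having it in hand is essentially a consequence of the comparison isomorphism you are trying to prove, so the argument as written is either unavailable or circular). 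You do flag the right-exactness issue at the end, but dismissing it as bookkeeping inverts the difficulty: supplying the missing ``exactness at the $E^1$-level'' on the $\Omega$ side is the heart of the proof, not a detail. Your step (b) (comparison over finitely generated fields via \cite[theorem 1.2.7]{LevineMorel}, the Hopkins--Morel computation, and the identity $\vartheta^A\circ\phi_\Omega=\phi_A$ of remark~\ref{rem:CompClassFGL}) is fine, but by itself it does not bridge the gap.

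The proof cited from \cite{LevineComparison}, whose ingredients are recalled in the paper right after the theorem and reused in the proof of theorem~\ref{thm:ClassIso}, replaces the nonexistent spectral sequence for $\Omega_*$ by a hand-made substitute: one works by induction on $d=\dim_k X$ with $\Omega^{(1)}_*(X)=\colim_W\Omega_*(W)$ over proper closed subsets missing the $d$-dimensional generic points, uses the genuine long exact sequence only on the $\MGL'$ side, invokes Hopkins--Morel to get the surjection $t_\MGL(\eta):\L_{*-d+1}\otimes k(\eta)^\times\to\MGL'_{2*+1,*}(k(\eta))$ (lemma~\ref{lem:unit}), and --- this is the key extra construction --- defines an explicit geometric divisor-class map $\Div:\L_{*-d+1}\otimes\oplus_\eta\Z[k(\eta)^\times]\to\Omega^{(1)}_*(X)$ satisfying $\vartheta^{(1)}\circ\Div=\partial\circ\oplus_\eta t_\MGL(\eta)$. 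This yields the four-term exact sequence \eqref{eqn:PresentOmega} on the $\Omega$ side and the commutative ladder \eqref{eqn:HM2}, after which a diagram chase and the induction hypothesis give the isomorphism. In other words, the genuinely deep input is not only the Hopkins--Morel theorem, as you suggest, but also the construction of $\Div$ and the verification that it lifts the $\MGL$-boundary through $\vartheta$; without an analogue of that step your $E^1$-comparison has nothing to compare against on the cobordism side.
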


\begin{rem} This result relies on the Hopkins-Morel spectral sequence, see \cite{HopkinsMorel, Hoyois}.
\end{rem}

In the course of the proof, we proved another result which we will be using here. 

Let $X$ be in $\Sch/k$ and let $d=d_X:=\max_{X'}\dim_kX'$, as $X'$ runs over the irreducible components of $X$. We define $\MGL_{2*,*}^{\prime (1)}(X)$ by
\[
\MGL_{2*,*}^{\prime (1)}(X):=\colim_W \MGL_{2*,*}'(W)
\]
as $W$ runs over all (reduced) closed subschemes of $X$ which contain no dimension $d$ generic point of $X$; $\Omega_*^{(1)}(X)$ is defined similarly. The natural transformation $\vartheta_\MGL$ gives rise to the commutative diagram
\begin{equation}\label{eqn:HM1}
\xymatrix{
\Omega_*^{(1)}(X)\ar[d]_{\vartheta^{(1)}}\ar[r]^-{i_*}&\Omega_*(X)\ar[d]_{\vartheta(X)}\ar[r]^-{j^*}&\oplus_{\eta\in X_{(d)}}\Omega_*(k(\eta))\ar[r]\ar[d]^{\vartheta}&0\\
\MGL_{2*,*}^{\prime (1)}(X)\ar[r]_-{i_*}&\MGL_{2*,*}'(X)\ar[r]_-{j^*}&\oplus_{\eta\in X_{(d)}}\MGL_{2*,*}'(k(\eta))\ar[r]&0}
\end{equation}
with exact rows and with all vertical arrows isomorphisms. As $(\MGL', \MGL)$ is an oriented duality theory, the bottom line extends to the long exact sequence
\begin{multline*}
\ldots\to \oplus_{\eta\in X_{(d)}} \MGL_{2*+1,*}'(k(\eta))\xrightarrow{\del} \MGL_{2*,*}^{\prime (1)}(X)\\\xrightarrow{i_*}\MGL_{2*,*}'(X)\xrightarrow{j^*}\oplus_{\eta\in X_{(d)}}\MGL_{2*,*}'(k(\eta))\to0.
\end{multline*}
Furthermore, the Hopkins-Morel spectral sequence \cite{HopkinsMorel, Hoyois} 
\[
E_2^{p,q}:=\L^{-q}\otimes H^{p-q}(Y,\Z(n+q))\Longrightarrow \MGL^{p+q,n}(Y)
\]
gives a surjection for each $\eta\in X_{(d)}$
\[
t_\MGL(\eta):\L_{*-d+1}\otimes k(\eta)^\times\to  \MGL_{2*+1,*}'(k(\eta))
\]
(see lemma~\ref{lem:unit}). We have constructed in \cite[\S6]{LevineComparison} a group homomorphism
\[
\Div:\L_{*-d+1}\otimes\oplus_{\eta\in X_{(d)}} \Z[k(\eta)^\times]\to \Omega_*^{\prime(1)}(X)
\]
with $\vartheta^{(1)}\circ\Div=\del\circ\oplus_\eta t_\MGL(\eta)$. Since the maps $\vartheta^{(1)}$ and $\vartheta(X)$ are isomorphisms, the map $\Div$ factors through the surjection
\[
\L_{*-d+1}\otimes\oplus_{\eta\in X_{(d)}} \Z[k(\eta)^\times]\to \L_{*-d+1}\otimes\oplus_{\eta\in X_{(d)}}k(\eta)^\times,
\]
we have the exact sequence
\begin{equation}\label{eqn:PresentOmega}
\oplus_{\eta\in X_{(d)}} \L_{*-d+1}\otimes k(\eta)^\times \xrightarrow{\Div} 
 \Omega_{*}^{\prime(1)}(X)
 \xrightarrow{i_*}\Omega_*(X)\xrightarrow{j^*}\oplus_{\eta\in X_{(d)}} \Omega_*(k(\eta))\to 0
 \end{equation}
 and the extension of diagram \eqref{eqn:HM1} to the commutative diagram 
 \begin{equation}\label{eqn:HM2}
\xymatrixcolsep{12pt}
\xymatrix{
\oplus_{\eta} \L_{*-d+1}\otimes k(\eta)^\times\ar[r]^-{\Div}\ar@{=}[d]&\Omega_*^{(1)}(X)\ar[d]_{\vartheta^{(1)}}\ar[r]^-{i_*}&\Omega_*(X)\ar[d]_{\vartheta(X)}\ar[r]^-{j^*}&\oplus_{\eta}\Omega_*(k(\eta))\ar[r]\ar[d]^{\vartheta}&0\\
\oplus_{\eta} \L_{*-d+1}\otimes k(\eta)^\times\ar[r]_-{div_\MGL}&\MGL_{2*,*}^{\prime (1)}(X)\ar[r]_-{i_*}&\MGL_{2*,*}'(X)\ar[r]_-{j^*}&\oplus_{\eta}\MGL_{2*,*}'(k(\eta))\ar[r]&0}
\end{equation}
with exact rows and vertical arrows isomorphisms. Here $div_\MGL:=\del\circ\oplus_\eta t_\MGL(\eta)$.

\section{$K$-theory}

We apply the machinery described in  \S\ref{sec:Orient} to $K$-theory and connective $K$-theory, with the orientations given by remark~\ref{rem:Orient} (for $K$-theory) and lemma~\ref{lem:orient} (for connective $K$-theory). This gives us the  oriented duality theories $(\sK'_{*,*}, \sK^{*,*})$ and $(\sC\sK'_{*,*}, \sC\sK^{*,*})$; the canonical morphism $\sC\sK\to \sK$ is compatible with the orientations and thus gives the map of oriented duality theories $(\sC\sK'_{*,*}, \sC\sK^{*,*})\to (\sK'_{*,*}, \sK^{*,*})$.

We can in fact describe $\sC\sK'_{a,b}(X)$ more explicitly, with the help of the models $K^{(n)}$ for $f_nK$ and Quillen's localization theorem.

Indeed, for a smooth pair $(M,X)$, Quillen's localization theorem gives us the natural homotopy fiber sequence
\[
G(X)\to K(M)\to K(M\setminus X),
\]
where $G(X)$ is the Quillen-Waldhausen $K$-theory spectrum of the abelian category of coherent sheaves on $X$. This, together with ``algebraic Bott-periodicity" 
\[
\sK^{a,b}(X)\cong K_{2b-a}(X)
\]
gives us the canonical identifications
\[
\sK'_{a,b}(X)\cong G_{b-2a}(X);\ \sK^{a,b}_X(M)\cong K^X_{2b-a}(M)
\]
where $ K^X(M)$ is as usual the homotopy fiber of the restriction map $K(M)\to K(M\setminus X)$.

The construction of the simplicial spectrum $m\mapsto K^{(n)}(X,m)$ can be carried through replacing $K$-theory with $G$-theory, that is, the $K$-theory of the category of coherent sheaves. For details, we refer the reader to \cite[\S8.4]{TechLoc} and \cite{LevineAlgKthyI}; for the reader's convenience, we give a brief sketch of the construction here.

We denote the category of coherent sheaves on a scheme $Y$ by $\sM_Y$. Let  $X$ be a finite type $k$-scheme. There is a technical problem in the construction of the simplicial spectrum $m\mapsto G^{(n)}(X,m)$, due to the fact that, for $g:\Delta^p\to \Delta^m$ an inclusion of a face of $\Delta^m$, the corresponding restriction map
\[
(\id_X\times g)^*:\sM_{X\times\Delta^m}\to \sM_{X\times\Delta^p}
\] 
is not exact, hence does not lead to a functor on the corresponding $K$-theory spectra. To get around this problem, one replaces $\sM_{X\times\Delta^m}$ with the full subcategory  $\sM_{X\times\Delta^m,\del}$ of coherent sheaves which are Tor-independent with respect to the inclusions of faces; for $U\subset  X\times\Delta^m$ an open subset, we let $\sM_{U,\del}$ be the similarly defined full subcategory of $\sM_U$.  Letting $G(X,m)$ be the corresponding $K$-theory spectrum of the exact category  $\sM_{X\times\Delta^m,\del}$, we have the simplicial spectrum $m\mapsto G(X,m)$. 

Since we are not assuming that $X$ is equi-dimensional over $k$, it is more convenient to index by dimension rather than codimension.  Let  $\sS^X_n(m)$ denote the set of closed subsets $W$ of $X\times \Delta^m$ such that
\[
\dim W\cap X\times F\le n+p
\]
for all faces $F\cong \Delta^p$ of $\Delta^m$, and let
\[
G_{(n)}(X,m):=\hocolim_{W\in \sS^X_n(m)}G_W(X,m)
\]
where $G_W(X,m)$ is by definition the homotopy fiber of the restriction map 
\[
K(\sM_{X\times\Delta^m,\del})\to K(\sM_{X\times\Delta^m\setminus W,\del}).
\]
This gives us the simplicial spectrum $m\mapsto G_{(n)}(X,m)$ with total spectrum $G_{(n)}(X)$, and the corresponding tower of spectra
\[
\ldots\to G_{(n)}(X)\to G_{(n+1)}(X)\to\ldots\to G_{(\dim X)}(X)=G_{(\dim X+1)}(X,m)=\ldots
\]
The identification of $G(X):=K(\sM_X)$ with the 0-simplices in $G_{(\dim X)}(X)$ gives us the map of spectra
\[
G(X)\to G_{(\dim X)}(X),
\]
which, using the homotopy invariance property of $G$-theory, is easily seen to be a weak equivalence. 

It is easy to see that $X\mapsto G_{(n)}(X)$ is contravariantly functorial with respect to flat maps of finite type $k$-schemes. As for the usual $G$-theory construction, there is a canonical push-forward functor
\[
i_*:G_{(n)}(W)\to G_{(n)}(X)
\]
for $i:W\to X$ a closed immersion; this extends to a push forward map for arbitrary projective morphisms, but only in the homotopy category $\SH$ as one needs to replace the various categories of coherent sheaves with the corresponding subcategories which have no higher direct images with respect to the given projective morphism (see  \cite[\S 4]{LevineAlgKthyI} for details).

The localization theorem for the spectra $G_{(n)}$ from \cite[corollary 8.12]{TechLoc} is
\begin{thm} Let $j:U\to X$ be an open immersion of finite type $k$-schemes with closed complement $i:W\to X$. Then the sequence
\[
G_{(n)}(W)\xrightarrow{i_*} G_{(n)}(X)\xrightarrow{j^*} G_{(n)}(U)
\]
is a  homotopy fiber sequence.
\end{thm}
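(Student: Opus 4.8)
The plan is to prove the fibre sequence one simplicial degree at a time and then pass to total spectra, reducing the degreewise statement to the localization theorem for $G$-theory (for the fibre term) and to a moving lemma (to recognise the base term). Throughout I write $T$ for a support in the posets $\sS^X_n(m)$, reserving $W$ for the closed complement of $U$ as in the statement. Fix $m\ge0$, and for $T\in\sS^X_n(m)$ put $T_W:=T\cap(W\times\Delta^m)$ and $T_U:=T\cap(U\times\Delta^m)$; since the defining dimension bounds only become more restrictive upon intersecting with a closed subset, $T_W\in\sS^W_n(m)$ and $T_U\in\sS^U_n(m)$. The first step is categorical: one shows that the full subcategory of $\sM_{X\times\Delta^m,\del}$ of sheaves supported on $W\times\Delta^m$ is a Serre subcategory, that localising it away --- i.e.\ restricting to $U\times\Delta^m$ --- identifies the quotient with $\sM_{U\times\Delta^m,\del}$, and that the same holds after cutting supports down to lie in a fixed closed subset $T$ of $X\times\Delta^m$. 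The only nontrivial point is compatibility with the Tor-independence decoration: restriction to $U\times\Delta^m$ commutes with restriction to faces, hence preserves Tor-independence, while the extension of a Tor-independent sheaf on $U\times\Delta^m$ to one on $X\times\Delta^m$ modulo sheaves supported on $W\times\Delta^m$ is the technical lemma of \cite[\S8]{TechLoc}.

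Granting this, the localization theorem for the $G$-theory of these ($\del$-decorated) categories of coherent sheaves --- established in \cite[\S8]{TechLoc} on the model of Quillen's and Waldhausen's localization theorems --- identifies $G_T(X,m)$ with the $K$-theory of the category $\sM^T_{X\times\Delta^m,\del}$ of $\del$-decorated sheaves supported on $T$, and, applied to the subcategory of objects supported on $T_W$, yields a homotopy fibre sequence
\[
K\big(\sM^{T_W}_{X\times\Delta^m,\del}\big)\to G_T(X,m)\to G_{T_U}(U,m),
\]
natural in $T$. Relative dévissage --- filter a $\del$-decorated sheaf supported on $W\times\Delta^m$ by powers of the ideal of $W_\red\times\Delta^m$, checking compatibility of the filtration with Tor-independence along faces, again as in \cite[\S8]{TechLoc} --- identifies the left-hand term with $G_{T_W}(W,m)$. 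Thus, degreewise in $m$ and naturally in $T$, we have a homotopy fibre sequence $G_{T_W}(W,m)\to G_T(X,m)\to G_{T_U}(U,m)$.

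Taking the homotopy colimit over $T\in\sS^X_n(m)$ and using that $\Spt$ is stable, so homotopy colimits preserve homotopy fibre (= cofibre) sequences, one gets a homotopy fibre sequence
\[
\hocolim_{T\in\sS^X_n(m)}G_{T_W}(W,m)\to G_{(n)}(X,m)\to\hocolim_{T\in\sS^X_n(m)}G_{T_U}(U,m).
\]
The functor $T\mapsto T_W$, $\sS^X_n(m)\to\sS^W_n(m)$, is cofinal: given $T'\in\sS^W_n(m)$, the subset $T'\subseteq X\times\Delta^m$ (closed, since $W$ is closed in $X$) lies in $\sS^X_n(m)$ --- the dimension bounds are unchanged since $T'\cap X\times F=T'\cap W\times F$ --- and restricts to $T'$ over $W$. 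So the left-hand homotopy colimit is $G_{(n)}(W,m)$. Passing to total spectra, which preserves degreewise homotopy fibre sequences, yields a homotopy fibre sequence
\[
G_{(n)}(W)\to G_{(n)}(X)\to\Big|\,m\mapsto\hocolim_{T\in\sS^X_n(m)}G_{T_U}(U,m)\,\Big|,
\]
whose first two maps agree with $i_*$ and $j^*$ by construction.

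The remaining, and hardest, step is to identify the third term with $G_{(n)}(U)=\big|\,m\mapsto\hocolim_{T'\in\sS^U_n(m)}G_{T'}(U,m)\,\big|$ via the natural map induced by $T\mapsto T_U$. The functor $T\mapsto T_U$, $\sS^X_n(m)\to\sS^U_n(m)$, is \emph{not} cofinal degreewise: the closure in $X\times\Delta^m$ of an admissible support $T'\in\sS^U_n(m)$ may acquire components over $W$ whose intersections with the smaller faces violate the dimension bounds, so there need be no $T\in\sS^X_n(m)$ with $T'\subseteq T$. This is the coherent-sheaf counterpart of Bloch's moving lemma for the higher cycle complex, and it is what makes the theorem substantial: following the moving technique of \cite{TechLoc} one constructs an explicit homotopy --- built from a family of correspondences deforming supports in $U\times\Delta^\bullet$, together with a spreading step that controls the behaviour near $W$ --- exhibiting $m\mapsto\hocolim_{T\in\sS^X_n(m)}G_{T_U}(U,m)$ as weakly equivalent, on total spectra, to $m\mapsto\hocolim_{T'\in\sS^U_n(m)}G_{T'}(U,m)=G_{(n)}(U)$. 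With this in hand the previous fibre sequence reads $G_{(n)}(W)\xrightarrow{i_*}G_{(n)}(X)\xrightarrow{j^*}G_{(n)}(U)$, as asserted.
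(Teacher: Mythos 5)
The paper offers no proof of this statement at all: it is quoted from \cite[corollary 8.12]{TechLoc}, so the only meaningful comparison is with the argument given there. Your outline reproduces the architecture of that argument correctly. Degreewise in $m$ and for a fixed support $T\in\sS^X_n(m)$, localization plus d\'evissage does give a fibre sequence $G_{T_W}(W,m)\to G_T(X,m)\to G_{T_U}(U,m)$; the colimit over the filtered poset $\sS^X_n(m)$ and the cofinality of $T\mapsto T_W$ (which even admits the section $T'\mapsto T'$) then yield a fibre sequence with third term $\hocolim_{T\in\sS^X_n(m)}G_{T_U}(U,m)$. You have also correctly located where the theorem actually lives: supports in $\sS^U_n(m)$ need not extend to admissible supports on $X$ (their closures can meet $W\times F$ in excess dimension over small faces), so $T\mapsto T_U$ is not cofinal and a moving argument is unavoidable. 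This is exactly the shape of the proof in \cite{TechLoc}.

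The gap is that this moving step is the entire substance of the theorem, and you do not prove it: the sentence about ``a family of correspondences deforming supports\dots together with a spreading step'' is an appeal to the technique of \cite{TechLoc}, not an argument, so as a self-contained proof the crux is missing. (Given that the paper itself simply cites the result, this is not a defect relative to the paper, but it should be stated as a citation rather than presented as a construction.) A smaller point needing repair: the sheaves in $\sM_{X\times\Delta^m,\del}$ supported on $W\times\Delta^m$ do not form a Serre subcategory in any literal sense, since $\sM_{X\times\Delta^m,\del}$ is only an exact category and quotients, filtration steps (e.g.\ by powers of the ideal of $W_\red$) and extensions-off-$U$ of Tor-independent sheaves need not remain Tor-independent; so your Serre-quotient and decorated d\'evissage steps as stated would fail. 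The clean route, and the one used in \cite{TechLoc} (see lemma 8.7 there, which this paper also invokes), is to first remove the decoration via the $K$-theory equivalence $K(\sM_{U',\del})\simeq K(\sM_{U'})$ for all relevant opens $U'$, and only then apply Quillen localization and d\'evissage to the honest categories of coherent sheaves, identifying $G_T(X,m)\simeq K(\sM_T)$, $G_{T_W}(W,m)\simeq K(\sM_{T_W})$, $G_{T_U}(U,m)\simeq K(\sM_{T_U})$, after which the degreewise fibre sequence is ordinary Quillen localization for $T_W\subset T\supset T_U$.
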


\begin{cor}\label{cor:Computation2} Let $X$ be  in $\Sch/k$.\\\\
1. There is a canonical isomorphism
\[
a^X_{a,b}:\sC\sK'_{a,b}(X)\to\pi_{a-2b}(G_{(b)}(X))
\]
compatible with  push-forward for projective morphisms and pull-back for open immersions.
In particular, we have isomorphisms
\[
a^X_{2b,b}: \sC\sK'_{2b,b}(X)\cong\pi_{0}(G_{(b)}(X))
\]
for all $b\in\Z$, compatible with  push-forward for projective morphisms and pull-back for open immersions.  \\\\
2. For $X\in\Sm/k$ of dimension $d$, the isomorphism $a^X_{a,b}$ is compatible with the isomorphism of proposition~\ref{prop:Comp1} via the weak equivalence
\[
G_{(b)}(X))\to K^{(d-b)}(X))
\]
and the isomorphism $\alpha_X:\sC\sK^{2d-a,d-b}(X)\to \sC\sK'_{a,b}(X)$.
\end{cor}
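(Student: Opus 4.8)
The plan is to transport the abstract comparison isomorphism between $f_q K$ and the homotopy coniveau model $K^{(q)}$ through the definition of the oriented duality theory $(\sC\sK'_{*,*},\sC\sK^{*,*})$, and then dualize via Quillen's localization theorem to reexpress everything in $G$-theory. For part~(1), I would first fix, for each $X\in\Sch/k$, a closed embedding $i\colon X\hookrightarrow M$ into a smooth $M$ of dimension $d_M$, so that by definition $\sC\sK'_{a,b}(X)=\sC\sK^{2d_M-a,\,d_M-b}_X(M)$. By Proposition~\ref{prop:Comp1} applied to the two-term diagram $M/(M\setminus X)$ of Example~\ref{ex:Supports}, this cohomology-with-supports group is canonically $\pi_{2(d_M-b)-(2d_M-a)}K^{(d_M-b)}(M/M\setminus X)=\pi_{a-2b}K^{(d_M-b)}(M/M\setminus X)$, i.e.\ $\pi_{a-2b}$ of the homotopy fiber of $K^{(d_M-b)}(M)\to K^{(d_M-b)}(M\setminus X)$. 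The heart of the argument is then to identify this homotopy fiber with $G_{(b)}(X)$: on the level of the simplicial spectra $m\mapsto(-)(X,m)$, Quillen's localization theorem gives a homotopy fiber sequence $G_W(X,m)\to K(\sM_{M\times\Delta^m,\partial})\to K(\sM_{(M\setminus X)\times\Delta^m,\partial})$ for closed $W$, and one checks that the codimension condition "$\codim_{M\times F}(W\cap M\times F)\ge d_M-b$" defining $\sS^{d_M-b}_M(m)$ matches, under intersection with $X$, the dimension condition "$\dim(W'\cap X\times F)\le b+\dim F$" defining $\sS^X_b(m)$ — this is just the identity $\dim X - \codim_M(\text{stuff})$ bookkeeping, using $\dim X\le d_M$. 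Taking hocolim over supports and totalizing the simplicial directions then yields the weak equivalence $\fib(K^{(d_M-b)}(M)\to K^{(d_M-b)}(M\setminus X))\simeq G_{(b)}(X)$, and composing gives $a^X_{a,b}$.

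Next I would verify independence of the choice of $(M,i)$: this follows because the left-hand side $\sC\sK'_{a,b}(X)$ is independent of $(M,i)$ by the general theory of oriented duality theories (\cite[theorem 3.4]{LevineOrient}, recalled in \S\ref{sec:Orient}), while the right-hand side $\pi_{a-2b}G_{(b)}(X)$ does not involve $M$ at all; one needs to check the identification is the \emph{same} for two embeddings, which I would do by embedding diagonally into $M\times M'$ and using homotopy invariance of $G$-theory (exactly as in the proof that $G(X)\to G_{(\dim X)}(X)$ is an equivalence) together with the Thom-isomorphism compatibility built into the orientation. Compatibility with push-forward along closed immersions $W\hookrightarrow X$ comes from the functorial push-forward $i_*\colon G_{(b)}(W)\to G_{(b)}(X)$ and the corresponding support-extension map on the $\sC\sK$ side; for general projective morphisms one passes to $\SH$ and uses the no-higher-direct-image subcategories as in \cite[\S4]{LevineAlgKthyI}, matching the push-forward in the oriented duality theory. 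Compatibility with pull-back by open immersions is immediate from the flat functoriality of $G_{(b)}$ and of cohomology with supports. Specializing to $a=2b$ gives the displayed isomorphism $\sC\sK'_{2b,b}(X)\cong\pi_0 G_{(b)}(X)$.

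For part~(2), when $X\in\Sm/k$ is smooth of dimension $d$ one may take $M=X$, so $d_M=d$ and the isomorphism of part~(1) reads $\sC\sK'_{a,b}(X)\cong\pi_{a-2b}K^{(d-b)}(X)$, while Proposition~\ref{prop:Comp1} gives $\sC\sK^{2d-a,d-b}(X)\cong\pi_{a-2b}K^{(d-b)}(X)$; the identification $\alpha_X\colon\sC\sK^{2d-a,d-b}(X)\to\sC\sK'_{a,b}(X)$ of the duality theory corresponds under these to the identity, and the weak equivalence $G_{(b)}(X)\to K^{(d-b)}(X)$ is the one just constructed with $W=X$ empty complement (it is the map induced by the equivalence $G(X)\simeq K(X)$ on a smooth scheme, refined to the coniveau/dimension towers). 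So part~(2) is essentially a matter of unwinding definitions once part~(1) is in place.

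The main obstacle I anticipate is the careful matching of the two support filtrations and the handling of the Tor-independence subcategories $\sM_{-,\partial}$: one must check that intersecting a codimension-$\ge(d_M-b)$ cycle in $M\times\Delta^m$ (good for all faces) with $X$ produces exactly the dimension-$\le(b+\dim F)$ condition on $X\times\Delta^m$, including the edge cases where components of $X$ have dimension strictly less than $d_M$, and that the localization fiber sequences for $K(\sM_{-,\partial})$ assemble compatibly with hocolim over supports and with totalization. This is the technical content underlying \cite[\S8.4]{TechLoc} and \cite{LevineAlgKthyI}, and the real work is citing/adapting it correctly rather than any new idea; the independence-of-embedding and push-forward compatibilities are then formal consequences of the oriented duality theory formalism of \S\ref{sec:Orient}.
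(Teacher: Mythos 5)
Your proposal is correct and follows essentially the same route as the paper: express $\sC\sK'_{a,b}(X)$ via Proposition~\ref{prop:Comp1} and Example~\ref{ex:Supports} as $\pi_{a-2b}$ of the fiber of $K^{(m-b)}(M)\to K^{(m-b)}(M\setminus X)$ for a chosen closed embedding $X\hookrightarrow M$, identify that fiber with $G_{(b)}(X)$ using Quillen localization and the resolution theorem, and then verify the functorialities, with part (2) an unwinding of the construction. The only differences are organizational: the paper packages the identification as $\delta_{X,M}=\beta_{X,M}^{-1}\circ\gamma_{X,M}$, invoking the resolution theorem on $M$ and $M\setminus X$ together with the already-stated localization theorem for the $G_{(b)}$-spectra from \cite{TechLoc} instead of your levelwise support-matching (which in effect re-derives that localization statement), and it checks projective push-forward compatibility by factoring through closed immersions and $\P^n$-projections via the projective bundle formula rather than appealing to the general machinery of \cite{LevineAlgKthyI}.
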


\begin{proof}  For each $q$, let $K^{(q)}_X(M)$ denote the homotopy fiber of the restriction map
\[
j^*:K^{(q)}(M)\to K^{(q)}(M\setminus X).
\]
Using example~\ref{ex:Supports} and proposition~\ref{prop:Comp1} for the diagram
\[
\xymatrix{
M\setminus X\ar[r]\ar[d]&M\\
{*}}
\]
gives us the canonical isomorphism
\[
\sC\sK'_{a,b}(X)\xymatrix{\ar[r]^{\alpha_{M,X}}_\sim&} \sC\sK^{2m-a,m-b}_X(M)\cong \pi_{a-2b}(K^{(m-b)}_X(M)).
\]
By the localization theorem, we have the canonical weak equivalence
\[
\beta_{X,M}:G_{(b)}(X)\to \fib(G_{(b)}(M)\to G_{(b)}(M\setminus X))
\]
induced by the functors $(i_X\times\id)_*:\sM_{X\times\Delta^n}\to \sM_{M\times\Delta^n}$ and the canonical isomorphism of $(j_{M\setminus X}\times\id)^*\circ(i\times\id)_*$ with the 0-functor. The resolution theorem gives a canonical weak equivalence
\begin{multline*}
K^{(m-b)}_X(M)= \fib(K^{(m-b)}(M)\to K^{(m-b)}(M\setminus X))\\
\xrightarrow{\gamma_{X,M}}  \fib(G_{(b)}(M)\to G_{(b)}(M\setminus X)).
\end{multline*}
This defines the isomorphism in $\SH$
\[
\delta_{X,M}:=\beta_{X,M}^{-1}\circ\gamma_{X,M}:K^{(m-b)}_X(M)\to G_{(b)}(X).
\]

Both $\gamma_{X,M}$ and $\beta_{X,M}$ are compatible with pull-back by flat maps $f:M'\to M$ (with $X':=f^{-1}(X)$), hence $\delta$ is compatible with pull-back by open immersions. 

Suppose we have a closed immersion $i:X'\to X$. We use the same ambient smooth scheme $M$, giving the map of pairs $\id_M:(X, M)\to(X',M)$. The push-forward by $i$ on $K^{(m-b)}_X(M)$ is by definition the map on the homotopy fibers induced by the commutative diagram
\[
\xymatrix{
K^{(m-b)}(M)\ar[r]^-{u^*} \ar[d]_{\id}&K^{(m-b)}(M\setminus X'))\ar[d]^{w^*}\\
K^{(m-b)}(M)\ar[r]^-{v^*} &K^{(m-b)}(M\setminus X'))
}
\]
where $u, v, w$ are the evident open immersions. It is not hard to check that the diagram
\[
\xymatrix{
G_{(b)}(X')\ar[r]^-{i_{X'*}}\ar[d]_{i_*}& G_{(b)}(M)\ar[r]^-{u^*}\ar[d]_{\id}& G_{(b)}(M\setminus X'))\ar[d]^{w^*}\\
G_{(b)}(X)\ar[r]_-{i_{X*}}& G_{(b)}(M)\ar[r]_-{v^*}& G_{(b)}(M\setminus X))
}
\]
commutes up to canonical homotopy. From this it follows that $i^G_*\circ \delta_{X,M}=\delta_{X',M}\circ i^K_*$. 

For both $K$-theory and $G$-theory, the push-forward for a projection $p:Y\times\P^n\to Y$ is compatible with the projection onto the factor $[\sO_{Y\times\P^n}]$ in the respective projective bundle formulas with bases $[\sO(-i)]$, $i=0,\ldots, n$. From this it is not hard to show that $\delta_{X,M}$ is compatible with $p_*$, and hence compatible with push-forward by an arbitrary projective morphism, completing the proof of (1).

The assertion (2) follows directly from the construction of $\delta_{X,M}$.
\end{proof}

\begin{rem} Of course, Quillen's localization theorem tells us that the homology theory $\sK'_{*,*}$ associated to $K$-theory is just $G$-theory. It follows from corollary~\ref{cor:Computation2} that the canonical natural transformation $\sC\sK'_{a,b}\to \sK'_{a,b}$ is given by the  map
\[
\pi_{a-2b}G_{(b)}(X)\to \pi_{a-2b}G(X)=G_{a-2b}(X)
\]
induced by the canonical map $G_{(b)}X()\to G_{(dim X)}(X)\sim G(X)$.
\end{rem}

Our main interest is in the ``geometric" portion $\sC\sK'_{2*,*}$ of the theory $\sC\sK'_{*,*}$. For a finite type $k$-scheme $X$, and integer $b\ge0$, we have the full subcategory $\sM_{(b)}(X)$ of $\sM_X$ with objects the coherent sheaves $\sF$ on $X$ with $\dim_k\supp \sF\ge b$. The following result improves upon corollary~\ref{cor:CKComp}:

\begin{thm}\label{thm:ConnK0} Let $X$ be a finite type $k$-scheme. There is a canonical isomorphism
\[
a^X_n:\sC\sK'_{2n,n}(X)\to \im(K_0(\sM_{(n)}(X)\to \sM_{(n+1)}(X)),
\]
compatible with pull-back by open immersions and push-forward by projective morphisms. Furthermore $\sC\sK'_{a,b}(X)=0$ for $a<2b$.
\end{thm}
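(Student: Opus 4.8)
The plan is to work with the explicit model of Corollary~\ref{cor:Computation2}, giving natural isomorphisms $\sC\sK'_{a,b}(X)\cong\pi_{a-2b}(G_{(b)}(X))$, and to read off the answer from the simplicial spectrum $m\mapsto G_{(b)}(X,m)$ whose total spectrum is $G_{(b)}(X)$. First I would check that $G_{(b)}(X)$ is $(-1)$-connected: each $G_{(b)}(X,m)=\hocolim_{W\in\sS^X_b(m)}G_W(X,m)$ is a filtered homotopy colimit of the spectra $G_W(X,m)$, and by the localization theorem for the $G_{(n)}$ these are $K$-theory spectra of exact categories of coherent sheaves on $X\times\Delta^m$ supported on $W$, hence $(-1)$-connected; so each $G_{(b)}(X,m)$ is $(-1)$-connected, and the first-quadrant spectral sequence $E^1_{p,q}=\pi_q G_{(b)}(X,p)\Rightarrow\pi_{p+q}G_{(b)}(X)$ propagates this to $G_{(b)}(X)$. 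This yields at once the vanishing $\sC\sK'_{a,b}(X)=0$ for $a<2b$, and identifies $\pi_0 G_{(n)}(X)$ with the cokernel of $d_0-d_1\colon\pi_0 G_{(n)}(X,1)\to\pi_0 G_{(n)}(X,0)$. As $K_0$ commutes with filtered colimits of exact categories along exact inclusions, $\pi_0 G_{(n)}(X,0)=K_0(\sM_{(n)}(X))$, while $\pi_0 G_{(n)}(X,1)$ is the $K_0$ of the category of coherent sheaves $\sF$ on $X\times\Delta^1$ with $\dim\supp\sF\le n+1$ and $\dim\supp(\sF|_{X\times v_i})\le n$ ($v_0,v_1$ the two vertices), Tor-independent with respect to the vertex inclusions, with $d_0-d_1$ given by $[\sF]\mapsto[\sF|_{X\times v_1}]-[\sF|_{X\times v_0}]$. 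Hence $\sC\sK'_{2n,n}(X)=K_0(\sM_{(n)}(X))/\Lambda$ with $\Lambda:=\im(d_0-d_1)$.

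It now suffices to show $\Lambda=\ker\iota$, where $\iota\colon K_0(\sM_{(n)}(X))\to K_0(\sM_{(n+1)}(X))$ is induced by the inclusion of categories; this exhibits the cokernel as $\im\iota$. To describe $\ker\iota$ I would apply Quillen's localization and dévissage theorems to $\sM_{(n)}(X)\subset\sM_{(n+1)}(X)$: the $K$-theory of the quotient exact category is $\bigvee_{x\in X_{(n+1)}}K(k(x))$, so $\ker\iota$ is the image of the boundary $\partial\colon\bigoplus_{x\in X_{(n+1)}}k(x)^\times\to K_0(\sM_{(n)}(X))$, that is, the subgroup generated by the divisor classes $\partial(f)=[\sO_{\mathrm{div}_0(f)}]-[\sO_{\mathrm{div}_\infty(f)}]$ over $V=\overline{\{x\}}$ integral of dimension $n+1$ and $f\in k(V)^\times$.

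For $\Lambda\subseteq\ker\iota$: given $\sF$ as above, extend it to a coherent sheaf $\bar\sF$ on $X\times\P^1$ supported on the closure $\overline{\supp\sF}$ (so still of dimension $\le n+1$); for $i=0,1$, tensoring the Koszul resolution $0\to\sO(-1)\xrightarrow{s_i}\sO\to\sO_{X\times v_i}\to 0$ of $X\times v_i$ in $X\times\P^1$ by $\bar\sF$ stays exact by Tor-independence, and applying the proper pushforward $Rp_*$ along $p\colon X\times\P^1\to X$ expresses $[\sF|_{X\times v_i}]$ in $K_0(\sM_{(n+1)}(X))$ as an alternating sum of classes of the $R^jp_*\bar\sF$ and $R^jp_*\bar\sF(-1)$, visibly independent of $i$; hence $\iota\bigl([\sF|_{X\times v_1}]-[\sF|_{X\times v_0}]\bigr)=0$. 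For the reverse inclusion: given $V$ and $f$, let $\Gamma\subset X\times\P^1$ be the closure of the graph of $f\colon V\dashrightarrow\P^1$ and apply $\id_X\times\phi$ with $\phi\in\PGL_2$ satisfying $\phi(0)=0$ and $\phi(\infty)=1$; the resulting $\tilde\Gamma$ is integral of dimension $n+1$ and dominates $\P^1$, so $\sO_{\tilde\Gamma}$ is automatically Tor-independent over $v_0,v_1$, and $\sF:=\sO_{\tilde\Gamma\cap(X\times\A^1)}$ lies in the category computing $\pi_0 G_{(n)}(X,1)$ with $[\sF|_{X\times v_1}]-[\sF|_{X\times v_0}]=[\sO_{\mathrm{div}_\infty(f)}]-[\sO_{\mathrm{div}_0(f)}]=-\partial(f)$. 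Therefore $\Lambda=\ker\iota$, and $a^X_n$ is the isomorphism $\sC\sK'_{2n,n}(X)\xrightarrow{\sim}\im\iota$ induced by the presentation; compatibility with pull-back for open immersions and push-forward for projective morphisms follows from the corresponding properties of $a^X_{2n,n}$ in Corollary~\ref{cor:Computation2} and the naturality of all the maps involved.

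I expect the main difficulty to be the geometric bookkeeping in the last paragraph: producing the extension $\bar\sF$ without enlarging the support over $X\times\{\infty\}$, checking that the $\del$-condition (Tor-independence with respect to faces) is exactly what the colimit over admissible supports yields and that it is preserved under the operations used, and --- for $X$ not assumed equidimensional or reduced --- confirming that Quillen dévissage does identify $\ker\iota$ with the divisor subgroup. These points are routine in spirit but need care.
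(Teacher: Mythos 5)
Your proposal is correct and follows essentially the same route as the paper: the explicit model $\sC\sK'_{a,b}(X)\cong\pi_{a-2b}G_{(b)}(X)$, the $(-1)$-connectivity and spectral sequence giving both the vanishing for $a<2b$ and the presentation of $\pi_0G_{(n)}(X)$ as a cokernel, Quillen localization plus d\'evissage for $\sM_{(n)}(X)\subset\sM_{(n+1)}(X)$ to describe $\ker\iota$ by divisor classes, and the graph-closure construction (your $\PGL_2$ move is just a different identification of $\Delta^1$ inside $\P^1$ than the paper's $\P^1\setminus\{1\}$) for the reverse inclusion. The only local deviation is the inclusion $\Lambda\subseteq\ker\iota$, where you extend $\sF$ to $X\times\P^1$ and use the Koszul resolution of the vertex fibers together with $Rp_*$, while the paper instead pushes the support into $Y\times\A^1$ with $\dim Y\le n+1$ and invokes homotopy invariance of $G$-theory; both arguments are sound and of comparable length.
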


\begin{proof} By corollary~\ref{cor:Computation2}, we have
\[
\sC\sK'_{a,b}(X)\cong \pi_{a-2b}(G_{(b)}(X)).
\]
Moreover, $G_{(b)}(X)$ is the simplicial spectrum $m\mapsto G_{(b)}(X,m)$ and 
\[
G_{(b)}(X,m)=\hocolim_{W\in\sS_{(b)}^X(m)}G_W(\sM(X\times\Delta^m,\del)).
\]
We showed in \cite[lemma 8.7]{TechLoc} that for all open $U\subset X\times\Delta^m$, the inclusion
\[
\sM(U,\del)\to \sM_U
\]
induces a weak equivalence on the $K$-theory spectra. Thus $G_W(\sM(X\times\Delta^m,\del))$ is weakly equivalent to the homotopy fiber of the restriction map
\[
j^*:G(X\times\Delta^m)\to G(X\times\Delta^m\setminus W)
\]
which by Quillen's localization theorem is in turn weakly equivalent to $G(W)$. In particular, this shows that $G_{(b)}(X,m)$ is -1 connected and we have a strongly convergent spectral sequence
\[
E^1_{p,q}=\pi_{p+q}G_{(b)}(X,p)\Longrightarrow \pi_{p+q}G_{(b)}(X).
\]
As $E^1_{p,q}=0$ for $p+q<0$, this shows that $\sC\sK'_{a,b}(X)=0$ for $a<2b$, as claimed.

For the assertion on $\sC\sK'_{2n,n}(X)$, the spectral sequence gives the right exact sequence
\[
\pi_0(G_{(n)}(X,1))\xrightarrow{\del}\pi_0(G_{(n)}(X,0))\to \sC\sK'_{2n,n}(X)\to0.
\]
By definition, $G_{(n)}(X,0))=K(\sM_{(n)}(X))$, so we need to show that the image $M$ of $\pi_0(G_{(n)}(X,1))$ under $\del$ is the same as the kernel  $N$ of the map
\[
K_0(\sM_{(n)}(X))\to K_0(\sM_{(n+1)}(X)).
\]

We note that $\sM_{(n)}(X)$ is a Serre subcategory of the abelian category $\sM_{(n+1)}(X)$; let $\sM_{(n+1/n)}(X)$ denote the quotient category. By Quillen's localization theorem, we have the exact sequence
\[
\ldots\to K_1(\sM_{(n+1/n)}(X))\xrightarrow{\del} K_0(\sM_{(n)}(X))\to K_0(\sM_{(n+1)}(X))\to \ldots .
\]
Furthermore, by devissage, we have
\[
K_1(\sM_{(n+1/n)}(X))\cong\oplus_{x\in X_{(n)}}K_1(k(x))=\oplus_{x\in X_{(n)}}k(x)^\times
\]
where $X_{(n)}$ is the set of dimension $n$ points of $X$. Finally, the boundary map $K_1(\sM_{(n+1/n)}(X))\to K_0(\sM_{(n)}(X))$ can be described as follows: Take $y\in X_{(n+1)}$ and let $Y\subset X$ be the reduced closure of $y$. Take $f\in k(y)^\times$ and let $Y'\subset Y\times\P^1$ be the reduced closure of the graph of $f$.  For $t\in \P^1(k)$,  let $\sO_{Y'}(t):=\sO_{Y'}\otimes_{\sO_{\P^1}}k(t)$. Then
\[
\del(f\in k(y)^\times)=p_{1*}([\sO_{Y'}(0)]-[\sO_{Y'}(\infty)]).
\]
Noting that $\sO_{Y'}(t)$ is an $\sO_{X\times t}$-module, we can consider $p_{1*}([\sO_{Y'}(0)])$ as simply 
$[\sO_{Y'}(0)]$ via the isomorphism $p_1:X\times0\to X$, and similarly for $[\sO_{Y'}(\infty)]$. Finally, as $f$ is in $k(y)^\times$ and $Y$ has dimension $n+1$, it follows that $\sO_{Y'}(0)$ and $\sO_{Y'}(\infty)$ have support of dimension at most $n$. 

Given such a $Y$ and $f\in k(y)^\times$, we may identify $X\times\Delta^1$ with the open subscheme $X\times(\P^1\setminus\{1\})$ of $X\times\P^1$ via the open immersion $j:\Delta^1\to \P^1$ defined by  $j(t_0, t_1):=t_1/t_0$. Then $(\id\times j)^{-1}(\sO_{Y'})$ is a coherent sheaf in $\sM_{(n)}(X\times\Delta^1,\del)$ and 
\[
\del([(\id\times j)^{-1}(\sO_{Y'})])=\pm\del(f\in k(y)^\times).
\]
Thus, $M\supset N$. 

For the reverse inclusion, take $\sF\in \sM_{(n)}(X\times\Delta^1,\del)$, let $Z$ be the support of $\sF$, let $Y\subset X$ be the closure of $p_1(Z)$ and let $W\subset X\times\Delta^1$ be $p_1^{-1}(Y)$; let $i:Z\to W$ be the inclusion. By devissage, we may assume that $\sF$ is an $\sO_Z$-module. But $W\cong Y\times\A^1$, so by the homotopy property for $G$-theory, there is a class $\eta\in G_0(Y)$ with 
\[
i_*([\sF])=p_1^*(\eta).
\]
Letting $i_0, i_1:Y\to Y\times\Delta^1=W$ be the inclusions $t_0=0$, $t_1=0$, respectively, this shows that
\[
i_0^*(i_*([\sF]))-i_1^*(i_*([\sF]))=0
\]
in $G_0(Y)$. As $Y$ has dimension $\le n+1$, this shows that the image of $\del([\sF])$ in $G_0(\sM_{(n+1)}(X))$ is zero, and hence $M\subset N$.
\end{proof}

\begin{cor} \label{cor:Injectivity} Take $X\in\Sch/k$ and let $d$ be an integer with $\dim_kX\le d$. Then the natural map
\[
\gamma^X_n:\sC\sK'_{2n,n}(X)\to \sK'_{2n,n}(X)=G_0(X)
\]
is an isomorphism for $n\ge d$ and is injective for $n=d-1$.
\end{cor}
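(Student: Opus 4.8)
The plan is to combine the explicit description of $\sC\sK'_{2n,n}(X)$ from Corollary~\ref{cor:Computation2} and Theorem~\ref{thm:ConnK0} with the description, given in the remark following Corollary~\ref{cor:Computation2}, of the natural transformation $\gamma^X_n\colon\sC\sK'_{2n,n}(X)\to\sK'_{2n,n}(X)=G_0(X)$ as the map on $\pi_0$ induced by the canonical tower map $G_{(n)}(X)\to G_{(\dim_kX)}(X)\sim G(X)$. Since $\dim_kX\le d$, the cases split cleanly: either $n\ge\dim_kX$ (which covers all $n\ge d$, and also $n=d-1$ when $d>\dim_kX$), or $d=\dim_kX$ and $n=d-1$, so that $n=\dim_kX-1$.

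First I would treat the range $n\ge\dim_kX$. Here the tower $\{G_{(m)}(X)\}_m$ has already stabilized, $G_{(n)}(X)=G_{(\dim_kX)}(X)$, and the canonical map $G(X)\to G_{(\dim_kX)}(X)$ is a weak equivalence by homotopy invariance of $G$-theory. The map $G_{(n)}(X)\to G(X)$ computing $\gamma^X_n$ is then this equivalence, so $\gamma^X_n$ is an isomorphism. This already settles the assertion for all $n\ge d$, and also the injectivity claim when $d>\dim_kX$; so for injectivity it remains to treat $d=\dim_kX$ and $n=d-1$.

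For $n=d-1$ with $d=\dim_kX$, I would use the strongly convergent spectral sequence $E^1_{p,q}=\pi_{p+q}G_{(n)}(X,p)\Longrightarrow\pi_{p+q}G_{(n)}(X)$ appearing in the proof of Theorem~\ref{thm:ConnK0}. Since each $G_{(n)}(X,m)$ is $-1$-connected, the edge homomorphism exhibits $\sC\sK'_{2n,n}(X)=\pi_0G_{(n)}(X)$ as a quotient of $E^1_{0,0}=K_0(\sM_{(n)}(X))$, and by Theorem~\ref{thm:ConnK0} the kernel of this surjection is exactly $\ker\bigl(K_0(\sM_{(n)}(X))\to K_0(\sM_{(n+1)}(X))\bigr)$; since $n+1=d=\dim_kX$ we have $\sM_{(n+1)}(X)=\sM_X$, so this kernel equals $\ker\bigl(K_0(\sM_{(n)}(X))\to G_0(X)\bigr)$. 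The tower map $G_{(n)}(X)\to G_{(n+1)}(X)$ is a morphism of simplicial spectra, hence compatible with the $E^1$-pages; on $E^1_{0,0}$ it is the map $K_0(\sM_{(n)}(X))\to K_0(\sM_{(n+1)}(X))=G_0(X)$ induced by the inclusion of abelian categories, while the level-$d$ edge homomorphism $\pi_0G_{(d)}(X,0)\to\pi_0G_{(d)}(X)$ is an isomorphism, since $m\mapsto G_{(d)}(X,m)$ is weakly equivalent to the constant simplicial spectrum $G(X)$. Chasing the resulting commutative square shows that the composite $K_0(\sM_{(n)}(X))\twoheadrightarrow\sC\sK'_{2n,n}(X)\xrightarrow{\gamma^X_n}G_0(X)$ is precisely the inclusion-induced map; having quotiented $K_0(\sM_{(n)}(X))$ by exactly the kernel of that map, we conclude that $\gamma^X_n$ is injective.

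The main obstacle is the compatibility invoked at the end: after the identifications of Corollary~\ref{cor:Computation2} one must verify that $\gamma^X_n$ is induced on $\pi_0$ by the tower maps $G_{(n)}(X)\to G_{(n+1)}(X)\to\cdots\to G(X)$ compatibly with the $E^1$-edge homomorphisms of the relevant spectral sequences — equivalently, that the square relating $K_0(\sM_{(n)}(X))\to K_0(\sM_{(n+1)}(X))$ to $\sC\sK'_{2n,n}(X)\xrightarrow{\gamma^X_n}\sC\sK'_{2(n+1),n+1}(X)$ commutes. This amounts to naturality of the homotopy-dimension filtration and of its edge homomorphism, but verifying it requires unwinding the weak equivalences $\beta_{X,M}$, $\gamma_{X,M}$, $\delta_{X,M}$ constructed in the proof of Corollary~\ref{cor:Computation2}; the remaining steps are routine.
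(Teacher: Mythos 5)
Your argument is correct and takes essentially the same route as the paper: both identify $\sC\sK'_{2n,n}(X)$ with $\pi_0 G_{(n)}(X)$, use Theorem~\ref{thm:ConnK0} to describe this as $\im\bigl(K_0(\sM_{(n)}(X))\to K_0(\sM_{(n+1)}(X))\bigr)$, and reduce everything to seeing that $\gamma^X_n$ becomes the evident map of that image into $G_0(X)$, which is injective for $n\ge d-1$ and an isomorphism for $n\ge d$. The compatibility you defer at the end as ``routine unwinding'' is exactly the step the paper makes explicit, via a commutative square comparing $K_{(n)}$- and $K_{(d')}$-theory with supports on an ambient smooth $M$, which shows that the canonical map $\sC\sK'_{2n,n}(X)\to\sK'_{2n,n}(X)$ is the tower map $\pi_0 G_{(n)}(X)\to\pi_0 G_{(d')}(X)=G_0(X)$, i.e.\ the assertion of the remark you quote.
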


\begin{proof} The identity $\sK'_{2n,n}(X)=G_0(X)$ follows from the fact that the $T$-spectrum $\sK$ represents Quillen $K$-theory on $\Sm/k$ and Quillen's localization theorem identifies the homotopy fiber of $K(M)\to K(M\setminus X)$ with the $G$-theory spectrum $G(X)$, for $M\in \Sm/k$ and $i:X\to M$ a closed immersion. 

For $n\ge d-1$, theorem~\ref{thm:ConnK0} gives the identification
\[
 \pi_0(G_{(n)}(X))\cong \sC\sK'_{2n,n}(X)\cong im(K_0(\sM_{(n)}(X))\to G_0(X)),
\] 
the first isomorphism being induced by the localization fiber sequence
\[
G_{(n)}(X))\to K_{(n)}(M))\xrightarrow{j^*} K_{(n)}(M\setminus X)
\]
for a given closed immersion $i:X\to M$, $M\in \Sm/k$, and the second isomorphism the computation of $\sC\sK'_{2n,n}(X)$ given in theorem~\ref{thm:ConnK0}. We need only check that via this second isomorphism and the identity $\sK'_{2n,n}(X)=G_0(X)$, the canonical map $\rho:\sC\sK'_{2n,n}(X)\to \sK'_{2n,n}(X)$ transforms to the evident map $im(K_0(\sM_{(n)}(X))\to G_0(X))\to G_0(X)$, which is clearly injective for all $n$, and an isomorphism for $n\ge d$.

We have the commutative square
\[
\xymatrix{
K_{(n)}(M))\ar[d]\ar[r]^-{j^*}& K_{(n)}(M\setminus X)\ar[d]\\
K_{(d')}(M)\ar[r]_-{j^*}&K_{(d')}(M\setminus X)
}
\]
where $d'$ is any integer with $d'\ge n$, $d'\ge \dim_kM$. We have as well canonical weak equivalences $K(M)\to K_{(d')}(M)$, $K(M\setminus X)\to K_{(d')}(M\setminus X)$, induced by the identification of $K(M)$ with the 0-spectrum in the simplicial spectrum $m\mapsto K_{(d')}(M,m)$, and similarly for $M\setminus X$. This shows that  the natural map $\sC\sK'_{2n,n}(X)\to \sK'_{2n,n}(X)$ is given by the canonical map
\[
\pi_0(G_{(n)}(X))\to \pi_0(G_{(d')}(X))=G_0(X).
\]
Via the computation of  theorem~\ref{thm:ConnK0}, this is just the evident map
\[
im(K_0(\sM_{(n)}(X))\to G_0(X))\to G_0(X),
\]
as desired.
\end{proof}

\begin{rems}\label{rem:Cai} 1. For $X$ smooth of dimension $d$, we may take the identity closed immersion to define $\sC\sK'_{**}(X)$, so that
\[
\sC\sK'_{a,b}(X)=\sC\sK^{2d-a,d-b}(X)
\]
In particular, letting $\sM^{(b)}(X)\subset \sM_X$ be the full subcategory of coherent sheaves on $X$ with support in codimension at least $b$, theorem~\ref{thm:ConnK0} shows that
\[
\sC\sK^{2n,n}(X)=\im(K_0(\sM^{(n)}(X)\to K_0(\sM^{(n-1)}(X)).
\]
In fact, the argument for  theorem~\ref{thm:ConnK0} works to show this identity for $k$ an arbitrary  perfect field, we do not need resolution  of singularities here. 
\\\\
2. Cai \cite{Cai} has defined a theory of ``connective higher $K$-theory" as a bi-graded oriented cohomology theory on $\Sm/k$. Denoting this theory as $CK^{a,b}$, Cai defines $CK^{a,b}$ as
\[
CK^{a,b}(X):=\im(K_{2b-a}(\sM^{(b)}(X)\to \sM^{(b-1)}(X))
\]
Thus we have 
\[
\sC\sK^{2n,n}(X)=CK^{2n,n}(X)
\]
for $X\in \Sm/k$.
\end{rems}

We conclude this section with a description of the 1st Chern class operators on $\sC\sK_{2n,n}'(X)$. Let $L$ be a line bundle on $X$, and let $i:X\to M$ be a closed immersion with  $M\in\Sm/k$ such that $L$ extends to a line bundle $\sL$ on $M$. We note that the 1st Chern class operator $\tilde{c}_1(L)$ on $\sC\sK_{2n,n}'(X)$ is given by the product with $c_1(\sL)\in \sC\sK^{2,1}(M)$ on $\sC\sK^{2*,*}_X(M)$. As $\sC\sK^{2,1}(M)\to \sK^{2,1}(M)=K_0(M)$ is injective (corollary~\ref{cor:Injectivity}), it follows that the operators $\tilde{c}_1(L)$ satisfy the multiplicative formal group law:
\[
\tilde{c}_1(L\otimes L')=\tilde{c}_1(L)+\tilde{c}_1(L')-\beta\tilde{c}_1(L)\circ \tilde{c}_1(L')
\]
where $\beta\in \sC\sK_{2,1}'(k)$ is the ``Bott element", i.e., the element corresponding to $1\in\Z$ under the sequence of isomorphisms
\[
\sC\sK_{2,1}'(k)=\sC\sK^{-2,-1}(k)\to \sK^{-2,-1}(k)\cong K_0(k)\xrightarrow{\dim_k}\Z.
\]
Thus, we need only describe $\tilde{c}_1(L)$ for $L$ which is very ample on $X$. 

Let $\alpha$ be in $\sC\sK_{2n,n}'(X)$ for some $n$. If $n\ge\dim_kX+1$, then both $\sC\sK_{2n,n}'(X)$ and $\sC\sK_{2n-2,n-1}'(X)$ are equal to $G_0(X)$ via the canonical map $\sC\sK_{2*,*}'(X)\to \sK_{2*,*}'(X)=G_0(X)$, so $\tilde{c}_1(L)$ is given by the 1st Chern class operator on $\sK_{2*,*}'(X)=G_0(X)$. This in turn is given by multiplication by the $K$-theory 1st Chern class via the $K_0(X)$-module structure on $G_0(X)$, that is, 
\[
\tilde{c}^{G_0}_1(L)(\alpha)=(1-L^{-1})\cdot\alpha.
\]

In general, we have our description 
\[
\sC\sK_{2n,n}'(X)\cong  im(K_0(\sM_{(n)}(X))\to K_0(\sM_{(n+1)}(X))), 
\]
compatible with projective push-forward.  Thus, for each $\alpha\in \sC\sK_{2n,n}'(X)$, there is a closed immersion $i:X'\to X$ in $\Sch/k$, with $\dim_kX'\le n$, and an element $\alpha'\in 
 \sC\sK_{2n,n}'(X')$ with $i_*(\alpha')=\alpha$. Since
 \[
 \tilde{c}_1(L)(i_*\alpha')=i_*(\tilde{c}_1(i^*L)(\alpha')),
 \]
 we reduce to the case $n\ge \dim_kX$; similarly, we may assume $X$ is irreducible. By our above computation, the only remaining case is $n= \dim_kX$.
 
In this case, $\sC\sK_{2n,n}'(X)=G_0(X)$ and for $\alpha\in \sC\sK_{2n,n}'(X)$, $\tilde{c}_1(\alpha)\in \sC\sK_{2n-2,n-1}'(X)$ is characterised by the identity
\[
(1-L^{-1})\cdot \alpha=im(\tilde{c}_1(\alpha)\in G_0(X)),
\]
via the inclusion (corollary~\ref{cor:Injectivity}) $\sC\sK_{2n-2,n-1}'(X)\to G_0(X)$. We now use the assumption that $L$ is very ample. Write $\alpha=\sum_in_i[\sF_i]$, where the $\sF_i$ are coherent sheaves on $X$, $n_i$ are integers and $[-]$ denotes class in $G_0(X)$. Choose a   section $s$ of $L$ that is locally a non-zero divisor on each $\sF_i$ and on $\sO_X$; this is possible by Kleiman's transversality theorem and the fact that $L$ is very ample. Let $H\subset X$ be the closed subschema defined by $s$, giving us the short exact sequence of sheaves on $X$
\[
0\to L^{-1}\to \sO_X\to \sO_H\to 0
\]
For each $i$, the tensor product $\sF_i\otimes\sO_H$ is an element of $\sM_{(n-1)}(X)$ (its support is contained in $H$), giving the well-defined class $[\sF_i\otimes\sO_H]$ in $K_0(\sM_{(n-1)}(X))$. In addition, the above exact sequence and the fact that $s$ is a non-zero divisor on $\sF_i$ gives the identity
\[
[\sF_i\otimes\sO_H]=(1-L^{-1})\cdot[\sF_i]\in G_0(X)
\]
for each $i$ and hence $\tilde{c}_1(\alpha)\in \sC\sK_{2n-2,n-1}'(X)$ is given by $\sum_in_i[\sF_i\otimes\sO_H]$.

\begin{rem}\label{rem:ConnGThy} Set $CG_{a,b}(X):= \pi_{a-2b}G_{(b)}(X)$, for $X\in \Sch/k$. From the proof of corollary~\ref{cor:Computation2} we have isomorphisms $\delta_{X,M}:\pi_{a-2b}(K^{(m-b)}_X(M))\to \pi_{a-2b}(G_{(b)}(X))$ for $X$ a closed subscheme of $M\in \Sm/k$, $m=\dim M$. Combined with the natural isomorphisms 
$\sC\sK_X^{2m-a,m-b}(M)\cong \pi_{a-2b}(K^{(m-b)}_X(M))$ given by theorem~\ref{thm:HC} and  lemma~\ref{lem:Rep}, the properties of $G_{(b)}(-)$ discussed in this section show that we have defined an oriented duality theory  $(CG_{*,*}, \sC\sK^{*,*})$ on $\Sch/k$, for $k$ an arbitrary perfect field, except that we have nt defined a cap product structure of $ \sC\sK^{*,*}$ on $CG_{*,*}$. Possibly this could be supplied by the method used by Cai in \cite[\S 6.3]{Cai} to define pull back maps in his theory $CK^{*,*}$ for regular embeddings.

The results of this section can be interpreted as saying that, in case $k$ admits resolution of singularities, the oriented duality theory $(\sC\sK'_{*,*}, \sC\sK^{*,*})$ on $\Sch/k$ defined using the results of \cite{LevineOrient} is isomorphic to $(CG_{*,*}, \sC\sK^{*,*})$ (neglecting the cap product structure).
\end{rem}

\section{The comparison map} We consider the classifying map
\[
\theta_{\sC\sK}:\Omega_*\to \sC\sK'_{2*,*}
\]
constructed in proposition~\ref{prop:Universal}. Let $\phi_{CK}:\L_*\to \Z[\beta]$ be the ring homomorphism classifying the formal group law $(u+v-\beta uv, \Z[\beta])$ of the theory $\sC\sK$ and $\phi_\Omega:\L_*\to \Omega_*(k)$ the classifying map for the formal group law for $\Omega_*$. For each $X\in\Sch/k$, the external products make $\Omega_*(X)$ into an $\Omega_*(k)$-module and thus via $\phi_\Omega$ an $\L_*$-module. It is easy to check that the various structures for $\Omega_*$ make the assignment $X\mapsto \Omega_*(X)\otimes_{\L_*}\Z[\beta]$ into an oriented Borel-Moore homology theory on $\Sch/k$, which we denote by $\Omega_*\otimes_{\L_*}\Z[\beta]$; the universality of $\Omega_*$ makes $\Omega_*\otimes_{\L_*}\Z[\beta]$ the universal oriented Borel-Moore homology theory on $\Sch/k$ having formal group law $(u+v-\beta uv, \Z[\beta])$.

\begin{lem} \label{lem:Descent} $\theta_{\sC\sK}$ factors through the surjection
\[
\Omega_*\to \Omega_*\otimes_{\L_*}\Z[\beta].
\]
\end{lem}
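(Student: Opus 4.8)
The plan is to show that the $\L_*$-module map $\theta_{\sC\sK}\colon\Omega_*\to\sC\sK'_{2*,*}$ kills the ideal $I\subset\Omega_*(X)$ generated by all elements of the form $(\phi_\Omega(a)-\phi_{CK}(a))\cdot\alpha$ with $a\in\L_*$ and $\alpha\in\Omega_*(X)$; equivalently, that $\theta_{\sC\sK}$ is $\L_*$-linear and factors through $\Omega_*(X)\otimes_{\L_*}\Z[\beta]=\Omega_*(X)/I$. First I would recall from Corollary~\ref{cor:NatTrans} (and Remark~\ref{rem:Char}) that $\theta_{\sC\sK}$ is compatible with external products, hence with the $\Omega_*(k)$-module structures, and that under the classifying map $\phi_\Omega\colon\L_*\iso\Omega_*(k)$ (an isomorphism by \cite[theorem 1.2.7]{LevineMorel}) this makes $\theta_{\sC\sK}$ an $\L_*$-module homomorphism, where $\L_*$ acts on $\sC\sK'_{2*,*}(X)$ through the composite $\L_*\to\sC\sK'_{2*,*}(k)=\sC\sK^{-2*,-*}(k)$.

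The key step is then to identify that composite. By Corollary~\ref{cor:NatTrans}, the induced ring map $\vartheta^{\sC\sK}(k)\colon\Omega^*(k)\to\sC\sK^{2*,*}(k)$ together with Remark~\ref{rem:CompClassFGL}, equation~\eqref{eqn:LazardTransform}, gives $\vartheta^{\sC\sK}\circ\phi_\Omega=\phi_{\sC\sK}$, where $\phi_{\sC\sK}\colon\L^*\to\sC\sK^{2*,*}(k)$ is the map classifying the formal group law of $\sC\sK$. By Remark~\ref{rem:KThyFGL}, that formal group law is $u+v-\beta uv$ over $\sC\sK^{2*,*}(k)=\Z[\beta]$, so $\phi_{\sC\sK}=\phi_{CK}$ after the identification $\sC\sK^{2*,*}(k)\cong\Z[\beta]$ of Lemma~\ref{lem:unit}(2). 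Hence the $\L_*$-action on $\sC\sK'_{2*,*}(X)$ factors through $\phi_{CK}\colon\L_*\to\Z[\beta]$. Consequently, for $a\in\L_*$ and $\alpha\in\Omega_*(X)$, one has $\theta_{\sC\sK}(\phi_\Omega(a)\cdot\alpha)=\phi_{CK}(a)\cdot\theta_{\sC\sK}(\alpha)$, so $\theta_{\sC\sK}$ is precisely $\Z[\beta]$-linear with respect to the $\L_*$-algebra structure on $\Z[\beta]$ given by $\phi_{CK}$; this is exactly the statement that it factors through $\Omega_*(X)\otimes_{\L_*}\Z[\beta]$, and the resulting factorization is a natural transformation of oriented Borel--Moore homology theories by the universal property of $\Omega_*\otimes_{\L_*}\Z[\beta]$ recalled just before the lemma.

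The only delicate point is bookkeeping with gradings and sign/periodicity conventions: one must check that the element $\beta\in\sC\sK'_{2,1}(k)$ defined via $\sC\sK'_{2,1}(k)=\sC\sK^{-2,-1}(k)\to\sK^{-2,-1}(k)\cong K_0(k)\xrightarrow{\dim}\Z$ agrees, under the isomorphism $\alpha_k$ of the duality theory, with the generator $\beta$ of $\sC\sK^{2*,*}(k)$ appearing in Remark~\ref{rem:KThyFGL}, and that the formal group law $u+v-\beta uv$ is written consistently in homological versus cohomological indexing. I expect this to be the main (though purely formal) obstacle; it is handled by tracing through Lemma~\ref{lem:unit}(2), Remark~\ref{rem:KThyFGL}, and the compatibility of $\vartheta^{\sC\sK}$ with first Chern classes in Corollary~\ref{cor:NatTrans}. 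No new input beyond the cited results is needed.
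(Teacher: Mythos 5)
Your proposal is correct and follows essentially the same route as the paper's proof: compatibility of $\theta_{\sC\sK}$ with external products gives $\L_*$-linearity, and the factorization then reduces to the identity $\theta_{\sC\sK}(k)\circ\phi_\Omega=\phi_{CK}$, which is exactly the general relation $\vartheta^A\circ\phi_\Omega=\phi_A$ of remark~\ref{rem:CompClassFGL} combined with the identification of the formal group law of $\sC\sK$ as $(u+v-\beta uv,\Z[\beta])$ in remark~\ref{rem:KThyFGL}. The extra bookkeeping you flag about the Bott element and indexing conventions is harmless but not needed beyond what those remarks already supply.
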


\begin{proof}  As the natural transformation $\theta_{\sC\sK}$ is compatible with external products, $\theta_{\sC\sK}(k)$ is a ring homomorphism and $\theta_{\sC\sK}(X)$ is a map of $\L_*-\Z[\beta]$-modules. Using the universal property of $\Omega_*\otimes_{\L_*}\Z[\beta]$, we need only show that
\[
\theta_{\sC\sK}(k)\circ\phi_\Omega:\L_*\to \Z[\beta]
\]
is  equal to the classifying homomorphism $\phi_{CK}$. But this is a general property of the classifying map $\theta_{H,A}:\Omega_*\to H_*$ for any oriented duality theory $(H,A)$, as we have already noted in remark~\ref{rem:CompClassFGL}. 
\end{proof}

In \cite[theorem 1.2.18]{LevineMorel} it is shown that the classifying map of oriented cohomology theories on $\Sm/k$
\[
\vartheta_{K_0}:\Omega^*\otimes_\L\Z[\beta,\beta^{-1}]\to K_0[\beta,\beta^{-1}]
\]
is an isomorphism, where $\L\to \Z[\beta,\beta^{-1}]$ is the classifying map for the formal group law $(u+v-\beta uv,  \Z[\beta,\beta^{-1}])$. Dai extended this result, showing in \cite{Dai} that the classifying map
\[
\vartheta_{G_0}:\Omega_*\otimes_\L\Z[\beta,\beta^{-1}]\to G_0[\beta,\beta^{-1}]
\]
is an isomorphism of  oriented Borel-Moore homology theories on $\Sch/k$. We now prove the analogous result for connective $K$-theory:

\begin{definition} For $X\in\Sch/k$, write $\Omega^{CK}_*(X)$ for $\Omega_*(X)\otimes_{\L_*}\Z[\beta]$. 
\end{definition}

\begin{thm} \label{thm:ClassIso} The induced classifying map
\[
\theta_{\sC\sK}:\Omega^{CK}_*\to \sC\sK'_{2*,*}
\]
is an isomorphism.
\end{thm}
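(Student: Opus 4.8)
The plan is to prove this by Noetherian induction on $d=d_X:=\max_{X'}\dim_kX'$ (the maximum over the irreducible components $X'$ of $X$), comparing the algebraic cobordism presentation \eqref{eqn:PresentOmega} of $\Omega_*(X)$ with a matching localization presentation of $\sC\sK'_{2*,*}(X)$ and finishing with a diagram chase, in the spirit of the proof of Theorem~\ref{thm:MGLComp}.

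First I would build the $\sC\sK$-side presentation. Since $(\sC\sK'_{*,*},\sC\sK^{*,*})$ is an oriented duality theory, each pair $(W,X)$ with $W\subset X$ closed and $U=X\setminus W$ has a localization long exact sequence
\[
\cdots\to\sC\sK'_{a+1,b}(U)\xrightarrow{\partial}\sC\sK'_{a,b}(W)\xrightarrow{i_*}\sC\sK'_{a,b}(X)\xrightarrow{j^*}\sC\sK'_{a,b}(U)\xrightarrow{\partial}\sC\sK'_{a-1,b}(W)\to\cdots .
\]
In bidegree $(2n,n)$ the group $\sC\sK'_{2n-1,n}(W)$ vanishes, because $\sC\sK'_{a,b}=0$ for $a<2b$ by Theorem~\ref{thm:ConnK0}; this is exactly where connectivity enters. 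Passing to the filtered colimit of the resulting truncated sequences over the reduced closed $W$ containing no $d$-dimensional generic point of $X$ — along this system $X\setminus W$ shrinks to $\coprod_{\eta\in X_{(d)}}\Spec k(\eta)$ — yields an exact sequence
\[
\bigoplus_{\eta\in X_{(d)}}\sC\sK'_{2n+1,n}(k(\eta))\xrightarrow{\partial}\sC\sK^{\prime(1)}_{2n,n}(X)\xrightarrow{i_*}\sC\sK'_{2n,n}(X)\xrightarrow{j^*}\bigoplus_{\eta\in X_{(d)}}\sC\sK'_{2n,n}(k(\eta))\to 0,
\]
with $\sC\sK^{\prime(1)}_{2*,*}(X):=\colim_W\sC\sK'_{2*,*}(W)$. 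By Lemma~\ref{lem:unit}(2) the leftmost term is identified, via $t_{\sC\sK}$, with a shift of $\sC\sK^{2*,*}(k)\otimes_\Z k(\eta)^\times=\Z[\beta]\otimes_\Z k(\eta)^\times$, and $\partial\circ\bigoplus_\eta t_{\sC\sK}(\eta)$ is the $\sC\sK$-analogue of the divisor map $div_\MGL$ of \eqref{eqn:HM2}.

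Next I would compare this with the cobordism side. Tensoring \eqref{eqn:PresentOmega} over $\L_*$ with $\Z[\beta]$ — which stays exact because $\Omega_*(k(\eta))$ is a free $\L_*$-module (\cite{LevineMorel}), killing the relevant $\Tor_1^{\L_*}$-terms — gives an exact four-term sequence with terms $\bigoplus_\eta(\L_{*-d+1}\otimes k(\eta)^\times)\otimes_{\L_*}\Z[\beta]$, $\Omega^{CK,(1)}_*(X)$, $\Omega^{CK}_*(X)$, $\bigoplus_\eta\Omega^{CK}_*(k(\eta))$, $0$, where $\Omega^{CK,(1)}_*(X)=\colim_W\Omega^{CK}_*(W)$. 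Using the functoriality of the oriented-duality-theory construction under the orientation-preserving map $\MGL\to\sC\sK$, i.e. the compatibilities of $\theta_{\sC\sK}$ recorded in Corollary~\ref{cor:NatTrans} and Lemma~\ref{lem:Descent} (together with the factorization of $t_{\sC\sK}$ through $\sC\sK^{2*,*}(k)\otimes k(\eta)^\times$ and of $\theta_{\sC\sK}$ through $\Omega^{CK}_*$), one checks that this tensored sequence maps to the $\sC\sK$-sequence above, via $(\phi_{CK})_*\otimes\id$ on the far left, $\theta^{(1)}_{\sC\sK}(X)$, $\theta_{\sC\sK}(X)$, and $\bigoplus_\eta\theta_{\sC\sK}(k(\eta))$ on the far right.

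Then I would run the induction. The base case $X=\emptyset$ is trivial. For the step, assume $\theta_{\sC\sK}(W)$ is an isomorphism whenever $d_W<d$; then $\theta^{(1)}_{\sC\sK}(X)$ is an isomorphism, being a colimit of such maps (any closed $W\subset X$ missing all $d$-dimensional generic points has $d_W<d$). The far-left vertical is surjective — indeed an isomorphism — since $\phi_{CK}\colon\L_*\to\Z[\beta]$ is surjective and $t_{\sC\sK}(\eta)$ is an isomorphism by Lemma~\ref{lem:unit}(2). For the far-right vertical: Corollary~\ref{cor:Injectivity} together with Theorem~\ref{thm:ConnK0} (passed to the colimit over neighbourhoods of $\eta$) gives $\sC\sK'_{2n,n}(k(\eta))\cong G_0(k(\eta))\cong\Z$ for $n\ge d$ and $\sC\sK'_{2n,n}(k(\eta))=0$ for $n<d$, the first identification being compatible with multiplication by $\beta$; and $\Omega^{CK}_*(k(\eta))=\Omega_*(k(\eta))\otimes_{\L_*}\Z[\beta]$ is the free $\Z[\beta]$-module of rank one on the fundamental class $[\overline{\{\eta\}}]$ (by \cite{LevineMorel}, $\Omega_*(k(\eta))\cong\L_{*-d}$), which $\theta_{\sC\sK}$ sends $\Z[\beta]$-linearly to the fundamental class of $\sC\sK'_{2*,*}(k(\eta))$, a generator there; hence $\bigoplus_\eta\theta_{\sC\sK}(k(\eta))$ is an isomorphism. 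A diagram chase (four lemma: far-left vertical surjective, the next one and the far-right one isomorphisms) then forces $\theta_{\sC\sK}(X)$ to be an isomorphism, completing the induction.

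The main obstacle is the first step: constructing the $\sC\sK$-side four-term sequence and matching it squarewise to \eqref{eqn:PresentOmega}$\,\otimes_{\L_*}\Z[\beta]$. One must verify that the localization long exact sequence of $(\sC\sK',\sC\sK)$ truncates to a right-exact four-term sequence in the geometric bidegrees — this is precisely where the vanishing $\sC\sK'_{a,b}=0$ ($a<2b$) of Theorem~\ref{thm:ConnK0} is used — that the generic-fibre groups $\sC\sK'_{2n,n}(k(\eta))$ and $\sC\sK'_{2n+1,n}(k(\eta))$ are exactly as computed, and — the most delicate point — that the left-hand square commutes, i.e. that the $\sC\sK$-divisor map is the reduction along $\phi_{CK}$ of the $\MGL$-divisor map of \eqref{eqn:HM2}; this relies on the naturality of the whole oriented-duality-theory package under $\MGL\to\sC\sK$.
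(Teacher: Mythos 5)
Your proposal is correct and follows essentially the same route as the paper: induction on the maximal dimension of components, the presentation \eqref{eqn:PresentOmega} tensored with $\Z[\beta]$ matched against the localization sequence for $\sC\sK'_{2*,*}$ (truncated on the right using the vanishing $\sC\sK'_{a,b}=0$ for $a<2b$ from Theorem~\ref{thm:ConnK0}), identification of the generic-point terms via Lemma~\ref{lem:unit} and Theorem~\ref{thm:ConnK0}, compatibility of the divisor maps via naturality of the oriented duality package under $\MGL\to\sC\sK$, and a five-lemma chase. The only (harmless) deviation is that you establish exactness of the tensored top row upfront by a $\Tor_1^{\L_*}$-vanishing argument using freeness of $\Omega_*(k(\eta))$, whereas the paper only records that this row is a complex and recovers its exactness as a byproduct of the diagram chase.
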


\begin{proof}  For $X\in\Sch/k$, we consider $\theta_{\sC\sK}$ as a map of presheaves on $X_\Zar$, so we may evaluate at the generic points of $X$. For $\eta$ a dimension $d$ generic point of $X$, it follows from the description of $\sC\sK'_{2*,*}$ given by theorem~\ref{thm:ConnK0} that
\[
\sC\sK'_{2n,n}(\eta)=\begin{cases}K_0(k(\eta))&\text{ for }n\ge d\\ 0&\text{ for }n<d.\end{cases}
\]
By \cite[corollary 4.4.3]{LevineMorel}, the pull-back map $p_X^*:\Omega_*(k)\to \Omega_{*+d}(k(\eta))$ is an isomorphism, so by \cite[theorem 1.2.7]{LevineMorel} we have the isomorphism of $\L_*$-modules
\[
\Omega_{*}(k(\eta))\cong \L_{*-d}.
\]
From this it follows easily that 
\[
\theta_{\sC\sK}(k(\eta)):\Omega^{CK}_*(k(\eta))\to \sC\sK'_{2*,*}(k(\eta))
\]
is an isomorphism. In particular, $\theta_{\sC\sK}(X)$ is an isomorphism for all reduced $X$ of dimension 0 over $k$. As the inclusion $X_{red}\to X$ induces an isomorphism on $\Omega_*$ and $\sC\sK'_{2*,*}$, the theorem is proven for all $X$ of dimension 0. 

We proceed by induction on the maximum $d$ of the dimensions of the components of $X$; we may assume that $X$ is reduced.
We use the constructions and notations from theorem~\ref{thm:MGLComp} and the discussion following that theorem.  We let $\sC\sK_{2*,*}^{\prime(1)}(X)$ be the inductive limit
\[
\sC\sK_{2*,*}^{\prime (1)}(X):=\colim_W \sC\sK_{2*,*}'(W)
\]
as $W$ runs over all (reduced) closed subschemes of $X$ which contain no dimension $d$ generic point of $X$. This, together with the map $\Div$ defined following theorem~\ref{thm:MGLComp}, and the localization exact sequence for $\sC\sK_{*,*}'$ gives us the commutative diagram with exact rows
\begin{equation}\label{eqn:Diag1}
\xymatrixcolsep{10pt}
\xymatrix{
\oplus_{\eta\in X_{(d)}}\L_{*-d+1}\otimes k(\eta)^\times\ar[r]^-{\Div}&\Omega_*^{(1)}(X)\ar[d]_{\vartheta^{(1)}}\ar[r]^-{i_*}&\Omega_*(X)\ar[d]_{\vartheta(X)}\ar[r]^-{j^*}&\oplus_{\eta\in X_{(d)}}\Omega_*(k(\eta))\ar[r]\ar[d]^{\vartheta}&0\\
\oplus_{\eta\in X_{(d)}}\sC\sK_{2*+1,*}'(k(\eta))\ar[r]_-{\partial} &\sC\sK_{2*,*}^{\prime (1)}(X)\ar[r]_-{i_*}&\sC\sK_{2*,*}'(X)\ar[r]_-{j^*}&\oplus_{\eta\in X_{(d)}}\sC\sK_{2*,*}'(k(\eta))\ar[r]&0}
\end{equation}

We apply $\Z[\beta]\otimes_\L(-)$ to the top row in \eqref{eqn:Diag1}. By lemma~\ref{lem:Descent}, the vertical maps in \eqref{eqn:Diag1}  descend to give the commutative diagram
\begin{equation}\label{eqn:Diag2}
\xymatrixcolsep{8pt}
\xymatrix{
\oplus_{\eta}\Z[\beta]_{*-d+1}\otimes k(\eta)^\times\ar[rr]^-{\Div_{CK}}&&\Omega_*^{CK(1)}(X)\ar[d]_{\vartheta^{(1)}}\ar[r]^-{i_*}&\Omega^{CK}_*(X)\ar[d]_{\vartheta(X)}\ar[r]^-{j^*}&\oplus_{\eta\in X_{(d)}}\Omega^{CK}_*(k(\eta))\ar[r]\ar[d]^{\vartheta}&0\\
\oplus_{\eta}\sC\sK_{2*+1,*}'(k(\eta))\ar[rr]_-{\partial}&&\sC\sK_{2*,*}^{\prime (1)}(X)\ar[r]_-{i_*}&\sC\sK_{2*,*}'(X)\ar[r]_-{j^*}&\oplus_{\eta\in X_{(d)}}\sC\sK_{2*,*}'(k(\eta))\ar[r]&0}
\end{equation}

By induction on $d$, the map $\vartheta^{(1)}$ is an isomorphism; we have already seen that $\vartheta$ is an isomorphism. We note that the bottom row is a sequence of $\Z[\beta]$-modules via the isomorphism $\sC\sK_{2*,*}(k)\cong\Z[\beta]$ and the $\sC\sK_{2*,*}(k)$-module structure given by external products.

Take $\eta\in X_{(d)}$. Then 
$\sC\sK_{2*+1,*}'(k(\eta))\cong \sC\sK^{2d-2*-1,d-*}(k(\eta))$. Via lemma~\ref{lem:unit}, we have 
 the isomorphism of $\Z[\beta]$-modules
\[
t_{\sC\sK}:\Z[\beta]_{*-d+1}\otimes k(\eta)^\times.\to \sC\sK_{2*+1,*}'(k(\eta)).
\]
Putting this into the diagram \eqref{eqn:Diag2} gives us the commutative diagram
\begin{equation}\label{eqn:Diag3}
\xymatrixcolsep{8pt}
\xymatrix{
\oplus_{\eta}\Z[\beta]_{*-d+1}\otimes k(\eta)^\times\ar[rr]^-{\Div_{CK}}&&\Omega_*^{CK(1)}(X)\ar[d]_{\vartheta^{(1)}}\ar[r]^-{i_*}&\Omega^{CK}_*(X)\ar[d]_{\vartheta(X)}\ar[r]^-{j^*}&\oplus_{\eta\in X_{(d)}}\Omega^{CK}_*(k(\eta))\ar[r]\ar[d]^{\vartheta}&0\\
\oplus_{\eta}\Z[\beta]_{*-d+1}\otimes k(\eta)^\times\ar[rr]_-{div_{CK}}&&\sC\sK_{2*,*}^{\prime (1)}(X)\ar[r]_-{i_*}&\sC\sK_{2*,*}'(X)\ar[r]_-{j^*}&\oplus_{\eta\in X_{(d)}}\sC\sK_{2*,*}'(k(\eta))\ar[r]&0}
\end{equation}
with the bottom row exact and the top row a complex.
 
We claim the identity map $Z[\beta]\otimes k(\eta)^\times\to Z[\beta]\otimes k(\eta)^\times$ fills in the diagram \eqref{eqn:Diag3} to a (up to sign) commutative diagram. Assuming this claim, it follows by a diagram chase that the top row is exact and the map $\vartheta(X)$ is an isomorphism. 

To prove the claim, the orientation $c_{\sC\sK}$ for $\sC\sK$ and the universal property of $\MGL$ gives the canonical map of oriented cohomology theories
\[
\rho_{\sC\sK}:(\MGL, c_\MGL)\to (\sC\sK, c_{\sC\sK})
\]
which in turn gives us the map of bi-graded oriented duality theories
\[
\rho_{\sC\sK}:(\MGL'_{*,*}, \MGL^{*,*})\to (\sC\sK'_{*,*}, \sC\sK^{*,*}).
\]
It follows from the characterization of $\theta_\MGL$, $\theta_{\sC\sK}$ given in remark~\ref{rem:Char} that
\[
\theta_{\sC\sK}=\rho_{\sC\sK}\circ\theta_\MGL.
\]

As discussed at the end of \S\ref{sec:ExplicitModel} and in lemma~\ref{lem:unit},   the orientations $c_\MGL$, $c_{\sC\sK}$ give rise  to canonical elements
\[
t_\MGL\in \MGL^{1,1}(\G_m),\ t_{\sC\sK}\in \sC\sK^{1,1}(\G_m).\
\]
These in turn give by functoriality canonical homomorphisms for each $X\in \Sm/k$
\[
t_\MGL(X):\sO_X^\times(X)\to  \MGL^{1,1}(X),\ t_{\sC\sK}(X):\sO_X^\times(X)\to  \sC\sK^{1,1}(X)
\]
with $t_{\sC\sK}(X)=\rho_{\sC\sK}(X)\circ t_{\sC\sK}(X)$. We extend $t_\MGL$ to 
\[
t_\MGL:\sO_X^\times(X)\otimes\L^*\to \MGL^{2*+1,*+1}(X)
\]
using the $\L^*$-module structure, and similarly have the extension of $ t_{\sC\sK}(X)$ to 
\[
t_{\sC\sK}:\sO_X^\times(X)\otimes\Z[\beta]^*\to \sC\sK^{2*+1,*+1}(X).
\]

The map $k(\eta)^\times\to \MGL^{1,1}(k(\eta))$ arising in the Hopkins-Morel spectral sequence is the map $t_\MGL(k(\eta))$. As $\rho_{\sC\sK}$ is a map of $\L_*$-$\Z[\beta]$ modules, we have the commutative diagram
\[
\xymatrix{
\oplus_{\eta\in X_{(d)}} \L_{*-d+1}\otimes k(\eta)^\times\ar[r]^-{t_\MGL}\ar[d]_\pi&
\oplus_{\eta\in X_{(d)}}\MGL'_{2*+1,*}(k(\eta))\ar[r]^-{\partial}\ar[d]_{\rho_{\sC\sK}}
&\MGL_{2*,*}^{\prime (1)}(X)\ar[d]^{\rho_{\sC\sK}(X^{(1)}}\\
\oplus_{\eta\in X_{(d)}} \Z[\beta]_{*-d+1}\otimes k(\eta)^\times\ar[r]_-{t_{\sC\sK}}&\sC\sK'_{2*+1,*}\ar[r]_{\partial}&\sC\sK_{2*,*}^{\prime (1)}(X)
}
\]
where $\pi$ is induced by the classifying map $\L_*\to\Z[\beta]$. The map $div_\MGL$ in diagram \eqref{eqn:HM2} is the composition $\partial\circ t_\MGL$ in the diagram above. Defining 
\[
div_{CK}:\oplus_{\eta\in X_{(d)}} \Z[\beta]_{*-d+1}\otimes k(\eta)^\times\to \sC\sK_{2*,*}^{\prime (1)}(X)
\]
as $div_{CK}:=\partial\circ t_{\sC\sK}$ gives us the commutative diagram
\[
\xymatrix{
\oplus_{\eta\in X_{(d)}} \L_{*-d+1}\otimes k(\eta)^\times\ar[r]^-{div_\MGL}\ar[d]_\pi
&\MGL_{2*,*}^{\prime (1)}(X)\ar[d]^{\rho_{\sC\sK}(X^{(1)}}\\
\oplus_{\eta\in X_{(d)}} \Z[\beta]_{*-d+1}\otimes k(\eta)^\times\ar[r]_-{div_{CK}}&\sC\sK_{2*,*}^{\prime (1)}(X)
}
\]
patching in the left-hand square in the commutative diagram \eqref{eqn:HM2} yields the commutative diagram
\begin{equation}\label{eqn:HM3}
\xymatrix{
\oplus_{\eta} \L_{*-d+1}\otimes k(\eta)^\times\ar[r]^-{\Div}\ar@{=}[d]&\Omega_*^{(1)}(X)\ar[d]_{\vartheta^{(1)}}\\
\oplus_{\eta\in X_{(d)}} \L_{*-d+1}\otimes k(\eta)^\times\ar[r]^-{div_\MGL}\ar[d]_\pi
&\MGL_{2*,*}^{\prime (1)}(X)\ar[d]^{\rho_{\sC\sK}(X^{(1)}}\\
\oplus_{\eta\in X_{(d)}} \Z[\beta]_{*-d+1}\otimes k(\eta)^\times\ar[r]_-{div_{CK}}&\sC\sK_{2*,*}^{\prime (1)}(X)
}
\end{equation}
As $\Div_{\sC\sK}:\Z[\beta]\otimes k(\eta)^\times\to \Omega_*^{(1)}(X)_{CK}$ is just the map formed by applying the functor $(-)\otimes_{\L}\Z[\beta]$ to $\Div: \L\otimes k(\eta)^\times\to\Omega_*^{(1)}(X)$, the desired commutativity follows from the commutativity of \eqref{eqn:HM3}.
\end{proof}

\begin{cor}\label{cor:GThyIso} 
Take $X\in\Sch/k$, and let $d$ be an integer with $d\ge \dim_kX$. Then 
\[
\gamma_n^X\circ \theta_{\sC\sK}(X):\Omega^{CK}_n(X)\to G_0(X)
\]
is an isomorphism for $n=d$ and an injection for $n=d-1$.
\end{cor}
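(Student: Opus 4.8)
The plan is to recognise that the map $\gamma_n^X\circ\theta_{\sC\sK}(X)$ is a composite of two maps whose behaviour has already been completely determined in the preceding results, so that the corollary follows by a one-line diagram chase rather than any new argument.

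First I would record that, by Theorem~\ref{thm:ClassIso}, the classifying map $\theta_{\sC\sK}(X)\colon \Omega^{CK}_n(X)\to \sC\sK'_{2n,n}(X)$ is an isomorphism for every $n\in\Z$ and every $X\in\Sch/k$ (we are in characteristic zero throughout, so resolution of singularities is available and Theorem~\ref{thm:ClassIso} applies). Second I would invoke Corollary~\ref{cor:Injectivity}: since the chosen $d$ satisfies $d\geq\dim_kX$, the natural map $\gamma_n^X\colon \sC\sK'_{2n,n}(X)\to\sK'_{2n,n}(X)=G_0(X)$ is an isomorphism for all $n\geq d$, and in particular for $n=d$, while it is injective for $n=d-1$.

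It then only remains to compose: a composite of two isomorphisms is an isomorphism, which handles the case $n=d$, and a composite of an isomorphism followed by an injection is an injection, which handles $n=d-1$. The one point worth spelling out is that the map named $\gamma_n^X$ in Corollary~\ref{cor:Injectivity} is exactly the canonical natural transformation $\sC\sK'_{2*,*}\to\sK'_{2*,*}=G_0$ evaluated at $X$, so that $\gamma_n^X\circ\theta_{\sC\sK}(X)$ is literally the map in the statement; this is immediate from the definitions of the two transformations. I do not expect any genuine obstacle here: all the substance lies in Theorem~\ref{thm:ClassIso} (the comparison with algebraic cobordism, which rests on the Hopkins--Morel spectral sequence) and in Corollary~\ref{cor:Injectivity} (the identification of $\sC\sK'_{2n,n}(X)$ with the image of $K_0(\sM_{(n)}(X))$ in $G_0(X)$), both of which are already established.
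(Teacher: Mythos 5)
Your proposal is correct and is exactly the paper's argument: the corollary is deduced by composing the isomorphism $\theta_{\sC\sK}(X)$ from Theorem~\ref{thm:ClassIso} with the map $\gamma_n^X$ of Corollary~\ref{cor:Injectivity}, which is an isomorphism for $n\ge d$ and injective for $n=d-1$. Nothing further is needed.
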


\begin{proof} This follows from theorem~\ref{thm:ClassIso}  and corollary~\ref{cor:Injectivity}.
\end{proof}

\section{Lci pull-backs and fundamental classes} Theorem~\ref{thm:ClassIso} gives us the isomorphism
\[
\theta_{\sC\sK}:\Omega^{CK}_*(X)\to \sC\sK'_{2*,*}
\]
compatible with projective push forward, pull-back by open immersions, external products and 1st Chern class operators. However, $\Omega^{CK}_*$ is an oriented Borel-Moore homology theory, hence has in addition to these structures pull-back maps for arbitrary \lci-morphisms. The isomorphism $\theta_{\sC\sK}$ thus  endows the homology theory $\sC\sK'_{2*,*}$ with  \lci-pull-backs. The theory $X\mapsto \sK'_{2n,n}(X)=G_0(X)$ has \lci-pull-backs as well; we proceed to compare the two. As the \lci-pull-backs in $\Omega_*$ are defined using the classes of simple normal crossing divisors, we first need a result about these classes.

Fix an oriented cohomology theory $A^*$ with formal group law $F=F_A$. As usual, we use the notation $u+_Fv$ for $F(u,v)$ and more generally write $[n_1]_Fu_1+_F\ldots+_F[n_r]_Fu_r$ for the power series $F_{n_1,\ldots, n_r}(u_1,\ldots, u_r)$ that expresses the evident sum operation.
We may write $F_{n_1,\ldots, n_r}(u_1,\ldots, u_r)$ in the following form
\[
F_{n_1,\ldots, n_r}(u_1,\ldots, u_r)=\sum_{I\subset\{1,\ldots, r\}, I\neq\0} G^{n_1,\ldots, n_r}_I(u)\cdot u_I
\]
with the $G^{n_1,\ldots, n_r}_I(u)\in A_*(k)[[u_1,\ldots, u_r]]$ and $u_I=\prod_{i\in I}u_i$ If we assume (which we will) that $G^{n_1,\ldots, n_r}_I(u)$ is not divisible by any $u_j$ with $j\not\in I$, then the expression is unique (for instance,  $G^{n_1,\ldots, n_r}_{\{j\}}(u)=n_j$).

Now let $D=\sum_{i=1}^rn_iD_i$ be an effective simple normal crossing divisor on some $Y\in\Sm/k$, with each $D_i$ smooth and irreducible. As usual, for $I\subset\{1,\ldots, r\}$, let $D_I:=\cap_{i\in I}D_i$ and let $i_I:D_I\to |D|$, $i:|D|\to Y$ be the inclusions. Let $L_j=i^*\sO_Y(D_j)$; for 
$I=\{i_1,\ldots, i_m\}$, let $\tilde{c}_1(L_I)$ stand for the $m$-tuple $(\tilde{c}_1(L_{i_1}), \ldots, \tilde{c}_1(L_{i_m})$.  Suppose $Y$ has dimension $n$. The divisor class of $D$, $[D\to |D|]^A\in A_{n-1}(|D|)$ is defined as
\[
[D\to |D|]^A:=\sum_{I\subset\{1,\ldots, r\}, I\neq\0}i_{I*}G^{n_1,\ldots, n_r}_I(\tilde{c}_1(L_I))(1_{D_I})
\]

\begin{lem}\label{lem:DivClass} For $A_*=G_0[\beta,\beta^{-1}]$, we have 
\[
[D\to |D|]^{G_0}=[\sO_D] 
\]
in $G_0(|D|)$.
\end{lem}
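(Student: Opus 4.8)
The plan is to make the formula defining $[D\to|D|]^{G_0}$ completely explicit and then identify the result with $[\sO_D]$ by a $G$-theoretic devissage.

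First I would compute the coefficients $G_I^{n_1,\ldots,n_r}$ for the formal group law of $G_0[\beta,\beta^{-1}]$, which by remark~\ref{rem:KThyFGL} is the multiplicative law $F(u,v)=u+v-\beta uv$. The key observation is that $1-\beta F(u,v)=(1-\beta u)(1-\beta v)$, so the substitution $u\mapsto 1-\beta u$ converts $F$ into ordinary multiplication; hence
\[
1-\beta F_{n_1,\ldots,n_r}(u_1,\ldots,u_r)=\prod_{i=1}^r(1-\beta u_i)^{n_i}.
\]
Using the telescoping identity $(1-\beta u_i)^{n_i}-1=-\beta u_i\sum_{j=0}^{n_i-1}(1-\beta u_i)^j$ together with the inclusion--exclusion expansion of the product, one reads off
\[
G_I^{n_1,\ldots,n_r}(u)=(-1)^{|I|-1}\beta^{|I|-1}\prod_{i\in I}\Big(\sum_{j=0}^{n_i-1}(1-\beta u_i)^j\Big),
\]
which involves only the variables $u_i$ with $i\in I$, hence meets the normalisation built into the definition of $[D\to|D|]^A$.

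Next I would substitute the $K$-theory Chern classes. Since $c_1^{G_0}(L)=\beta^{-1}(1-[L^{-1}])$ (remark~\ref{rem:KThyFGL}), the operator $\beta\tilde c_1(L)$ equals multiplication by $1-[L^{-1}]$, so $1-\beta\tilde c_1(L)$ is multiplication by $[L^{-1}]$; in particular it is degree-preserving, and on $\sC\sK'_{2d,d}(D_I)$ with $d=\dim D_I$ — which equals $G_0(D_I)$ by corollary~\ref{cor:Injectivity} — it acts as multiplication by $[(i_I^*L)^{-1}]\in K_0(D_I)$. Grouping the powers of $\beta$ in the operator $G_I(\tilde c_1(L_I))$ so that the degree-preserving factor $\prod_{i\in I}\sum_{j=0}^{n_i-1}(1-\beta\tilde c_1(L_i))^j$ acts first on $1_{D_I}=[\sO_{D_I}]$, staying inside $\sC\sK'_{2d,d}(D_I)=G_0(D_I)$, and the residual $\beta^{|I|-1}$ acts afterwards — the latter being the identity on $G_0$ by Bott periodicity, again via corollary~\ref{cor:Injectivity}, since all the degrees occurring are $\ge\dim D_I$ — yields, in $G_0(|D|)$,
\[
[D\to|D|]^{G_0}=\sum_{\emptyset\neq I\subseteq\{1,\ldots,r\}}(-1)^{|I|-1}\,i_{I*}\!\Big(\Big(\prod_{i\in I}\sum_{j=0}^{n_i-1}[\,(i_I^*L_i)^{-j}\,]\Big)\cdot[\sO_{D_I}]\Big).
\]

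It then remains to prove that this expression equals $[\sO_D]$ in $G_0(|D|)$, which is now a statement about coherent sheaves; I would prove it by induction on the pair $(\dim Y,\sum_i(n_i-1))$ ordered lexicographically. When all $n_i=1$ the inner sums are trivial and the claim is exactly the exactness of the \v{C}ech-type resolution $0\to\sO_D\to\bigoplus_i\sO_{D_i}\to\bigoplus_{i<j}\sO_{D_{ij}}\to\cdots\to\sO_{D_{1\cdots r}}\to 0$ of the reduced divisor $D$. For the inductive step, choose $i_0$ with $n_{i_0}\ge2$, set $D'=D-D_{i_0}$, and use the short exact sequence $0\to\sO_Y(-D')|_{D_{i_0}}\to\sO_D\to\sO_{D'}\to 0$, whose kernel is the line bundle $\sO_Y(-D')|_{D_{i_0}}=L_{i_0}^{-(n_{i_0}-1)}|_{D_{i_0}}\otimes\sO_{D_{i_0}}(-E')$ on $D_{i_0}$, where $E'=\sum_{j\neq i_0}n_j(D_j\cap D_{i_0})$ is a simple normal crossing divisor on the smaller-dimensional smooth scheme $D_{i_0}$. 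By the induction hypothesis $[\sO_{D'}]$ is given by the formula for $D'$ on $Y$; and comparing the explicit formulas for $D$ and for $D'$, together with the projection formula (used to peel off $L_{i_0}^{-(n_{i_0}-1)}|_{D_{i_0}}$) and the identity $[\sO_{D_{i_0}}(-E')]=[\sO_{D_{i_0}}]-[\sO_{E'}]$ in $K_0(D_{i_0})$, reduces the remaining equality to the divisor-class formula for $E'$ on $D_{i_0}$ — which is covered by the induction on dimension. The main obstacle is this last step: the signs, the line-bundle twists $[(i_I^*L_i)^{-j}]$, and the various push-forwards $i_{I*}$ must be reconciled carefully when the contribution of $D_{i_0}$ is split off; by contrast, the formal-group-law computation of the first two paragraphs is routine once the multiplicative substitution is used.
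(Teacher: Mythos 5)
Your argument is correct, but it takes a genuinely different route from the paper's. The paper runs a single induction on the total multiplicity $\sum_i n_i$: setting $D'=D-D_r$, an elementary manipulation of the multiplicative formal group law gives the recursion \eqref{eqn:DivGpLaw} for the divisor class, and the matching recursion for $[\sO_D]$ comes from the exact sequence $0\to \sO_{D'}\otimes\sO_Y(-D_r)\to\sO_D\to\sO_{D_r}\to0$ together with $\tilde{c}_1(L)=\beta^{-1}(1-[L^{-1}])\cdot(-)$; no explicit form of the coefficients $G_I$ is ever needed. You instead expand everything: your closed formula $[D\to|D|]^{G_0}=\sum_{I\neq\emptyset}(-1)^{|I|-1}i_{I*}\bigl(\bigl(\prod_{i\in I}\sum_{j=0}^{n_i-1}[(i_I^*L_i)^{-j}]\bigr)\cdot[\sO_{D_I}]\bigr)$ is correct for the (support-based) normalization of the decomposition $F_{n_1,\ldots,n_r}=\sum_I G_Iu_I$ that the definition intends, and the lemma becomes a purely coherent-sheaf identity, proved by a lexicographic induction whose base case is the \v{C}ech-type resolution of a reduced simple normal crossing divisor and whose inductive step uses the complementary exact sequence $0\to\sO_Y(-D')|_{D_{i_0}}\to\sO_D\to\sO_{D'}\to0$ plus an auxiliary induction on dimension for the induced divisor $E'$ on $D_{i_0}$. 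The bookkeeping you flag as the main obstacle does close: in $S(D)-S(D')$ only the terms with $i_0\in I$ survive, each contributing the factor $[L_{i_0}^{-(n_{i_0}-1)}|_{D_I}]$; writing $I=\{i_0\}\sqcup J$, pushing forward to $D_{i_0}$ and extracting $[L_{i_0}^{-(n_{i_0}-1)}|_{D_{i_0}}]$ by the projection formula leaves $[\sO_{D_{i_0}}]$ minus exactly the expression for $E'$ on $D_{i_0}$, which by the dimension induction equals $[\sO_{D_{i_0}}]-[\sO_{E'}]=[\sO_{D_{i_0}}(-E')]$, i.e.\ precisely the kernel class of your sequence. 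What each approach buys: the paper's recursion is shorter and hides the combinatorics, while your expansion produces an explicit formula for the divisor class (of some independent interest) at the cost of the extra induction and the stratum bookkeeping. Two minor points: the detour through $\sC\sK'_{2d,d}$ and corollary~\ref{cor:Injectivity} is unnecessary, since the lemma lives in the theory $G_0[\beta,\beta^{-1}]$ where $\beta$ is already a unit and $\tilde{c}_1(L)$ is multiplication by $\beta^{-1}(1-[L^{-1}])$ by remark~\ref{rem:KThyFGL}; and in the inductive step you should note that the components $D_j\cap D_{i_0}$ of $E'$ are smooth but possibly disconnected, so one either allows smooth non-irreducible components or splits them, which affects nothing.
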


\begin{proof} We write $[D\to |D|]$ for $[D\to |D|]^{G_0}$. and proceed by induction on $\sum_in_i$, where $D=\sum_{i=1}^rn_iD_i$. For $D$ a smooth irreducible divisor, $[D\to |D|]=1_D=[\sO_D]$, which takes care of the case $\sum_in_i=1$.  

For the general case,  let $D'=D-D_r=\sum_{i=1}^{r-1}n_iD_i+(n_r-1)D_r$, and let $j:|D'|\to |D|$, $i_r:D_r\to |D|$, $i: |D|\to Y$ be the inclusions.

The formal group law for $G_0[\beta,\beta^{-1}]$ is given by $F(u,v)=u+v-\beta uv\in\Z[\beta,\beta^{-1}][[u,v]]$. From this, an elementary computation yields
\begin{equation}\label{eqn:DivGpLaw}
[D\to |D|]= j_*[D'\to |D'|]+ i_{r*}[D_r\to|D_r|]-\beta\cdot \tilde{c}_1(k^*\sO_Y(D_r))( j_*[D'\to |D'])).
\end{equation}
Let $\sI_{D}$, $\sI_{D'}$, $\sI_{D_r}$ be the respective ideal sheaves in $\sO_Y$. We have the exact sequence of $\sO_D$-modules
\[
0\to \sI_{D_r}/\sI_{D}\to \sO_D\to \sO_{D_r}\to0
\]
Furthermore, the multiplication map defines an isomorphism of $\sO_{D'}$-modules
\[
\sO_{D'}\otimes_{\sO_Y}\sI_{D_r}\cong  \sI_{D_r}/\sI_{D}.
\]
This gives us the identities in $G_0(D)[\beta,\beta^{-1}]$:
\begin{align*}
[\sO_D]&=\sO_Y(-D_r)\cdot j_*[\sO_{D'}]+i_{r*}[\sO_{D_r}]\\
&= j_*[\sO_{D'}]+i_{r*}[\sO_{D_r}]-(1-\sO_Y(-D_r))\cdot j_*[\sO_{D'}]\\
&= j_*[\sO_{D'}]+i_{r*}[\sO_{D_r}]-\beta\tilde{c}_1(k^*\sO_Y(D_r))(j_*[\sO_{D'}]),
\end{align*}
the last identity following from 
\[
\tilde{c}_1(L)(\alpha)=\beta^{-1}(1-L^{-1})\cdot\alpha
\]
for $L$ a line bundle on $|D|$ and $\alpha\in G_0(|D|)[\beta,\beta^{-1}]$. Combined with \eqref{eqn:DivGpLaw} and our induction hypothesis, this proves the lemma.
\end{proof}

\begin{prop}\label{prop:LciCompat} Let $f:Y\to X$ be an \lci-morphism of relative dimension $d$ in  $\Sch/k$. Then the diagram
\[
\xymatrix{
\Omega^{CK}_n(X)\ar[r]^{\gamma\circ\theta}\ar[d]_{f^*}&G_0(X)\ar[d]^{f^*}\\
\Omega^{CK}_{n+d}(Y)\ar[r]_{\gamma\circ\theta}&G_0(Y)
}
\]
commutes.
\end{prop}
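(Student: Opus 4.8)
The plan is to reduce the statement to the two standard constituents of an \lci\ morphism and to dispatch each using compatibilities of $\gamma\circ\theta_{\sC\sK}$ that are either formal or already in hand. Recall first what must be proved: by Theorem~\ref{thm:ClassIso} the \lci\ pull-backs on $\sC\sK'_{2*,*}$ are, by construction, transported from $\Omega^{CK}_*$ along the isomorphism $\theta_{\sC\sK}$, so the assertion of the proposition is precisely that the natural transformation $\gamma\circ\theta_{\sC\sK}\colon\Omega^{CK}_*\to\sK'_{2*,*}$, with $\sK'_{2n,n}(X)=G_0(X)$, intertwines \lci\ pull-backs. Since every \lci\ morphism $f$ of relative dimension $d$ factors as $Y\xrightarrow{i}P\xrightarrow{p}X$ with $i$ a regular closed immersion and $p$ smooth, and $f^{*}=i^{*}\circ p^{*}$, it suffices to treat the two cases $f$ smooth and $f$ a regular closed immersion.

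For $f=p$ smooth I would argue directly on generators. By Remark~\ref{rem:Char}, $\Omega_*(X)$ is generated by the classes $g_*[W]_\Omega$ with $g\colon W\to X$ projective and $W\in\Sm/k$, so $\Omega^{CK}_*(X)$ is generated over $\sC\sK'_{2*,*}(k)=\Z[\beta]$ by their images; moreover $\theta_{\sC\sK}(g\colon W\to X)=g_*([W]_{\sC\sK',\sC\sK})$, and $\gamma$ sends the $\sC\sK'$-fundamental class of the smooth $W$ to $\alpha_W^{-1}(1^{\sK}_W)=[\sO_W]$, so $\gamma\circ\theta_{\sC\sK}$ takes $g_*[W]_\Omega$ to $g_*[\sO_W]\in G_0(X)$. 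The base-change identity $p^{*}g_*=g'_*p'^{*}$ for smooth pull-back and projective push-forward (valid in $\Omega^{CK}_*$ and, as flat base change, in $G$-theory), together with $p'^{*}[W]_\Omega=[W\times_XP]_\Omega$ and $p'^{*}[\sO_W]=[\sO_{W\times_XP}]$ for the smooth scheme $W\times_XP$, then forces the relevant square to commute on generators; as $\gamma\circ\theta_{\sC\sK}$ and both pull-backs are $\Z[\beta]$-linear, this settles the smooth case.

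The regular-embedding case is where I expect the real work. Here I would use the Levine--Morel construction of $i^{*}$ on $\Omega_*$ via the deformation to the normal cone, which presents $i^{*}$ as the composite of a specialization map attached to the special fibre of the deformation space---a Cartier divisor---with the inverse of the smooth pull-back along the zero section of the normal bundle. The zero-section contribution is covered by the smooth case. The specialization of a cobordism cycle $(g\colon W\to P)$ with $W$ smooth is computed, after lifting $g$ over the deformation space and, by the moving-type lemmas of \cite{LevineMorel}, replacing it by one whose intersection with the special fibre is a simple normal crossing divisor $D$, through the formal-group-law divisor class $[D\to|D|]$ built from the operators $\tilde{c}_1$ of the components of $D$. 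Applying $\gamma\circ\theta_{\sC\sK}$ and using its compatibility with first Chern class operators (Corollary~\ref{cor:NatTrans}), the comparison reduces to the assertion that for the multiplicative formal group law this divisor class maps to $[\sO_D]\in G_0$---which is exactly Lemma~\ref{lem:DivClass}---and hence to the $G$-theory derived restriction of $[\sO_W]$ to $D$, which is what the $G$-theory pull-back along $i$ produces. Matching the two constructions of $i^{*}$ term by term and then running the ensuing diagram chase is the main obstacle; once it is in place the proposition follows.

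As a cross-check and an alternative that avoids the explicit deformation bookkeeping, one can instead invoke universality. The theory $\Omega^{CK}_*=\Omega_*\otimes_{\L_*}\Z[\beta]$ is the universal oriented Borel--Moore homology theory with formal group law $(u+v-\beta uv,\Z[\beta])$, and by \cite{Dai} the theory $\Omega_*\otimes_\L\Z[\beta,\beta^{-1}]\cong G_0[\beta,\beta^{-1}]$ is an oriented Borel--Moore homology theory with the multiplicative formal group law; hence the unique classifying morphism $\Omega^{CK}_*\to G_0[\beta,\beta^{-1}]$ is, as a morphism of oriented Borel--Moore homology theories, automatically compatible with \lci\ pull-backs. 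The generator computation of the smooth case identifies this classifying morphism, in weight $n$, with $\gamma\circ\theta_{\sC\sK}$, so the proposition follows from the \lci-compatibility of Dai's isomorphism.
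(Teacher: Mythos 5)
Your reduction to the smooth and regular-embedding cases, and your treatment of the smooth case via generators $g_*[W]_\Omega\mapsto Rg_*[\sO_W]$ and flat base change, coincide with the paper's proof. For the regular embedding, however, your primary plan --- unwinding the Levine--Morel deformation-to-the-normal-cone construction of $i^*$ and matching it term by term with the derived restriction in $G$-theory --- is left unexecuted at exactly its hard point (you say yourself that the matching ``is the main obstacle''), so on its own it is not a proof. The paper takes a different, more elementary route: using quasi-projectivity it factors a regular embedding into codimension-one regular embeddings, and for a Cartier divisor $Y\subset X$ it uses that $\Omega^{CK}_*(X)$ is generated by cobordism cycles $g:Z\to X$ of three kinds ($g(Z)\subset Y$; $g^{-1}(Y)$ a simple normal crossing divisor on $Z$; $g^{-1}(Y)=\emptyset$), handling the first by the identity $f^*[f_*\sF]=(1-L^{-1})\cdot[\sF]$ and the second by Tor-independent base change together with Lemma~\ref{lem:DivClass}; no deformation space and no re-examination of the construction of $i^*$ is needed.

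That said, your closing universality argument is a genuine alternative that does close the gap. Since $G_0[\beta,\beta^{-1}]$, with its classical push-forwards and derived pull-backs for \lci morphisms, is an oriented Borel--Moore homology theory (\cite{LevineMorel}, \cite{Dai}), the canonical morphism $\Omega_*\to G_0[\beta,\beta^{-1}]$ respects \lci pull-backs by universality; it factors through the surjection $\Omega_*\to\Omega_*^{CK}$ because the classifying map $\L_*\to\Z[\beta,\beta^{-1}]$ factors through $\Z[\beta]$; and in each weight it agrees with $\gamma\circ\theta_{\sC\sK}$ because both send $\beta^m\cdot[g:Z\to X]$ to $Rg_*[\sO_Z]$ (remark~\ref{rem:Char}, plus the identification of the duality-theoretic push-forward with $Rg_*$ and of the $\beta$-action with the identity on $G_0$, which the paper also takes as given). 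What this buys is brevity and no diagram chase; what it costs is invoking the full verification that $G_0[\beta,\beta^{-1}]$ satisfies all the axioms of an oriented Borel--Moore homology theory, which is essentially the content the paper re-proves by hand in the two special cases it needs. If you intend the proposal to stand as a proof, it should lean on this universality argument (with the identification on generators spelled out) rather than on the unfinished deformation argument.
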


\begin{proof} It suffices to consider two cases: (a) $f$ a regular embedding, (b) $f$ a smooth morphism.

We first note that $\Omega^{CK}_n(X)$ is generated by the classes $[g:Z\to X]$ with $Z\in\Sm/k$ irreducible of dimension $n$ over $k$, and $g$ a projective morphism; here $[g:Z\to X]$ stands for the element $g_*(1_Z)$, with $1_Z$ the unit in the graded ring $\Omega^{CK}_*(Z)$. The image of $[g:Z\to X]$ is given by the same formula, that is 
\[
(\gamma\circ\theta)([g:Z\to X])=Rg_*(\sO_Z)
\]
where $Rg_*:G_0(Z)\to G_0(X)$ is the usual projective push-forward map on $G_0$: $Rg_*([\sF]):=\sum_i(-1)^i[R^ig_*\sF]$. We now compute in cases (a) and (b):\\
\\
{\em Case (b):} $f:Y\to X$ smooth. Then $f^*[g:Z\to X]=[p_2:Z\times_XY\to Y]$ and 
\begin{align*}
Rp_{2*}[\sO_{Z\times_XY}]&=Rp_{2*}(p_1^*[\sO_Z])\\
&=f^*(Rg_*[\sO_Z])
\end{align*}
the last identity being the base-change isomorphism for the flat morphism $f$. \\
\\
{\em Case (a):} $f:Y\to X$ a regular embedding, say of codimension $c$. Since $X$ is quasi-projective, we can factor $f$ through a sequence of regular codimension one embeddings
\[
Y=X_c\hookrightarrow X_{c-1}\hookrightarrow\ldots\hookrightarrow X_1\hookrightarrow X;
\]
this reduces us to the case $c=1$, that is, $f:Y\to X$ is a Cartier divisor on $X$. In this case, $\Omega^{CK}_*(X)$ is generated by three types of elements (with $g, Z$ as before):
\begin{enumerate}
\item $g:Z\to X$ with $g(Z)\subset Y$.
\item $g:Z\to X$ such that $g^{-1}(Y)$ is a simple normal crossing divisor on $Z$
\item $g^{-1}(Y)=\0$.
\end{enumerate}
In case (3), $f^*([g:Z\to X])=0$ by definition, and clearly $f^*(Rg_*(\sO_Z))$ is zero as well. In case (1), let $L=f^*(\sO_X(Y))$ and let $\bar{g}:Z\to Y$ be the unique map with $f\circ\bar{g}=g$. Then by definition
\[
f^*([g:Z\to X]):=\tilde{c}_1(L)([\bar{g}:Z\to Y]).
\]
On the $G$-theory side, let $\sF$ be a coherent sheaf on $Y$. Then $Rf_*[\sF]=f_*\sF$, since $f$ is finite, and $f^*([f_*\sF]):=[\sF]-[\Tor^{\sO_X}_1(f_*\sF,\sO_Y)]$. Via the exact sequence
\[
0\to \sO_X(-Y)\to \sO_X\to \sO_Y\to0
\]
we see that 
\[
[\sF]-[\Tor^{\sO_X}_1(f_*\sF,\sO_Y)]=(1-L^{-1})\cdot[\sF]
\]
in $G_0(Y)$, so $f^*([f_*\sF])=(1-L^{-1})\cdot[\sF]$. Applying this to the coherent sheaves $R^i\bar{g}_*\sO_Z$ and noting that $\tilde{c}_1^{G_0}(L)$ is multiplication by $(1-L^{-1})$ shows that 
\[
(\gamma\circ\theta)(f^*[g:Z\to X])=f^*(Rg_*(\sO_Z))
\]
as desired. 

Finally, in case (2), the diagram
\[
\xymatrix{
g^{-1}(Y)\ar[r]^{\bar{f}}\ar[d]_{\bar{g}}&Z\ar[d]^g\\
Y\ar[r]_f&X}
\]
is cartesian and Tor-independent, hence we have the base-change identity
\[
f^*Rg_*[\sO_Z]=R\bar{g}_*(\bar{f}^*[\sO_Z])=R\bar{g}_*([\sO_{g^{-1}(Y)}]).
\]
Thus it suffices to show that 
\begin{equation}\label{eqn:DivId}
[\sO_{g^{-1}(Y)}]=(\gamma\circ\theta)(\bar{f}^*(1_Z))
\end{equation}
in $G_0(g^{-1}(Y))$. But by definition, we have $\bar{f}^*(1_Z)=[g^{-1}(Y)\to |g^{-1}(Y)|]$, so the identity \eqref{eqn:DivId} follows from lemma~\ref{lem:DivClass}.
\end{proof}

\begin{definition} Let $X\in\Sch/k$ have pure dimension $d$. The {\em fundamental class} $[X]_{CK}\in \Omega^{CK}_d(X)$ is the element corresponding to $[\sO_X]\in G_0(X)$ under the isomorphism $\Omega_d^{CK}(X)\to G_0(X)$.
\end{definition}

\begin{thm} Let $f:Y\to X$ be an \lci-morphism in $\Sch/k$. Assume that $X$ has pure dimension $d_X$ and $Y$ has pure dimension $d_Y$. then $f^*([X]_{CK})=[Y]_{CK}$ in $\Omega_{d_Y}^{CK}(Y)$.
\end{thm}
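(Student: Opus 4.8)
The plan is to read off both fundamental classes through the isomorphism $\gamma\circ\theta$ to $G$-theory, transport the \lci pull-back using Proposition~\ref{prop:LciCompat}, and then observe that the \lci pull-back on $G_0$ fixes the class of the structure sheaf. Write $d$ for the relative dimension of $f$, so $d=d_Y-d_X$ and $f^*$ carries $\Omega^{CK}_{d_X}(X)$ to $\Omega^{CK}_{d_X+d}(Y)=\Omega^{CK}_{d_Y}(Y)$. Since $X$ has pure dimension $d_X$ we have $d_X=\dim_kX$, so by Corollary~\ref{cor:GThyIso} the composite $\gamma^X_{d_X}\circ\theta_{\sC\sK}(X)\colon\Omega^{CK}_{d_X}(X)\to G_0(X)$ is an isomorphism, and by the very definition of $[X]_{CK}$ it sends $[X]_{CK}$ to $[\sO_X]$; likewise $\gamma^Y_{d_Y}\circ\theta_{\sC\sK}(Y)\colon\Omega^{CK}_{d_Y}(Y)\to G_0(Y)$ is an isomorphism sending $[Y]_{CK}$ to $[\sO_Y]$.

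Next I would apply Proposition~\ref{prop:LciCompat} in degree $n=d_X$: the square there commutes, so
\[
(\gamma\circ\theta)\bigl(f^*([X]_{CK})\bigr)=f^*\bigl((\gamma\circ\theta)([X]_{CK})\bigr)=f^*([\sO_X]),
\]
where the $f^*$ on the right is the \lci pull-back on $G$-theory. It then remains to identify $f^*([\sO_X])$ with $[\sO_Y]$ in $G_0(Y)$. Here I would argue exactly as in the proof of Proposition~\ref{prop:LciCompat}: using quasi-projectivity of $Y$, factor $f$ through a regular embedding $Y\hookrightarrow X\times\P^N$ (regular because $f$ is \lci and the projection is smooth) followed by the smooth projection to $X$, which reduces us to the two cases of a smooth morphism and a regular embedding. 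For $f$ smooth, $f^*[\sO_X]=[f^*\sO_X]=[\sO_Y]$ by flatness; for $f$ a regular embedding one reduces to the codimension-one (Cartier divisor) case, where $\sO_X$ is locally free so that $f^*\sO_X=\sO_Y$ with no higher Tor, hence $f^*[\sO_X]=[\sO_Y]$, and then iterates over a factorization into codimension-one embeddings. Equivalently, $Lf^*\sO_X=\sO_Y$ for any \lci $f$, so the \lci pull-back on $G_0$ fixes the class of the structure sheaf.

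Putting these together gives $(\gamma\circ\theta)(f^*[X]_{CK})=[\sO_Y]=(\gamma\circ\theta)([Y]_{CK})$; since $\gamma^Y_{d_Y}\circ\theta_{\sC\sK}(Y)$ is injective in degree $d_Y=\dim_kY$, we conclude $f^*([X]_{CK})=[Y]_{CK}$ in $\Omega^{CK}_{d_Y}(Y)$.

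The only genuine content, and hence the step to be careful with, is the identification $f^*([\sO_X])=[\sO_Y]$ on the $G$-theory side, i.e.\ checking that the right-hand vertical map in Proposition~\ref{prop:LciCompat} really is the \lci pull-back and that it sends the structure sheaf to the structure sheaf. This is essentially already contained in the case analysis in the proof of that proposition (the computations $f^*[f_*\sF]=(1-L^{-1})\cdot[\sF]$ for a divisor and base change for smooth maps), so no new difficulty arises; everything else is a formal diagram chase using that $\gamma\circ\theta$ is an isomorphism in the top degree.
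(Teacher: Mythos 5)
Your proposal is correct and follows essentially the same route as the paper's own proof: identify $[X]_{CK}$ and $[Y]_{CK}$ with $[\sO_X]$ and $[\sO_Y]$ via the isomorphism of corollary~\ref{cor:GThyIso}, use proposition~\ref{prop:LciCompat} to transport the \lci pull-back to $G_0$, and check $f^*[\sO_X]=[\sO_Y]$ separately for smooth morphisms and regular embeddings. Your extra care about the $G$-theory pull-back fixing the structure sheaf is exactly the content the paper states tersely, so nothing is missing.
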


\begin{proof} If $f$ is smooth, then $f^*[\sO_X]=[\sO_Y]$ in $G_0(Y)$. If $f$ is a regular embedding, then similarly $f^*[\sO_X]=[\sO_Y]$ in $G_0(Y)$. Thus, for an arbitrary \lci-morphism $f$, $f^*[\sO_X]=[\sO_Y]$ in $G_0(Y)$. The theorem thus follows from the definition of the fundamental class, corollary~\ref{cor:GThyIso}  and  proposition~\ref{prop:LciCompat}.
\end{proof} 

Let $\Omega_*^\times:=\Omega_*\otimes_\L\Z[\beta,\beta^{-1}]$ and $\Omega_*^+:=\Omega_*\otimes_\L\Z$, the first tensor product defined via the classifying map for the formal group law $(u+v-\beta uv,
Z[\beta, \beta^{-1}]$, the second for the additive formal group law $(u+v, \Z)$.  Dai's theorem \cite{Dai} states that the classifying map $\Omega_*^\times\to G_0[\beta,\beta^{-1}]$ is an isomorphism, while it is shown in \cite[theorem 4.5.1]{LevineMorel} that the classifying map $\Omega_*^+\to \CH_*$ is an isomorphism. 

In particular,  inverting $\beta$ induces the map
\[
\vartheta_\times:\Omega_*^{CK}\to \Omega_*^\times,
\]
which after the isomorphisms $\Omega_*^\times\cong G_0[\beta,\beta^{-1}]\cong \sK'_{2*,*}$ and 
$\Omega_*^{CK}\cong \sC\sK'_{2*,*}$ is just the canonical map $\rho: \sC\sK'_{2*,*}\to  \sK'_{2*,*}$. Similarly, taking the quotient by $\beta$ defines the map
\[
\vartheta_+:\Omega_*^{CK}\to \Omega_*^+
\]
which is the same as the classifying map for the additive theory $\CH_*$. 

Both $G_0[\beta,\beta^{-1}]$ and $\CH_*$ admit fundamental classes for equi-dimensional finite type $k$-schemes, functorial with respect to pull-back by \lci morphisms. For $X\in\Sch/k$ of dimension $d$ over $k$, the fundamental class $[X]_G\in G_0(X)[\beta,\beta^{-1}]_d$ is $\beta^d[\sO_X]$, where $[\sO_X]\in G_0(X)$ is the class of the structure sheaf. For $\CH_*$, $[X]_\CH$ is the cycle class of the scheme $X$; concretely, this is $\sum_{i=1}^rn_i[X_i]$ where $X$ has irreducible components $X_1,\ldots, X_r$ and $n_i$ is the length of the local ring $\sO_{X_i,\eta_i}$ at the generic point $\eta_i\in X_i$.

\begin{prop} Let $X$ be an equi-dimensional finite type $k$-scheme. Then after canonical identifications $\Omega_*^\times\cong G_0[\beta, \beta^{-1}]$, $\Omega_*^+\cong \CH_*$, we have
\[
\vartheta_\times([X]_{CK})=[X]_G;\quad \vartheta_+([X]_{CK})=[X]_{CH}.
\]
\end{prop}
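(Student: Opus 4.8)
The plan is to prove the two identities separately. The first, $\vartheta_\times([X]_{CK})=[X]_G$, is essentially a matter of unwinding definitions. Recall from the discussion preceding the statement that, under the isomorphisms $\Omega^{CK}_*\cong\sC\sK'_{2*,*}$ of Theorem~\ref{thm:ClassIso} and $\Omega^\times_*\cong G_0[\beta,\beta^{-1}]=\sK'_{2*,*}$, the map $\vartheta_\times$ becomes the canonical transformation $\rho\colon\sC\sK'_{2*,*}\to\sK'_{2*,*}$. In degree $d=\dim_kX$ the map $\rho$ is exactly $\gamma^X_d\colon\sC\sK'_{2d,d}(X)\to\sK'_{2d,d}(X)=G_0(X)$, which is an isomorphism by Corollary~\ref{cor:Injectivity} since $d\ge\dim_kX$. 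Because $[X]_{CK}$ was defined as $(\gamma^X_d\circ\theta_{\sC\sK}(X))^{-1}([\sO_X])$, we obtain $\vartheta_\times([X]_{CK})=\rho(\theta_{\sC\sK}([X]_{CK}))=[\sO_X]\in\sK'_{2d,d}(X)$, and this element is precisely the degree $d$ piece $\beta^d[\sO_X]$ of $G_0(X)[\beta,\beta^{-1}]$, that is, $[X]_G$.

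For the second identity, $\vartheta_+([X]_{CK})=[X]_{CH}$, the plan is to reduce to the generic points of $X$. Replacing $X$ by $X_{\mathrm{red}}$ (which changes neither side), $X$ is reduced of pure dimension $d$, so $\CH_d(X)=Z_d(X)$ is free on the irreducible components and the restriction map $\CH_d(X)\to\bigoplus_{\eta\in X_{(d)}}\CH_d(k(\eta))=\bigoplus_\eta\Z$ is an isomorphism; moreover $\vartheta_+$, being induced by the base change $\Z[\beta]\to\Z$ on coefficients, commutes with restriction to opens and hence with restriction to the $\eta$. Thus it suffices to check the identity after restricting to each $\eta$. On the $\CH$ side, $[X]_{CH}|_\eta=n_\eta[\eta]$ with $n_\eta=\mathrm{length}_{\sO_{X,\eta}}(\sO_{X,\eta})$ the multiplicity. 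On the $\Omega^{CK}$ side, the defining property $\gamma^X_d\circ\theta_{\sC\sK}([X]_{CK})=[\sO_X]$ restricts at $\eta$ to $[\sO_{X,\eta}]=n_\eta[k(\eta)]=n_\eta\in K_0(k(\eta))=\sC\sK'_{2d,d}(k(\eta))$; since (as in the proof of Theorem~\ref{thm:ClassIso}, using \cite[corollary 4.4.3, theorem 1.2.7]{LevineMorel}) $\Omega^{CK}_d(k(\eta))\cong\Z$ is generated by the fundamental class $p_\eta^*(1_k)$, with $p_\eta\colon\Spec k(\eta)\to\Spec k$ the structure morphism, and $\theta_{\sC\sK}$ carries this fundamental class to the fundamental class $[\sO_{k(\eta)}]=1$, we conclude $[X]_{CK}|_\eta=n_\eta\,p_\eta^*(1_k)$. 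Applying $\vartheta_+$, and using that it commutes with the (pro-)smooth pull-back $p_\eta^*$ and matches $1_k$ with $1_k$, gives $\vartheta_+([X]_{CK})|_\eta=n_\eta\,p_\eta^*(1_k)=n_\eta[\eta]$ in $\CH_d(k(\eta))$, which agrees with $[X]_{CH}|_\eta$.

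I expect no serious conceptual obstacle; the work is in bookkeeping. The point that must be handled with care is checking that the two identifications of $\Omega^\times_*$ appearing in the statement (with $G_0[\beta,\beta^{-1}]$ via Dai's theorem \cite{Dai}) and in the set-up of $\vartheta_\times$ (with $\sK'_{2*,*}$) genuinely coincide, so that ``inverting $\beta$'' is $\rho$ and so that $[\sO_X]\in\sK'_{2d,d}(X)$ sits in the $\beta^d$-graded slot; this follows from the uniqueness of the classifying map out of the universal theory $\Omega_*$ (\cite[theorem 1.2.7]{LevineMorel}) together with Quillen's localization identification $\sK'_{2*,*}=G_0[\beta,\beta^{-1}]$. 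A minor secondary point, requiring no real argument, is the equality $[\sO_{X,\eta}]=n_\eta[k(\eta)]$ for the Artin local ring $\sO_{X,\eta}$, which is exactly the definition of the length multiplicity entering $[X]_{CH}$.
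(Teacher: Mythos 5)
Your proof is correct and follows essentially the same route as the paper's: the $G$-theory identity is the same unwinding of the definition of $[X]_{CK}$, and the Chow identity is likewise proved by reducing to the generic points of $X$ and comparing the multiplicity $n_\eta$ coming from $[\sO_{X,\eta}]=n_\eta[k(\eta)]$ with the compatibility of the classifying maps with fundamental classes on smooth schemes (the paper stops at a smooth open neighborhood of $\eta$ where $[\sO_X]=n[\sO_{X_{\red}}]$ instead of passing to the stalk, a cosmetic difference). One caveat: the parenthetical ``replacing $X$ by $X_{\red}$ changes neither side'' should be read only as identifying the ambient groups $\Omega^{CK}_*(X)\cong\Omega^{CK}_*(X_{\red})$ and $\CH_*(X)\cong\CH_*(X_{\red})$ --- the classes $[X]_{CK}$ and $[X]_{CH}$ genuinely differ from those of $X_{\red}$ by the multiplicities, which your subsequent computation correctly retains through $[\sO_{X,\eta}]$ and $n_\eta$.
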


\begin{proof}  Suppose $X$ has dimension $d$ over $k$. For $G$-theory, it follows by the definition of $[X]_{CK}$ that the corresponding element $[X]_{CK}\in \sC\sK'_{2d,d}(X)$ maps to $[\sO_X]\in \sK'_{2d,d}(X)=G_0(X)$ under the canonical map $\rho: \sC\sK'\to  \sK'$. This proves the result for $G$-theory.

For $\CH_*$, the fundamental class $[X]_\CH$ is determined by its restriction to each generic point of $X$, so we may assume that $X$ is irreducible and $X_\red$ is smooth over $k$. Write $[X]_\CH=n[X_\red]_\CH$. Shrinking $X$ if necessary, we may assume that $[\sO_X]=n[\sO_{X_\red}]$ in $G_0(X)$, and thus $[X]_{CK}=n[X_\red]_{CK}$ in $\Omega_d^{CK}(X)=\Omega_d^{CK}(X_\red)$. This reduces us to the case of $X\in \Sm/k$. But then we already have the fundamental class of $X$ in any oriented cohomology theory $A$, given by 
\[
[X]_A:=p^*(1)\in A^0(X)
\]
where $p:X\to\Spec k$ is the structure morphism and $1\in A^0(k)$ is the unit. Since every natural transformation of oriented cohomology theories preserves the unit and is compatible with pull-back for arbitrary morphisms in $\Sm/k$, we have $\vartheta_+([X]_{CK})=[X]_{CH}$, as desired.
\end{proof}

\end{document}